\documentclass[12pt]{article}
\usepackage{amsthm}
\usepackage{amsmath}
\usepackage{amssymb}
\usepackage[affil-it]{authblk}
\begin{document}

\newtheorem{defn}{Definition}
\newtheorem{lemma}{Lemma}
\newtheorem{proposition}{Proposition}
\newtheorem{corollary}{Corollary}
\newtheorem{theorem}{Theorem}
\newtheorem{question}{Question}

\def\seq#1#2{\langle\,#1_{#2}\mid#2\in\omega\,\rangle}
\def\seqn#1#2#3{\langle\,#1\mid#2\in#3\,\rangle}
\def\corn{\mathop{\rm cor}}
\def\so{\mathop{\rm so}}
\def\hgt#1#2{{\rm ht}_{#1}(#2)}
\def\hght#1{{\rm ht}(#1)}
\def\St{{\cal S}}
\def\uG{{\bf 1}_G}
\def\set#1{\sset#1.}
\def\sset#1|#2.{\{\,#1\mid#2\,\}}
\def\nwd{\mathop{\sf nwd}}
\let\rst\upharpoonright
\def\Int{\mathop{\rm Int}}
\let\force\Vdash
\author{Alexander Shibakov \thanks{Electronic address: \texttt{ashibakov@tntech.edu}}}
\affil{Tennessee Technological University}
\title{No interesting sequential groups}
\maketitle
\begin{abstract}
\noindent We prove that it is consistent with ZFC that no
  sequential topological groups of intermediate sequential
  orders exist. This shows that the answer to a 1981 question of P.~Nyikos
  is independent of the standard axioms of set theory. The model
  constructed also provides consistent answers to several
  questions of D.~Shakhmatov, S.~Todor\v cevi\'c and Uzc\'ategui. In
  particular, we show that it is consistent with ZFC that every
  countably compact sequential group is Fr\'echet-Urysohn.
\end{abstract}

\section{Introduction}
A number of areas in mathematics benefit from viewing continuity
through the lens of convergence. To investigate the effects of convergence
on topological structure several classes of spaces have been
introduced and studied by set-theoretic topologists. These range from
various generalized metric spaces to sequential ones. As a result of
these efforts a vast body of classification results and metrization
theorems have been developed. 

A popular theme has been the study of convergence in the presence of an
algebraic structure such as a topological group (see \cite{arhtkac}
and \cite{sha} for a bibliography). One of the first results of this
kind, the classical metrization theorem by Birkhoff and Kakutani states that every first
countable Hausdorff topological group is metrizable. This establishes a
rather unexpected connection between the local and the global properties
generally unrelated to each other.

It has been demonstrated by a number of authors, however, that various
shades of convergence are, in general, different from each other, even
when an algebraic structure is involved (see \cite{arh}, \cite{nyikos},
\cite{sha1}, \cite{shi}). A common thread among the majority of these results
is the necessity of set-theoretic assumptions beyond ZFC to construct
counterexamples.

A celebrated solution of Malykhin's problem about the metrizability of
countable Fr\'echet groups by Hru\v s\'ak and Ramos-Gars\'ia \cite{hrusak} is a
beautiful validation of the significance of set-theoretic tools in the
study of convergence.

A question that is only slightly more recent than Malykhin's problem
was asked by P.~Nyikos in \cite{nyikos} and deals with the {\em
  sequential order} in topological groups. Recall that a space $X$ is
{\em sequential} if whenever $A\subseteq X$ is not closed, there
exists a convergent sequence $C\subseteq A$ that converges to a point
outside $A$. This is a rather indirect way of saying that convergent
sequences determine the topology of $X$ without supplying any
`constructive means' of describing the closure operator. Such a
description is provided by the concept of the {\em sequential closure}
of $A$: $[A]' = \text{``limits of all convergent sequences in
  $A$''}$. The next natural step is to recursively define {\em
  iterated} sequential closures $[A]_\alpha$, $\alpha\leq\omega_1$ as
$[A]_{\alpha+1} = [[A]_\alpha]'$ and $[A]_\alpha =
\bigcup\set{A_\beta|\beta < \alpha}$ for limit $\alpha$. It is
a quick observation that in sequential spaces $[A]_{\omega_1} = \overline{A}$
and in fact, this property characterizes the class of sequential spaces.

For many spaces it takes only countably many iterations to get the
closure of any set. The smallest ordinal $\alpha\leq\omega_1$ such
that $[A]_\alpha = \overline{A}$ for every $A\subseteq X$ is called
{\em the sequential order} of $X$ which is written $\alpha = \so(X)$. As a
simple illustration of these concepts, sequential spaces are those for
which the sequential order is defined and Fr\'echet (or
Fr\'echet-Urysohn) ones are those whose sequential order is $1$.

Simple examples of spaces of {\em intermediate} (i.e.~strictly between $1$ and
$\omega_1$) sequential orders are plentiful but they all seem to have
one common feature: different points of the space have different
properties in terms of the sequential closure. This led P.~Nyikos to
ask the following question.

\begin{question}[\cite{nyikos}]Do there exist topological groups of
  intermediate sequential orders?
\end{question}

This question and some of its stronger versions were also asked by
D.~Shakhmatov in \cite{sha} (Questions 7.4 (i)--(iii)).

A weak version of this question (for homogeneous and {\em
  semi}$\,$topo\-logi\-cal groups, i.e.~groups in which the multiplication is
continuous in each factor separately) had been answered affirmatively in
ZFC (see \cite{dpierone} and \cite{pierone}).

A consistent positive answer for topological groups was first given
in \cite{sh} using CH. In \cite{shi} it was shown that under CH groups
of every sequential order exist.

In this paper we use some of the techniques developed by Hru\v s\'ak and
Ramos-Garc\'ia for their solution of Malykhin's problem to show that
extra set-theoretic assumptions are necessary. To be more precise, we
construct a model of ZFC in which all sequential groups are either
Fr\'echet or have sequential order $\omega_1$. For countable groups,
the result can be viewed as a consistent metrization statement: it is
consistent with the axioms of ZFC that all countable sequential groups
of sequential order less than $\omega_1$ are metrizable.

As an aside, we show how the same model provides a consistent answer
to a question of D.~Shakhmatov about the structure of countably
compact sequential groups.

\section{Preliminaries}
We use standard set-theoretic terminology, see \cite{kunen}. By a slight
abuse of notation we sometimes treat sequences as sets of points in
their range. Basic facts about topological groups can be found in
\cite{arhtkac}. All spaces are assumed to be regular.

Following \cite{hrusak} define {\em Laver-Mathias-Prikry forcing
  ${\Bbb L}_{\cal F}$} associated to a free filter ${\cal F}$ on
$\omega$ as the set of those trees $T\in\omega^{<\omega}$ for which
there is an $s_T\in T$ (the {\em stem of $T$}) such that for all $s\in
T$, $s\subseteq s_T$ or $s_T\subseteq s$ and such that for all $s\in
T$ with $s\supseteq s_T$ the set $\mathop{\rm succ}_T(s) =
\set{n\in\omega|s^\frown n\in T}\in{\cal F}$ ordered by inclusion.

Full details of proofs of various properties of ${\Bbb L}_{\cal F}$
can be found in \cite{hrusak}, here we only present the statements
directly used in the arguments in this paper.

A central role in the techniques of \cite{hrusak} is played by the
concept of an {\em $\omega$-hitting family}. Recall that a family
${\cal H\subseteq[\omega]^\omega}$ is called $\omega$-hitting
\cite{dow} if given $\seq An\subset[\omega]^\omega$ there is an
$H\in{\cal H}$ such that $H\cap A_n$ is infinite for all
$n\in\omega$. 

The following two statements from \cite{brendle} supply all the
necessary information to establish the preservation of
$\omega$-hitting families by ccc forcings and their iterations.

\begin{lemma}[\cite{brendle}]\label{fsohp}Finite support iterations of ccc forcings
  strongly preserving $\omega$-hitting strongly preserve
  $\omega$-hitting.
\end{lemma}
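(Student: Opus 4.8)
The plan is to argue by induction on the length $\delta$ of the finite support iteration $\seqn{{\Bbb P}_\alpha,\dot{\Bbb Q}_\alpha}{\alpha}{\delta}$, following the three-case scheme (successor; limit of uncountable cofinality; limit of countable cofinality) that underlies Brendle's preservation machinery in \cite{brendle}. The soft fact I would invoke repeatedly is absoluteness: whenever $H$ lies in the ground model and $B$ is named by a ${\Bbb P}_\gamma$-name for an infinite subset of $\omega$, the assertion ``$H\cap B$ is infinite'' has the same truth value in $V[G_\gamma]$ and in $V[G_\delta]$, so any witness (a ground-model sequence of infinite sets, together with a condition) extracted at an intermediate stage remains a witness in the final extension. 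I would also note at the outset that a finite support iteration of ccc forcings is ccc, so that the usual reflection of names is available.

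The successor step $\delta=\gamma+1$, where ${\Bbb P}_\delta\cong{\Bbb P}_\gamma*\dot{\Bbb Q}_\gamma$, is a two-step composition argument. Given a condition $(p,\dot q)$ and countably many ${\Bbb P}_\delta$-names $\seq{\dot A}{n}$ for infinite subsets of $\omega$, I would read them as $\dot{\Bbb Q}_\gamma$-names over $V^{{\Bbb P}_\gamma}$ and apply strong preservation of $\dot{\Bbb Q}_\gamma$ inside $V^{{\Bbb P}_\gamma}$ to obtain a name for an extension $\dot q'\le\dot q$ and a name for a witnessing sequence $\seq{\dot C}{n}$ of infinite sets in $V^{{\Bbb P}_\gamma}$. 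That sequence is itself coded by a ${\Bbb P}_\gamma$-name, so I would then apply strong preservation of ${\Bbb P}_\gamma$ (inductive hypothesis) to $p$ and $\seq{\dot C}{n}$ to push the witness all the way down into $V$. Chaining the two conditional implications, and using absoluteness so that the implication proved over $V^{{\Bbb P}_\gamma}$ persists, yields strong preservation for ${\Bbb P}_\delta$; the step is essentially bookkeeping once the definition is unwound.

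For a limit $\delta$ of uncountable cofinality I would reflect everything below a single proper initial segment: a condition in ${\Bbb P}_\delta$ has finite, hence bounded, support, and by ccc each of the countably many names $\dot A_n$ is equivalent to a ${\Bbb P}_{\gamma_n}$-name for some $\gamma_n<\delta$; since ${\rm cf}(\delta)>\omega$ there is one $\gamma<\delta$ above all the $\gamma_n$ and above the support of the given condition. Applying the inductive hypothesis at ${\Bbb P}_\gamma$ and transporting the resulting witness up to ${\Bbb P}_\delta$ by absoluteness disposes of this case.

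The limit $\delta$ of countable cofinality is the heart of the matter and the step I expect to fight with. Reflection now fails --- the iteration may add a Cohen real over $V[G_{\delta_n}]$ for every $n$, so a ${\Bbb P}_\delta$-name for an infinite set need not have an infinite subset named at any proper stage --- and, since finite support iterations are not $\sigma$-closed, one cannot fuse a decreasing $\omega$-chain of conditions into a lower bound. My plan is to fix a cofinal sequence $\seq\delta{n}$ with $\sup_n\delta_n=\delta$, use ccc reflection coordinate by coordinate to arrange that the given condition lies in ${\Bbb P}_{\delta_0}$ and that $\dot A_n$ is a ${\Bbb P}_{\delta_{n+1}}$-name, and then run a diagonal recursion: at step $n$, apply strong preservation of ${\Bbb P}_{\delta_{n+1}}$ (available by the inductive hypothesis) to the partial condition built so far and to $\dot A_n$, extracting a further extension inside ${\Bbb P}_{\delta_{n+1}}$ together with a ground-model infinite set $C_n$ with the defining property that the condition forces every infinite set meeting $C_n$ infinitely also to meet $\dot A_n$ infinitely. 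The delicate part --- where the exact formulation of strong preservation in \cite{brendle} must be used rather than any naive fusion --- is to organize the partial conditions and the sets $C_n$ so that the single ground-model sequence $\seq C{n}$ relativizes the whole sequence $\seq{\dot A}{n}$ below the original condition simultaneously; absoluteness then guarantees that each $C_n$, manufactured at stage $\delta_{n+1}$, continues to do its job in $V[G_\delta]$. Checking that this diagonal closes up correctly is the one place where I would expect to have to be genuinely careful rather than merely chase definitions.
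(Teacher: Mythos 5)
You should know at the outset that the paper contains no proof of this lemma to compare against: it is quoted from \cite{brendle} as a black box, and the paper even declines to state the definition of strong preservation of $\omega$-hitting. Judged on its own terms, your successor step (two-step composition, reading the names as $\dot{\Bbb Q}_\gamma$-names over $V^{{\Bbb P}_\gamma}$, then pushing the witnessing family down with the inductive hypothesis) and your uncountable-cofinality step (bounded supports plus ccc reflection of countably many names below a single $\gamma<\delta$) are the standard arguments and are sound.

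The countable-cofinality case, however, has a genuine gap, and it is the only case with real content. You correctly observe that reflection fails there --- a ${\Bbb P}_\delta$-name for an infinite subset of $\omega$ need not be equivalent to a ${\Bbb P}_\gamma$-name for any $\gamma<\delta$ --- and then, two lines later, you propose to ``arrange that $\dot A_n$ is a ${\Bbb P}_{\delta_{n+1}}$-name,'' which is precisely the move you have just ruled out. Once that is withdrawn your diagonal recursion cannot start: strong preservation of ${\Bbb P}_{\delta_{n+1}}$ cannot be applied to a name that does not live below $\delta_{n+1}$, and, as you yourself note, there is no fusion to combine the ``partial conditions built so far'' into a single condition. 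The missing idea is to \emph{project} the names rather than reflect them: for each $k$ and $n$ let $\dot A_n^k$ be the ${\Bbb P}_{\delta_k}$-name for the set of $m\in\omega$ such that some condition of the quotient ${\Bbb P}_\delta/{\bf G}_{\delta_k}$ forces $m\in\dot A_n$; this set is forced to be infinite because $\dot A_n$ is. Apply the inductive hypothesis at stage $\delta_k$ to the countable family $\set{\dot A_n^k|n\in\omega}$, and take as the final witness the union over $k$ of the resulting ground-model families $\set{B_n^k|n\in\omega}$. If $H$ hits every $B_n^k$ infinitely but some $q\leq p$ forces $H\cap\dot A_n$ to be bounded by $m_0$, then $q$ has finite, hence bounded, support and lies in some ${\Bbb P}_{\delta_k}$; since $q\leq p$ forces $H\cap\dot A_n^k$ to be infinite, one may extend $q$ inside ${\Bbb P}_{\delta_k}$ to decide some $m>m_0$ in $H\cap\dot A_n^k$ together with a tail condition putting $m$ into $\dot A_n$, and amalgamating the two contradicts the choice of $q$. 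No decreasing $\omega$-chain of conditions is needed; the whole point is that a failure of the conclusion is witnessed by a single condition of bounded support, which is where the finiteness of supports actually enters. Until your sketch is rebuilt around that device, the limit case does not close.
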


As noted in \cite{hrusak} a forcing that strongly preserves
$\omega$-hitting preserves $\omega$-hitting. Moreover, as the lemma
below implies, these two concepts are equivalent for the forcings used
in the arguments below so the definition of strong preservation is
omitted.

\begin{proposition}[\cite{brendle}]\label{sohp}Let ${\cal I}$ be an
  ideal on $\omega$ and let ${\cal F} = {\cal I}^*$ be the dual
  filter. Then the following are equivalent.
\begin{itemize}
\item[{\rm(1)}] For every $X\in{\cal I}^+$ and every ${\cal J}\leq_K{\cal
  I}\rst X$ the ideal ${\cal J}$ is not $\omega$-hitting.

\item[{\rm(2)}] ${\Bbb L}_{\cal F}$ strongly preserves $\omega$-hitting.

\item[{\rm(3)}] ${\Bbb L}_{\cal F}$ preserves $\omega$-hitting.

\end{itemize}
\end{proposition}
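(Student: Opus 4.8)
\noindent\emph{Proof strategy.} The cycle $(1)\Rightarrow(2)\Rightarrow(3)\Rightarrow(1)$ is the natural plan: $(2)\Rightarrow(3)$ is immediate, $(3)\Rightarrow(1)$ is a contrapositive powered by the generic real, and $(1)\Rightarrow(2)$ is the substantial implication, where hypothesis (1) is exactly what is needed to close off the bad case of a fusion argument.

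First I would record two elementary facts about ${\Bbb L}_{\cal F}$. Let $\dot g$ denote the generic branch and put $R=\mathrm{ran}(\dot g)$. (i) For every $I\in{\cal I}$ the set of conditions forcing $R\cap I$ finite is dense: given $T$, replace each successor set $\mathop{\rm succ}_T(s)$ above the stem by $\mathop{\rm succ}_T(s)\setminus I$, which is still in ${\cal F}$ because $\omega\setminus I\in{\cal F}$. (ii) For every $X\in{\cal I}^+$ the set of conditions forcing $|R\cap X|=\omega$ is dense: since $\omega\setminus F\in{\cal I}$ for $F\in{\cal F}$, the set $F\cap X=X\setminus(\omega\setminus F)$ is ${\cal I}$-positive, hence infinite, so one can repeatedly extend the stem inside $X$ without leaving ${\Bbb L}_{\cal F}$. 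Thus ${\Bbb L}_{\cal F}$ generically adds an infinite $R$ that is almost disjoint from every member of ${\cal I}$ and meets every ${\cal I}$-positive set infinitely. The implication $(2)\Rightarrow(3)$ needs no further work: strong preservation of $\omega$-hitting is by design a strengthening of preservation, as already remarked.

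For $(3)\Rightarrow(1)$ I argue contrapositively. Suppose $X\in{\cal I}^+$, ${\cal J}\leq_K{\cal I}\rst X$ is witnessed by $f\colon X\to\omega$ with $f^{-1}[J]\in{\cal I}$ for all $J\in{\cal J}$, and ${\cal J}$ is $\omega$-hitting; by definition this last statement means that the infinite members of ${\cal J}$ form an $\omega$-hitting family ${\cal H}$ in $V$. I claim ${\Bbb L}_{\cal F}$ destroys it. In $V[G]$ the set $R\cap X$ is infinite by (ii), and its pushforward $Y=f[R\cap X]$ is infinite and almost disjoint from every $J\in{\cal J}$: indeed $Y\cap J\subseteq f[\,R\cap f^{-1}[J]\,]$ is finite for $J\in{\cal J}$ by (i), and the same applied to arbitrary finite $J$ shows $Y$ is not almost contained in any finite set. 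Splitting $Y$ into infinitely many infinite pieces $\seq{\dot A}{n}$ by a fixed recipe, every member of ${\cal H}$ meets each $\dot A_n\subseteq Y$ finitely, so no $H\in{\cal H}$ meets all of them infinitely and ${\cal H}$ fails to be $\omega$-hitting in $V[G]$.

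The heart of the matter is $(1)\Rightarrow(2)$. Given an $\omega$-hitting ${\cal H}$ in $V$, a condition $T$, and names $\seq{\dot A}{n}$ for infinite subsets of $\omega$, the plan is to run a fusion along ${\Bbb L}_{\cal F}$ — enumerating tasks $(s,n,k)$, repeatedly shrinking finitely many successor sets (which stays inside the filter ${\cal F}$) and freezing longer and longer initial segments — so as to produce $T^\ast\leq T$ with the same stem together with, for each $n$, an infinite $B_n\in V$ read off the branching of $T^\ast$ (the values $v>k$ that some node of $T^\ast$ forces into $\dot A_n$), arranged so that hitting $B_n$ infinitely transfers, along every branch, to hitting $\dot A_n$ infinitely. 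Feeding $\seq{B}{n}$ to the $\omega$-hitting family yields $H\in{\cal H}$ meeting every $B_n$ infinitely; one then checks that some extension of $T^\ast$ forces $|H\cap\dot A_n|=\omega$ for all $n$, which is exactly the (unstated) conclusion of strong preservation. The only way this verification can break down is if, below some node $s$ of $T^\ast$, the elements of $H$ that can still be pushed into $\dot A_n$ live over an ${\cal I}\rst X$-small subtree, where $X=\mathop{\rm succ}_{T^\ast}(s)\in{\cal F}\subseteq{\cal I}^+$; unwinding that failure down the tree produces a function exhibiting some $\omega$-hitting ideal as $\leq_K{\cal I}\rst X$, contradicting (1). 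Arranging the fusion so that this Kat\v etov witness is genuinely visible — and, since ${\cal F}$ need not be a P-filter, threading the decisions through the fusion levels rather than through a single pseudo-intersection — is where the real difficulty lies; everything else is bookkeeping. (The full equivalence then also records that for ${\Bbb L}_{\cal F}$ the two notions of preservation coincide.)
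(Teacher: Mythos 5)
You should first note that the paper does not actually prove this proposition: it is imported verbatim from Brendle--Hru\v s\'ak \cite{brendle}, so there is no in-paper argument to compare yours against, and I can only judge the attempt on its own terms. Your cycle $(1)\Rightarrow(2)\Rightarrow(3)\Rightarrow(1)$ is the right decomposition, $(2)\Rightarrow(3)$ is indeed free, and your $(3)\Rightarrow(1)$ is essentially complete and correct: both density facts about the generic branch are right (for the second one you use that $F\cap X=X\setminus(\omega\setminus F)$ is ${\cal I}$-positive, hence infinite, for $F\in{\cal F}$ and $X\in{\cal I}^+$), the pushforward $Y=f[R\cap X]$ is infinite because singletons lie in ${\cal J}$ and so $f$ is finite-to-one on $R\cap X$, and $Y$ is almost disjoint from every infinite member of ${\cal J}$, which destroys $\omega$-hitting in the extension (you do not even need to split $Y$: the constant sequence $A_n=Y$ already witnesses the failure).

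The gap is $(1)\Rightarrow(2)$, which is the entire content of the proposition, and your text for it is a statement of intent rather than a proof. You assert that a fusion produces $T^{\ast}$ together with ground-model infinite sets $B_n$ such that hitting $B_n$ infinitely ``transfers'' to hitting $\dot A_n$ infinitely, and that the only obstruction to the final verification is a configuration yielding a Kat\v etov reduction of an $\omega$-hitting ideal to ${\cal I}\rst X$. Neither claim is established. You do not say how $B_n$ is read off $T^{\ast}$ in a usable way: the values that \emph{some} node of $T^{\ast}$ forces into $\dot A_n$ may be forced by pairwise incompatible nodes, so an $H$ meeting $B_n$ infinitely does not by itself give a single $q\leq T^{\ast}$ forcing $|H\cap\dot A_n|=\omega$; closing that gap is exactly the rank/pure-decision analysis on ${\Bbb L}_{\cal F}$ that the real proof consists of. Likewise you never identify which ideal ${\cal J}$, which ${\cal I}$-positive $X$, and which function $f:X\to\omega$ the ``failure case'' produces, nor why that ${\cal J}$ is $\omega$-hitting --- and that identification is precisely where hypothesis (1) is consumed. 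As written, the core of the argument is circular (``the only way this can break down is the way that contradicts (1)''). You flag the difficulty yourself, which is honest, but it means the one substantive implication is not proved; the correct move is either to carry out that analysis in full or to cite \cite{brendle} for it, as the paper does.
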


The {\em Kat\v etov order $\leq_K$} on ideals used above
is defined by putting ${\cal I}\leq_K{\cal J}$ whenever ${\cal I}$ and
${\cal J}$ are ideals on $\omega$ and there exists an
$f:\omega\to\omega$ such that $f^{-1}(I)\in{\cal J}$ for every
$I\in{\cal I}$.

Let $X$ be a topological space, $\nwd(X)$ be the ideal of nowhere dense
subsets of $X$, and $\nwd^*(X)$ be the filter of dense open subsets of
$X$. Let ${\cal I}_x$ be the ideal dual to the filter of open neighborhoods of $x\in
X$. Finally, the {\em $\pi$-character} $\pi\chi(x, X)$ is defined as
the smallest cardinality of a family ${\cal U}$ of open subsets of $X$
such that every neighborhood of $x$ contains a $U\in{\cal U}$. It is a
well known fact that $\pi$-character and character coinside in
topological groups thus every nonmetrizable topological group has an
uncountable $\pi$-character due to Birkhoff-Kakutani theorem.

The next proposition is a direct restatement of Proposition~5.3 (a)
\cite{hrusak}. 

\begin{proposition}[\cite{hrusak}]\label{pihit}
Let $X$ be a countable regular space and $x\in X$. If $\pi\chi(x, X) >
\omega$ then $\force_{{\Bbb L}_{\nwd^*(X)}}\text{``}\dot A_{gen}\in{\cal
  I}_x^+\wedge{\cal I}_x\rst\dot A_{gen}\text{ is
  $\omega$-hitting}\text{''}$
\end{proposition}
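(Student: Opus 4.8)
The plan is to use the two defining features of the Mathias--Prikry generic $\dot A_{gen}$ for $\nwd^*(X)$ — it is forced to be a dense subset of $X$ which has finite intersection with every closed nowhere dense set — together with the hypothesis $\pi\chi(x,X)>\omega$. First I would check $\force\dot A_{gen}\in{\cal I}_x^+$, i.e.\ $x\in\overline{\dot A_{gen}}$. This needs only the tree structure of ${\Bbb L}_{\nwd^*(X)}$: for a nonempty open $U\subseteq X$ (in particular a neighborhood of $x$) and $k\in\omega$, the set of conditions $T$ with $|\mathop{\rm ran}(s_T)\cap U|\ge k$ is dense, since $\mathop{\rm succ}_T(s)$ always contains a dense open set, a dense open set meets the nonempty open set $U$, so from any $T$ the stem can be extended into $U$, and this can be iterated $k$ times. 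Hence $\dot A_{gen}$ is forced to meet every nonempty open subset of $X$ in an infinite set; in particular $x\in\overline{\dot A_{gen}}$.

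For the main assertion, that $\force{\cal I}_x\rst\dot A_{gen}$ is $\omega$-hitting, I would argue in $V[G]$. Let $\seq Bn$ be a sequence of infinite subsets of $A_{gen}$. The key point is that each $B_n$ is \emph{somewhere dense}: an infinite subset of $A_{gen}$ cannot be contained in a closed nowhere dense set (a subset of $A_{gen}$ meets such a set only finitely), so there is a nonempty open $W_n$ in which $B_n$ is dense. Now the still countable family $\set{W_n\setminus F|n\in\omega,\ F\in[W_n]^{<\omega}}$ (the enlargement is what takes care of isolated points) is, by $\pi\chi(x,X)>\omega$, not a $\pi$-base at $x$: there is an open $V\ni x$ containing no member of it, and this forces $W_n\setminus V$ to be infinite for every $n$. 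By regularity fix an open $V'\ni x$ with $\overline{V'}\subseteq V$ and put $W:=X\setminus\overline{V'}$. Then $W$ is open, $x\notin\overline W$, and $W\cap W_n\supseteq W_n\setminus V$ is an infinite open subset of $W_n$, so $B_n\cap W$ is infinite for every $n$. Finally set $I:=A_{gen}\cap W$: then $I\subseteq A_{gen}$ and $\overline I\subseteq\overline W\not\ni x$, so $I\in{\cal I}_x\rst A_{gen}$, while $I\cap B_n=B_n\cap W$ is infinite for every $n$. Thus $I$ witnesses the $\omega$-hitting property for $\seq Bn$; as this sequence was arbitrary, ${\cal I}_x\rst A_{gen}$ is $\omega$-hitting.

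The step I expect to be the main obstacle is justifying the ``somewhere dense'' claim as a statement about $V[G]$: one needs $\dot A_{gen}$ to have finite intersection with every closed nowhere dense subset of $X$ \emph{in the extension}, not merely with the ground-model ones, so that $\overline{B_n}^{V[G]}$ genuinely has nonempty interior. I would extract this from the properties of ${\Bbb L}_{\cal F}$ recorded in \cite{hrusak}, or else prove it directly by a fusion that thins successor sets against a name for a closed nowhere dense set; that is also the natural place to recast the whole argument, if preferred, as a fusion inside the forcing producing a name $\dot I=\dot A_{gen}\cap\check W$ for a ground-model open $W$ with $x\notin\overline W$. The remaining bookkeeping — the enlargement of $\seq Wn$ used above, and the fact that regularity of $X$ supplies the neighborhood $V'$ — is routine.
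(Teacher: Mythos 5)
The paper offers no proof of this proposition at all --- it is imported verbatim as Proposition~5.3(a) of \cite{hrusak} --- so I can only measure your attempt against what a correct proof must contain. Your first step and your final $\pi$-character/regularity computation are fine, but the argument collapses at exactly the step you flagged: the claim that in $V[{\bf G}]$ every infinite $B\subseteq A_{gen}$ is somewhere dense, equivalently that $\dot A_{gen}$ meets every nowhere dense set \emph{of the extension} finitely. This is false, and no fusion will rescue it, because the generic only diagonalizes the \emph{ground-model} dense open sets. Concretely, suppose $X$ contains a non-isolated point $y$ of countable character with a neighborhood base $V_1\supseteq V_2\supseteq\cdots$; the hypothesis $\pi\chi(x,X)>\omega$ concerns only the single point $x$ and permits this (take $X={\Bbb Q}\oplus Z$ with $x\in Z$ of uncountable $\pi$-character). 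The set $\dot B=\set{g(k)|k\geq 1,\ g(k)\in V_k}$, where $g$ is the generic branch, is forced to be infinite (given any condition and any $K$, extend the stem to length $k>K$ and choose the next value in $\mathop{\rm succ}(\cdot)\cap V_k$, which is nonempty since successor sets are dense open), and it is forced to converge to $y$; hence $\dot B\cup\{y\}$ is a new closed nowhere dense set meeting $\dot A_{gen}$ in an infinite set.

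The gap is not merely in the justification but in the strategy. The properties of the realized sets $B_n$ that are actually available in $V[{\bf G}]$ --- infinite, contained in $A_{gen}$, finite intersection with every \emph{ground-model} nowhere dense set --- do not exclude the possibility that some $B_n$ is almost contained in every neighborhood of $x$ itself; for such a $B_n$ no $I\in{\cal I}_x$ can meet it infinitely, so no argument using only those properties can succeed. Ruling this out is the entire content of the proposition, and it must be done at the level of names and conditions. The correct analogue of your ``somewhere dense'' observation lives there: for a condition $T$, a node $s\supseteq s_T$ and a name $\dot B_n$, the set $P_{n,s}$ of points that some extension of $T_s$ forces into $\dot B_n\setminus\mathop{\rm ran}(s)$ must be somewhere dense --- otherwise one prunes every successor set into the dense open complement of $\overline{P_{n,s}}$ and thereby forces $\dot B_n$ to be finite. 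One then runs your $\pi$-character argument on the countable family of open sets in which the $P_{n,s}$ are dense, and a fusion produces a condition forcing $|W\cap\dot B_n|=\omega$ for all $n$ for a single ground-model open $W$ with $x\notin\overline W$. So your last step is the right second half of a proof, but the first half has to be replaced by a genuine forcing argument rather than derived from a property of $A_{gen}$ that it does not have in the extension.
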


Lemmas~6.4 and~6.5 in \cite{hrusak} are stated for a countable
Fr\'echet group $G$ whereas a careful reading of their proofs reveals that
Fr\'echetness can be replaced by a weaker condition below.

\begin{itemize}
\item[(C)] For every countable family $\set{N_i|i\in\omega}$ of
  nowhere dense subsets of $G$ there exists a nontrivial convergent
  sequence $C\subseteq G$ such that $C\to\uG$ and $C\cap N_i$ is
  finite for every $i\in\omega$.

\end{itemize}

To state the lemma we need two more definitions from
\cite{hrusak}. The definitions encapsulate the connection between the
algebraic structure of an abstract group $G$ and its topology.

\begin{defn}[\cite{hrusak}]Let $G$ be an abstract group and let $X\subseteq
  G\setminus\{\uG\}$. A subset $A\subseteq G$ is called {\em
    $X$-large} if for every $b\in X$ and $a\in G$ either $a\in A$ or
  $b\cdot a^{-1}\in A$.
\end{defn}

\begin{defn}[\cite{hrusak}]A family ${\cal C}$ of subsets of an abstract group $G$ is
  {\em $\omega$-hitting w.r.t.~$X$} if given a family $\seq An$ of $X$-large
  sets there is a $C\in{\cal C}$ such that $C\cap A_n$ is infinite for
  all $n\in\omega$.
\end{defn}

\begin{lemma}[\cite{hrusak}]\label{closowh}Let $G$ be a topological
  group that satisfies {\rm (C)}. Then
$$
\force_{{\Bbb L}_{\nwd^*(G)}}\text{``$\,{\cal C}$ is $\omega$-hitting
  w.r.t.~$\dot A_{gen}$''}
$$
where ${\cal C} = {\cal I}^\perp_{\uG}$ is the ideal consisting of
sequences converging to $\uG$.
\end{lemma}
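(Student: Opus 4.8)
The plan is to reduce Lemma~\ref{closowh} to Lemma~\ref{fsohp}-style preservation machinery, specifically to Proposition~\ref{pihit} and Proposition~\ref{sohp}, by recognizing that ``$\omega$-hitting w.r.t.~$\dot A_{gen}$'' in the group $G$ is exactly the statement that a suitable ideal (related to $\mathcal I_{\uG}$ and the $X$-large sets) fails to be Katětov-above something $\omega$-hitting — i.e.~the genericity of $\dot A_{gen}$ supplies $X$-large sets whose intersection with a convergent sequence $C\in\mathcal C$ must be infinite. First I would unwind the definitions: fix a name $\seqn{\dot A_n}{n}{\omega}$ for a sequence of $\dot A_{gen}$-large subsets of $G$, forced by some condition $p\in{\Bbb L}_{\nwd^*(G)}$. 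The goal is to find (densely below $p$) a condition $q\le p$ and a $C\in\mathcal C={\cal I}^\perp_{\uG}$ — a nontrivial sequence converging to $\uG$ — such that $q\force$ ``$C\cap\dot A_n$ is infinite for all $n$''.

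The key step is to translate $X$-largeness of $A$ (for $X=\dot A_{gen}$) into a covering property that interacts with condition~(C). For $b\in\dot A_{gen}$ and $a\in G$, $X$-largeness says $a\in A$ or $b\cdot a^{-1}\in A$; reading this with $a$ ranging over a would-be convergent sequence near $\uG$, it forces $A$ (or a left translate $bA$, equivalently $b\cdot A^{-1}$) to be ``co-nowhere-dense'' near $\uG$ along the generic set — essentially, the complement of each $A_n$, intersected with translates indexed by $\dot A_{gen}$, is confined to a nowhere dense set. Here is where the hypothesis (C) enters, replacing Fréchetness exactly as the excerpt promises: given the countably many nowhere dense sets $N_i$ extracted from the $\dot A_n$'s and the generic stem data, (C) produces a single sequence $C\to\uG$ with $C\cap N_i$ finite for all $i$. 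That $C$ is the witness in $\mathcal C$. One then checks that avoiding the $N_i$ forces $C$ to meet each $\dot A_n$ infinitely often, using the defining Mathias-Prikry property of ${\Bbb L}_{\nwd^*(G)}$: below the stem, every successor set is dense open, so a fusion/diagonalization argument thins the tree to guarantee that for each $n$ and each $k$, some node past level $k$ forces a new element of $C$ into $\dot A_n$.

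The main obstacle I expect is the bookkeeping in that last diagonalization: the $\dot A_n$ are only names, so one cannot decide membership in advance, and the interplay between the tree-fusion on the forcing side and the recursive construction of $C$ on the ground-model side must be synchronized so that $C$ is genuinely a ground-model convergent sequence (an element of $\mathcal C$) while still being forced to hit every $\dot A_n$ infinitely. The resolution should mirror the proof of Lemmas~6.4--6.5 in \cite{hrusak} verbatim once Fréchetness is weakened to (C): at stage $s$ of the fusion one uses (C) with the nowhere dense sets accumulated so far to extend $C$ by finitely many new points lying outside those sets, then prunes the tree so that these points are forced into the relevant $\dot A_n$'s via $X$-largeness against elements $b$ appearing in the generic set below the current node. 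I would present the argument as: (i) extract the nowhere dense sets; (ii) invoke (C) to get $C$; (iii) run the fusion to pin down infinitude; and remark that step (ii) is the only place Fréchetness was used in \cite{hrusak}, so (C) suffices throughout.
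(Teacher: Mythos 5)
The paper does not actually prove this lemma: it is quoted from \cite{hrusak} (Lemmas~6.4 and~6.5 there), and the author's only contribution to it is the remark, made just before the statement, that a careful reading of those proofs shows Fr\'echetness can be replaced by condition~(C). Your proposal is therefore competing with a citation rather than with a written proof, and at the level of strategy it matches what the citation intends: reduce to finding, below a given condition and for given names $\dot A_n$ of $\dot A_{gen}$-large sets, a ground-model sequence $C\to\uG$ and a stronger condition forcing each $C\cap\dot A_n$ to be infinite; extract countably many nowhere dense sets from largeness against the generic; and invoke (C) exactly where \cite{hrusak} invokes Fr\'echetness. Your closing remark that (C) is the only point at which Fr\'echetness enters is precisely the observation the paper makes.

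That said, two parts of your sketch would not survive being written out. First, the translation of largeness is garbled: from ``$a\in A$ or $b\cdot a^{-1}\in A$'' one does not get ``$bA$, equivalently $b\cdot A^{-1}$.'' The clean reformulation, which the paper itself relies on in the main theorem (``$U\cdot U\cap A_{gen}=\emptyset$ implies $X\setminus U$ is $A_{gen}$-large''), is that $A$ is $X$-large if and only if $X\cap(D\cdot D)=\emptyset$, where $D=G\setminus A$; everything downstream should be phrased in terms of $D\cdot D$ missing the generic set. Second, the central combinatorial step --- that a name $\dot D$ forced to satisfy $\dot D\cdot\dot D\cap\dot A_{gen}=\emptyset$ is thereby confined to ground-model nowhere dense sets --- is asserted but not argued. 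This is where the structure of ${\Bbb L}_{\nwd^*(G)}$ is genuinely used: conditions with a common stem are compatible ($\sigma$-centeredness), so one can collect, stem by stem, the points some extension forces into $\dot D$; were such a set somewhere dense, the density of the successor sets $\mathop{\rm succ}_T(s)\in\nwd^*(G)$ would force a point of $\dot D\cdot\dot D$ into $\dot A_{gen}$. Moreover, this only yields a \emph{countable family} of nowhere dense sets covering the complement of each $\dot A_n$, and $C\cap N_i$ finite for every $i$ does not prevent $C$ from being almost covered by $\bigcup_i N_i$; so the fusion you defer to step~(iii) is not optional bookkeeping but the actual mechanism that forces infinitely many points of $C$ into each $\dot A_n$. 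These are gaps relative to the proof in \cite{hrusak}; relative to the present paper, which declines to reproduce that proof, your outline is as complete as the source it is being compared against.
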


The last two lemmas from \cite{hrusak} deal with the preservation of
$\omega$-hitting w.r.t.~$X$. As before, \cite{hrusak} notes that
strong preservation implies preservation, thus the definition of
strong preservation of $\omega$-hitting w.r.t.\ $X$ is omitted.

\begin{lemma}[\cite{hrusak}]\label{sigpr}$\sigma$-centered forcings strongly preserve $\omega$-hitting
  w.r.t.~$X$.
\end{lemma}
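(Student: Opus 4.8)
The plan is to adapt the standard $\sigma$-centered decomposition argument for strong preservation of $\omega$-hitting, replacing ``infinite subset of $\omega$'' by ``$X$-large subset of $G$'' throughout; below, elements of $G$ are identified with their canonical names. Unwinding the definition of strong preservation, the goal reduces to this: given a $\sigma$-centered poset $P$, a family ${\cal C}$ of subsets of $G$ that is $\omega$-hitting w.r.t.~$X$ in the ground model, a condition $p\in P$, and a sequence $\seqn{\dot A_k}{k}{\omega}$ of $P$-names with $p\force\text{``each $\dot A_k$ is an $X$-large subset of $G$''}$, find a \emph{single} $C\in{\cal C}$ with $p\force\text{``$C\cap\dot A_k$ is infinite for all $k\in\omega$''}$. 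It is the uniformity here --- one $C$, with no passing to an extension of $p$ --- that makes this \emph{strong} preservation, and that will be consumed by the finite support iteration lemma (the analogue of Lemma~\ref{fsohp} for $\omega$-hitting w.r.t.~$X$).

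First I would fix $P,{\cal C},p$ and $\seqn{\dot A_k}{k}{\omega}$, and write $P\rst p=\set{q\in P|q\leq p}=\bigcup\set{Q_m|m\in\omega}$ with each $Q_m$ centered and nonempty (possible since $\sigma$-centeredness passes to $P\rst p$). For each $m,k\in\omega$ I would form the ``trace''
\[
A_{m,k}=\set{g\in G|\text{for every }q\in Q_m\text{ there is }q'\leq q\text{ with }q'\force g\in\dot A_k},
\]
the set of those $g\in G$ that no condition in $Q_m$ forces out of $\dot A_k$. The crux --- and the only place where centeredness is used essentially --- is to check that each $A_{m,k}$ is $X$-large: given $b\in X$ and $a\in G$, if both $a\notin A_{m,k}$ and $b\cdot a^{-1}\notin A_{m,k}$, pick $q_1,q_2\in Q_m$ with $q_1\force a\notin\dot A_k$ and $q_2\force b\cdot a^{-1}\notin\dot A_k$, take a common extension $r\leq q_1,q_2$ using centeredness of $Q_m$, and note that $r\leq p$, so $r$ forces $\dot A_k$ to be $X$-large and hence forces $a\in\dot A_k\vee b\cdot a^{-1}\in\dot A_k$, contradicting $r\leq q_1,q_2$. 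Granting this, $\set{A_{m,k}|m,k\in\omega}$ is a countable family of $X$-large sets, so applying the $\omega$-hitting property of ${\cal C}$ w.r.t.~$X$ in the ground model produces a $C\in{\cal C}$ meeting every $A_{m,k}$ in an infinite set.

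Finally I would verify $p\force\text{``$C\cap\dot A_k$ is infinite for all $k$''}$. If not, some $q\leq p$ and some $k$ force $C\cap\dot A_k$ finite, and by refining $q$ we may assume $q\force C\cap\dot A_k=F$ for a concrete finite $F\subseteq C$. Picking $m$ with $q\in Q_m$ and $g\in(C\cap A_{m,k})\setminus F$ --- which exists since $C\cap A_{m,k}$ is infinite --- the membership $g\in A_{m,k}$ yields a $q'\leq q$ with $q'\force g\in\dot A_k$, while $q\force g\notin\dot A_k$; contradiction. I expect the main obstacle to be exactly the $X$-largeness of the traces $A_{m,k}$: the verification is short, but it is the one step that exploits the algebraic form of $X$-large sets together with centeredness, and it is what ties the argument to $\sigma$-centered rather than merely ccc forcings. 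A secondary point worth confirming is that the ``single $C$ below $p$'' statement proved here coincides with the (omitted) definition of strong preservation of $\omega$-hitting w.r.t.~$X$ inherited from \cite{brendle}.
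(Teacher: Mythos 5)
The paper does not prove this lemma---it is imported verbatim from \cite{hrusak}, with even the definition of strong preservation deliberately omitted---so there is no in-paper argument to compare against; your reconstruction is the standard one from that source and is correct. In particular, your trace sets $A_{m,k}$ together with the final verification establish exactly the Brendle--Hru\v s\'ak form of strong preservation (ground-model $X$-large sets such that any $C$ meeting each of them infinitely is forced by $p$ to meet every $\dot A_k$ infinitely), and the two-condition amalgamation proving $X$-largeness of $A_{m,k}$ is precisely the step where $\sigma$-centeredness, rather than mere ccc, is used.
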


\begin{lemma}[\cite{hrusak}]\label{fipr}Finite support iterations of ccc forcings that strongly
  preserve $\omega$-hitting w.r.t.~$X$ strongly preserve
  $\omega$-hitting w.r.t.~$X$.
\end{lemma}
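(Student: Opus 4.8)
The plan is to prove Lemma~\ref{fipr} by induction on the length $\delta$ of the finite support iteration $\seqn{\Bbb P_\alpha,\dot{\Bbb Q}_\alpha}{\alpha}{\delta}$, following the proof of the unrestricted version, Lemma~\ref{fsohp}, essentially line for line, with ``$X$-large subset of $G$'' in the role previously played by ``infinite subset of $\omega$''. The one place where the two arguments might a priori diverge is the behaviour of $X$-largeness under forcing, and this is harmless: since $G$ is a fixed countable group and $X\subseteq G\setminus\{\uG\}$, the statement ``$A$ is $X$-large'' --- that is, $\forall b\in X\,\forall a\in G\,(a\in A\vee b\cdot a^{-1}\in A)$ --- is a $\Pi^0_1$ assertion about the characteristic function of $A$ with parameters $G,X$, hence absolute among all the models occurring in the iteration. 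In particular a ground-model set is $X$-large in a later model iff it is $X$-large already, so the combinatorial heart of Brendle's argument transfers verbatim; the definition of strong preservation of $\omega$-hitting w.r.t.~$X$ being a purely formal refinement of that of strong preservation of $\omega$-hitting, the same is true of it.

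For the successor step write $\Bbb P_{\alpha+1}=\Bbb P_\alpha*\dot{\Bbb Q}_\alpha$. By the inductive hypothesis $\Bbb P_\alpha$ strongly preserves $\omega$-hitting w.r.t.~$X$, and by assumption $\force_{\Bbb P_\alpha}$``$\dot{\Bbb Q}_\alpha$ strongly preserves $\omega$-hitting w.r.t.~$X$''; one then runs the two-step composition lemma. Given a $\Bbb P_{\alpha+1}$-name for a sequence $\seq An$ of $X$-large sets and a condition, first apply strong preservation of $\dot{\Bbb Q}_\alpha$ inside $V^{\Bbb P_\alpha}$ to replace the $\dot A_n$ by $\Bbb P_\alpha$-names together with the ground-model-over-$V^{\Bbb P_\alpha}$ data the definition supplies, then feed those into strong preservation of $\Bbb P_\alpha$; amalgamating the two witnesses yields one for $\Bbb P_{\alpha+1}$. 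When $\delta$ is a limit of uncountable cofinality the ccc of $\Bbb P_\delta$ and the finite-support structure force any sequence of names for subsets of the countable set $G$ to be (equivalent to) a sequence of $\Bbb P_\beta$-names for some $\beta<\delta$, so the inductive hypothesis applies directly.

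The main obstacle is the limit step of countable cofinality. Fix an increasing sequence $\seq{\alpha}k$ with $\sup_k\alpha_k=\delta$, a condition $p$, and a $\Bbb P_\delta$-name for a sequence $\seq An$ of $X$-large sets that need not be captured by any single $\Bbb P_{\alpha_k}$. Using that each $\Bbb P_{\alpha_k}$ (being a finite support iteration of length $\alpha_k<\delta$) strongly preserves $\omega$-hitting w.r.t.~$X$ by the inductive hypothesis, one carries out the standard diagonalization: build a descending sequence $p=p_0\geq p_1\geq\cdots$ with $p_k$ deciding more and more of $\seq An$ below stage $\alpha_k$, and at step $k$ invoke strong preservation of $\Bbb P_{\alpha_k}$ to extract ground-model $X$-large approximations to the ``$\alpha_k$-part'' of the first $k$ names, collecting all of these into a single sequence of $X$-large sets of $V$. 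The finite-support structure lets the $p_k$ be chosen with a common lower bound $q$, and strong rather than merely plain preservation at each stage is precisely what makes the extracted approximations coherent enough that below the $\Bbb P_\delta$-generic under $q$ every $\cal C$ that is $\omega$-hitting w.r.t.~$X$ in $V$ still meets each $\dot A_n$ infinitely often, and that $q$ in fact forces the ``strong'' witness required to close the induction. Lining up the three bookkeeping indices --- the coordinate $n$, the stage $k$, and the internal approximation index supplied by strong preservation --- is the only delicate point, and it is handled exactly as in the proof of Lemma~\ref{fsohp}.
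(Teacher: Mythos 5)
First, a point of reference: the paper offers no proof of this lemma. It is quoted from \cite{hrusak} (ultimately from \cite{brendle}), and even the definition of ``strongly preserves $\omega$-hitting w.r.t.~$X$'' is deliberately omitted, so there is no in-paper argument to compare yours against. Your overall architecture --- induction on the length of the iteration, composition at successor steps, ccc capture of nice names at limits of uncountable cofinality, together with the observation that $X$-largeness of a ground-model subset of the fixed countable group is absolute and upward closed under $\subseteq$, so that the combinatorics of Lemma~\ref{fsohp} transfer --- is the right skeleton.

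The countable-cofinality limit step, however, contains a genuine error. You build a descending sequence $p=p_0\geq p_1\geq\cdots$ and assert that ``the finite-support structure lets the $p_k$ be chosen with a common lower bound $q$.'' Finite support ccc iterations are nowhere near $\sigma$-closed: a strictly descending $\omega$-sequence of conditions (already in Cohen forcing, the first nontrivial instance here) in general has no lower bound, and no bookkeeping restores one. Moreover, even granting such a $q$, producing a witness that works below $q$ does not verify strong preservation, whose conclusion must be forced by the originally given $p$; this is exactly what makes the property iterable, and weakening it to ``densely many $q\leq p$'' would break the successor step of your own induction. The standard argument avoids fusion entirely: assuming $p\in{\Bbb P}_{\alpha_0}$, for each $k$ and $n$ one replaces $\dot A_n$ by its projection to stage $\alpha_k$, namely the ${\Bbb P}_{\alpha_k}$-name for the set of points put into $\dot A_n$ by some condition below $p$ whose restriction to $\alpha_k$ lies in the generic filter. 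This projection contains $\dot A_n$, hence is again forced to be $X$-large by the upward closure and absoluteness you already isolated. One then applies the inductive hypothesis to these countably many ${\Bbb P}_{\alpha_k}$-names, collects all the resulting ground-model $X$-large sets into a single countable family, and checks --- by amalgamating a condition of ${\Bbb P}_{\alpha_k}$ with the tail beyond $\alpha_k$ of a condition of ${\Bbb P}_\delta$ --- that this family witnesses strong preservation for $p$ itself. Your proposal needs this projection-and-amalgamation step in place of the fusion it currently relies on.
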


Recall that a space $X$ is called $T_5$ (or {\em hereditarily normal})
if every subspace of $X$ is normal. For every $x\in X$, the
{\em pseudocharacter} $\psi(X,x)$ of $x$ in $X$ is defined as the
smallest cardinality of a family ${\cal U}$ of open neighborhoods of
$x$ such that $\bigcap{\cal U} = \{x\}$. The pseudocharacter of $X$ is
then $\psi(X) = \sup\set{\psi(X,x)|x\in X}$.

The following result from \cite{dikranjan} is an elegant extension
of Kat\v etov's product lemma to topological groups.

\begin{theorem}[\cite{dikranjan}]\label{hnpsi}Let $G$ be a $T_5$
  topological group. If there exists a nontrivial convergent sequence
  in $G$ then $\psi(G) = \omega$.
\end{theorem}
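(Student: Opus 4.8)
The plan is to prove the equivalent statement that $\{\uG\}$ is a $G_\delta$ subset of $G$; since a topological group is homogeneous we have $\psi(G,x)=\psi(G,\uG)$ for every $x$, so this yields $\psi(G)=\omega$. I would argue by contradiction, assuming that $\{\uG\}$ is \emph{not} a $G_\delta$. Translating the given nontrivial convergent sequence by the inverse of its limit and deleting at most one term, fix a sequence $\seq cn$ of pairwise distinct points of $G\setminus\{\uG\}$ with $c_n\to\uG$; then $\uG$ is not isolated (otherwise the sequence would be eventually constant), so $G\setminus\{\uG\}$ is dense. A routine Hausdorffness argument shows that $\uG$ is the unique accumulation point of $\set{c_n|n\in\omega}$ and that each $c_n$ is isolated in $S:=\set{c_n|n\in\omega}\cup\{\uG\}$, so $S$ is a convergent sequence together with its limit; in particular each $G\times\{c_n\}$ is clopen in the product $Z:=G\times S$, while $G\times\{\uG\}$ is closed in $Z$.

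Next I would set up the Kat\v etov configuration inside $Z$. Let $A:=(G\setminus\{\uG\})\times\{\uG\}$ and $B:=\set{(\uG,c_n)|n\in\omega}$. Using that $\uG$ is not isolated one checks that $\overline A^{\,Z}=G\times\{\uG\}$, and using that $S$ is a convergent sequence that $\overline B^{\,Z}=B\cup\{(\uG,\uG)\}$; hence $\overline A\cap B=\overline B\cap A=\emptyset$ and $\overline A\cap\overline B=\{(\uG,\uG)\}$, so $A$ and $B$ are disjoint \emph{closed} subsets of the open subspace $Z^-:=Z\setminus\{(\uG,\uG)\}$.

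Granting that $Z^-$ is normal, the contradiction would be obtained as follows. Choose disjoint open $U\supseteq A$ and $V\supseteq B$ in $Z^-$ (hence open in $Z$). For $n\in\omega$ put $V_n:=\set{x\in G|(x,c_n)\in V}$, an open subset of $G$ containing $\uG$ (here $G\times\{c_n\}$ being clopen in $Z$ is used). Given $g\in G\setminus\{\uG\}$, the point $(g,\uG)$ lies in $A\subseteq U$, so $O\times P\subseteq U$ for some open $O\ni g$ in $G$ and some neighbourhood $P$ of $\uG$ in $S$; since $c_n\to\uG$ this gives $O\times\{c_n\}\subseteq U$ for all large $n$, and $U\cap V=\emptyset$ then forces $g\notin V_n$ for all large $n$. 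Thus every $g\neq\uG$ lies in only finitely many $V_n$, while $\uG$ lies in all of them, so $\{\uG\}=\bigcap_{k\in\omega}\bigcup_{n\geq k}V_n$ would be a $G_\delta$ — contrary to assumption. Therefore $\{\uG\}$ is a $G_\delta$ and $\psi(G)=\omega$.

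The one substantial gap to fill is the normality of $Z^-$; it is cleaner to prove instead that $G\times S$ is hereditarily normal (which implies it, since $Z^-$ is open in $Z$). This is the heart of the matter, and the step where the group structure must be used in an essential way beyond mere $T_5$-ness: $G\times S$ is a copy of $G\times(\omega+1)$, which is normal exactly when $G$ is normal \emph{and countably paracompact}, and a $T_5$ topological group need not be countably paracompact in general. The intended route — the group-theoretic substitute for Kat\v etov's product lemma — is a coherent-refinement argument: given a disjoint closed pair in $Z^-$, separate its trace on the bottom copy $G\setminus\{\uG\}$ using $T_5$-ness of $G$, and then transport a \emph{single} such choice to the levels $c_n$ via the translation homeomorphisms $x\mapsto c_n x$, shrinking neighbourhoods along the sequence $\seq cn$ so that the union of the level-wise separations stays open. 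I expect this propagation step — in effect, showing that a $T_5$ topological group carrying a nontrivial convergent sequence is countably paracompact "enough" for the argument — to be the main obstacle; everything else is the routine Kat\v etov-style bookkeeping indicated above.
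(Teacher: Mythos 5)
The paper does not actually prove this statement; it is quoted verbatim from \cite{dikranjan}, so there is no in-paper argument to compare against. Judged on its own terms, your proposal contains a genuine gap, and it is precisely the step you defer at the end. The reduction to ``$\{\uG\}$ is a $G_\delta$'' via homogeneity and the Kat\v etov-style bookkeeping with $A=(G\setminus\{\uG\})\times\{\uG\}$ and $B=\{\,(\uG,c_n)\mid n\in\omega\,\}$ are correct, but together they prove only the implication: \emph{if} $Z^-=(G\times S)\setminus\{(\uG,\uG)\}$ is normal, \emph{then} $\psi(G)=\omega$. The hypothesis of the theorem is hereditary normality of $G$, and $Z^-$ is not a subspace of $G$, so nothing in the hypotheses bears on $Z^-$ at all. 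Worse, the missing step is essentially a restatement of the conclusion rather than a lemma that could be supplied separately: unwinding your own computation, a separation of $A$ from $B$ in $Z^-$ is equivalent to finding open $W_n\ni\uG$ in $G$ with $\bigcap_{k}\overline{\bigcup_{n\geq k}W_n}=\{\uG\}$, which already exhibits $\{\uG\}$ as a $G_\delta$; and hereditary normality of $G\times(\omega+1)$ yields, by Kat\v etov's product lemma applied to the countable non-closed set $\omega\subseteq\omega+1$, that $G$ is perfectly normal. So the ``coherent-refinement/translation'' propagation cannot be carried out from the stated hypotheses: when $\psi(G)>\omega$ the space $Z^-$ genuinely fails to be normal, and, as far as your argument goes, this is compatible with $G$ being $T_5$. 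The architecture therefore cannot reach a contradiction.

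What a correct proof must do --- and where both the group operation and the convergent sequence enter essentially --- is realize a pair of separated, non-separable sets \emph{inside} $G$ itself. The natural route: assume $\psi(G,\uG)>\omega$, so every countable intersection of neighborhoods of $\uG$ contains a point other than $\uG$. Translate the sequence so that $c_n\to\uG$ and put $P=\{\,c_n\mid n\in\omega\,\}$; for any open $U\supseteq P$ the set $\bigcap_n c_n^{-1}\cdot U$ is a countable intersection of neighborhoods of $\uG$ and hence contains some $a\neq\uG$, forcing $\{\,c_n\cdot a\mid n\in\omega\,\}\subseteq U$. One must then manufacture a single set $Q$ of such translated points $c_n\cdot a$ which meets every open set containing $P$ while still satisfying $\overline{P}\cap Q=P\cap\overline{Q}=\emptyset$; hereditary normality of $G$, applied to the subspace $P\cup Q$, then gives the contradiction. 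Arranging the separatedness of $P$ and $Q$ (choosing the witnesses $a$ inside a suitable $G_\delta$-kernel and thinning the sequence) is the actual content of the theorem, and none of it is present in your proposal.
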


\section{Convergence and scaffolds}
While convergent sequences determine the topology of a sequential
space, a more precise description of the closure operator will be
needed later. This descripion is supplied by the idea of a {\em
  scaffold}, defined below. This is not the only, or even the most
efficient way of studying the sequential closure, simply one that
suits the approach below.

\begin{defn}\label{sc} Let $X$ be a topological space, $S\subseteq
  X$ and $\St\subseteq 2^S$. Then {\em $(S,\St)$ is an
    $\alpha$-scaffold} (or simply {\em scaffold}), $\hght S$, $\hgt
  Sx$ for $x\in S$ and {\rm $\corn S$} are defined recursively as:
\begin{itemize}
  \item[\rm(S.0)] If $S = \{x\}$, where $x\in X$, and
  $\St = \{S\}$. Then $(S,\St)$ is a $0$-scaffold $\hght S = 0$, $\hgt Sx =
  0$, and $x = \corn S$.

  \item[\rm(S.1)] Suppose there exist an $x\in S$, a
  disjoint collection $\seq Un$ of open subsets of $X$, and
  $\alpha_n$-scaffolds $(S_n,\St_n)$ such that $\corn S_n\to x$, 
  $\overline{S_n}\subseteq U_n$, $x\not \in\overline{U_n}$ where $\seq \alpha n$
  is non-decreasing and $\alpha = \min\set{\beta|\beta>\alpha_n\text{ for each
      $n\in\omega$}}$. Suppose also that $\St = \{S\}\cup\seq \St n$. 
  Then $(S,\St)$ is an $\alpha$-scaffold, $\hght S = \alpha$, $\hgt Sx =
  \alpha$, $\hgt S{x'} = \hgt {S_n}{x'}$ for $x'\in S_n$ and $\corn S=x$.
\end{itemize}
\end{defn}

As one would expect, most proofs involving scaffolds proceed by
tedious induction arguments on the scaffold's height. Given a scaffold
$(S,\St)$ it will be convenient to define some subsets of $S$ and
$\St$ to simplify the notation. Put $S_{[\beta]} = \set{s\in S|\hgt
  Ss = \beta}$, $\St_{[\beta]} = \set{{\cal T}\in \St|\hght {\cal T}
  = \beta}$.

A subset $S$ of $X$ will be called a(n) ($\alpha$-)scaffold if there
exists a $\St\in2^S$ (called the {\em stratification of $S$}) such
that $(S,\St)$ is an ($\alpha$-)scaffold.

While a stratification is not unique, $\corn S$, $\hgt S x$, $\hght S$, as
well as $S_{[\beta]}$ and similar subsets, are independent of the
choice of $\St$. This is most easily observed by noting that all of
the ordinals and subsets in the list above can be expressed in terms
of the {\em Cantor-Bendixon rank} (see~\cite{kannan}).

The utility of scaffolds is illustrated by the lemma below.
\begin{lemma}\label{econv}Let $X$ be a regular space,
  $x\in[A]_\alpha\subseteq X$. Then there exists a $\beta$-scaffold
  $(S,\St)$, such that $\beta\leq\alpha$, $S\subseteq X$,
  $S_{[0]}\subseteq A$, and $x = \corn S$.
\end{lemma}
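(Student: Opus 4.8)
The plan is to prove Lemma~\ref{econv} by induction on $\alpha$, following the recursive structure of the iterated sequential closure operator and mirroring it with the recursive definition of a scaffold.

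\medskip

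\noindent\textbf{Base case.} If $\alpha = 0$, then $x\in[A]_0 = A$, and the singleton $S=\{x\}$ with $\St=\{S\}$ is a $0$-scaffold by (S.0); it satisfies $\beta = 0\le\alpha$, $S_{[0]}=\{x\}\subseteq A$, and $x=\corn S$. The limit-ordinal case is immediate: if $x\in[A]_\alpha=\bigcup_{\gamma<\alpha}[A]_\gamma$ then $x\in[A]_\gamma$ for some $\gamma<\alpha$, and the inductive hypothesis supplies a $\beta$-scaffold with $\beta\le\gamma<\alpha$.

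\medskip

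\noindent\textbf{Successor case.} Suppose $\alpha=\gamma+1$ and $x\in[A]_{\gamma+1}=[[A]_\gamma]'$. Then there is a nontrivial sequence $\langle x_n\mid n\in\omega\rangle$ in $[A]_\gamma$ with $x_n\to x$. For each $n$ the inductive hypothesis yields an $\alpha_n$-scaffold $(S_n,\St_n)$ with $\alpha_n\le\gamma$, $(S_n)_{[0]}\subseteq A$, and $\corn S_n = x_n$. I would then perform three adjustments before invoking (S.1). First, pass to a subsequence so that the heights $\langle\alpha_n\mid n\in\omega\rangle$ are non-decreasing (an $\omega$-indexed sequence of ordinals has a non-decreasing subsequence since any sequence of ordinals has a non-decreasing subsequence), and note this does not destroy convergence to $x$. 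Second, use regularity of $X$ and the convergence $\corn S_n\to x$ to thin the sequence further and pick pairwise disjoint open sets $U_n$ with $\overline{S_n}\subseteq U_n$ and $x\notin\overline{U_n}$: having already chosen $U_0,\dots,U_{n-1}$ around finitely many $\overline{S_k}$, the point $x$ has a neighborhood missing them, and regularity plus the fact that all but finitely many $x_k$ lie in that neighborhood lets us choose the next $U_n$ disjoint from the previous ones and with closure missing $x$; here one uses that each $\overline{S_n}$ is a ``small'' set clustering only at $x_n$ (formally, $\overline{S_n}\setminus S_n$ relates to $\corn S_n$, which one extracts from the scaffold definition, so $\overline{S_n}$ can be separated from $x$). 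Third, set $S=\{x\}\cup\bigcup_n S_n$ and $\St=\{S\}\cup\bigcup_n\St_n$; then (S.1) applies with $\alpha' = \min\{\delta\mid\delta>\alpha_n\text{ for all }n\}$, giving $\hght S=\alpha'\le\gamma+1=\alpha$, and $S_{[0]}=\bigcup_n(S_n)_{[0]}\subseteq A$, and $\corn S=x$, as required.

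\medskip

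\noindent\textbf{Main obstacle.} The delicate point is the simultaneous thinning in the successor step: I must extract \emph{one} subsequence that at once makes the heights non-decreasing, makes the supports $\overline{S_n}$ admit pairwise disjoint open separators, and keeps $x$ as the limit. The order matters --- first fix the monotone-height subsequence, then do the inductive geometric separation using regularity --- and one must verify that separating $\overline{S_n}$ (not merely $\{x_n\}$) from $x$ and from finitely many earlier $\overline{S_k}$ is possible; this rests on $\overline{S_n}$ being contained in an open set not containing $x$, which follows because $x\notin\overline{S_n}$ (else $x$ would be a limit point inside a scaffold of height $\le\gamma$, and a routine induction on scaffold height shows $\corn S_n$ is the unique point where $S_n$ ``piles up'' at the top, so an outside point like $x$ can be separated from $\overline{S_n}$ using regularity once we observe $S_n$ has no limit points outside $\overline{S_n}$ and $x\ne\corn S_n = x_n$ for all but finitely many $n$). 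Everything else is bookkeeping that the recursive clauses of Definition~\ref{sc} were designed to absorb.
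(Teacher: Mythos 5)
The paper states Lemma~\ref{econv} without any proof, so there is no argument of the author's to compare against; I am judging your write-up on its own terms. Your skeleton (induction on $\alpha$, trivial limit case, successor case assembling sub-scaffolds via (S.1), extraction of a non-decreasing subsequence of heights) is the standard and correct one. The problem is the step you yourself flag as the main obstacle: you justify the existence of the separating sets $U_n$ by claiming $x\notin\overline{S_n}$ on the grounds that ``$\corn S_n$ is the unique point where $S_n$ piles up at the top'' and that $\overline{S_n}\setminus S_n$ is controlled by $\corn S_n$. That claim is false. Definition~\ref{sc} constrains the closures of the \emph{members} of a stratification relative to their parents' corners, but it places no constraint on the global closure of the whole scaffold. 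Already for a $2$-scaffold $S_n$ with corner $x_n$, each constituent convergent sequence may contain finitely many ``stray'' terms placed essentially anywhere in $X$ (finitely many exceptional terms affect neither convergence nor the existence of the disjoint open sets required internally), so $\overline{S_n}$ can accumulate at $x$ --- indeed, if $A$ is dense, $\overline{S_n}$ can be all of $X$. In that case no open $U_n\supseteq\overline{S_n}$ with $x\notin\overline{U_n}$ exists and the appeal to (S.1) fails. (Your parenthetical ``$S_n$ has no limit points outside $\overline{S_n}$'' is vacuous and does not repair this.)

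The missing idea is localization: you must control where each $S_n$ lives \emph{before} attempting the separation, rather than extracting that control from the scaffold definition after the fact. Two standard repairs: (i) strengthen the inductive statement to ``for every open $V\ni x$ there is a $\beta$-scaffold $S\subseteq V$ with $\beta\leq\alpha$, $S_{[0]}\subseteq A$, $\corn S = x$,'' which goes through because $x\in[A]_\alpha$ and $V\ni x$ open imply $x\in[A\cap V]_\alpha$ (an easy auxiliary induction); then in the successor step first choose pairwise disjoint open $U_n$ around suitable points $x_{k_n}$ with $x\notin\overline{U_n}$, shrink to $W_n$ with $\overline{W_n}\subseteq U_n$ by regularity, and apply the inductive hypothesis inside $W_n$ to get $\overline{S_n}\subseteq\overline{W_n}\subseteq U_n$. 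Or (ii) keep the $S_n$ produced by the unstrengthened hypothesis and pass to proper subscaffolds via the thinning machinery (Lemma~\ref{thn} together with the open-stratification lemma, both independent of Lemma~\ref{econv}) with a neighborhood assignment $r(x_n)$ whose closure misses $x$, so that the replacement $S_n'$ satisfies $\overline{S_n'}\subseteq r(x_n)$. With either fix the rest of your bookkeeping --- disposing of the case $x\in[A]_\gamma$, the monotone-height subsequence, and the height estimate $\min\{\beta\mid\beta>\alpha_n\text{ for all }n\}\leq\gamma+1$ --- is correct.
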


If $\St$ is a stratification of
$S$ and $S'\in\St$ put $\St|_{S'}=\{\,T\in\St\mid T\subseteq S'\,\}$.
Then $S'$ is a scaffold and $(S',\St|_{S'})$ is an $\hght {S'}$-scaffold. 
For every stratification $\St$ of $S$,
the collection $\St^- = \{\,S'\in \St, S' \not= S\mid S'\subseteq
S''\in\St\Rightarrow S'' = S' \text{ or } S'' = S\,\}$ of $\subseteq$-maximal
elements of $\St\setminus\{S\}$ always satisfies (S.1) and for each
$T\in\St$ there exists a unique $T^+(\St)\in\St$ such that $T\in\St|_{T^+(\St)}^-$.

By recursively `trimming' $U_n$'s in (S.1) one can prove the following lemma.
\begin{lemma}Let $(S,\St)$ be a scaffold in some space $X$. Then there
  exists a mapping $o:\St\to\tau$, where $\tau$ is the topology on
  $X$, such that $\overline{T}\subseteq o(T)$ and $T'\subseteq T''$ if and
  only if $o(T')\subseteq o(T'')$.
\end{lemma}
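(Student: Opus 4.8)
The plan is to prove, by induction on $\hght S$, the following slightly stronger statement: for every scaffold $(S,\St)$ and every open set $V\supseteq\overline S$ there is a map $o:\St\to\tau$ with $o(S)=V$, with $\overline T\subseteq o(T)$ for all $T\in\St$, and with $T'\subseteq T''\iff o(T')\subseteq o(T'')$ for all $T',T''\in\St$. The lemma is then the special case $V=X$. Carrying the value at the maximal set $S$ as part of the inductive statement is exactly what makes the recursion work: when we descend into a sub-scaffold $(S_n,\St_n)$ we will force its top open set to lie inside $U_n\cap V$, so that the entire family of open sets attached to that branch gets ``trimmed'' into $U_n$; this is the recursive trimming alluded to above.

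For the base case (S.0) we have $S=\{x\}$, $\St=\{S\}$, and $o(S)=V$ clearly works. For the inductive step, let $(S,\St)$ arise via (S.1) from a point $x$, a disjoint family $\seq Un$ of open sets, and $\alpha_n$-scaffolds $(S_n,\St_n)$ with $\corn S_n\to x$, $\overline{S_n}\subseteq U_n$, $x\notin\overline{U_n}$ and $\St=\{S\}\cup\seq\St n$, and let $V\supseteq\overline S$ be open. Note that $x\notin S_n$ for any $n$ (since $S_n\subseteq\overline{S_n}\subseteq U_n$ and $x\notin\overline{U_n}$), so $S_n\ne S$ and hence $S\notin\St_n$; and the $\St_n$ are pairwise disjoint, since any $T\in\St_n\cap\St_m$ would satisfy $\emptyset\ne T\subseteq S_n\cap S_m\subseteq U_n\cap U_m=\emptyset$. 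Thus $\St$ is the disjoint union of $\{S\}$ and the $\St_n$, and a map on $\St$ is determined by its value at $S$ and its restrictions to the $\St_n$. For each $n$ put $V_n=U_n\cap V$; this is open and contains $\overline{S_n}$, because $\overline{S_n}\subseteq U_n$ and $\overline{S_n}\subseteq\overline S\subseteq V$. Since $\hght{S_n}=\alpha_n<\alpha=\hght S$, the inductive hypothesis applied to $(S_n,\St_n)$ with $V_n$ gives $o_n:\St_n\to\tau$ with $o_n(S_n)=V_n$, with $\overline T\subseteq o_n(T)$ for $T\in\St_n$, and with the equivalence holding inside $\St_n$. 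Now set $o(S)=V$ and $o(T)=o_n(T)$ for $T\in\St_n$.

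It remains to verify the three properties. The equality $o(S)=V$ is by construction, and $\overline T\subseteq o(T)$ holds for $T=S$ by the choice of $V$ and for $T\ne S$ by the inductive hypothesis. For the equivalence we argue by cases, using repeatedly that $T\in\St_n$ implies $T\subseteq S_n$, hence $o(T)=o_n(T)\subseteq o_n(S_n)=V_n\subseteq U_n$. If $T',T''$ lie in the same $\St_n$, the equivalence is immediate from the inductive hypothesis. If $T'\in\St_n$ and $T''\in\St_m$ with $n\ne m$, then $T'\not\subseteq T''$ and $T''\not\subseteq T'$; also $o(T')\subseteq U_n$, $o(T'')\subseteq U_m$, $U_n\cap U_m=\emptyset$, and $o(T')\supseteq T'\ne\emptyset$, $o(T'')\supseteq T''\ne\emptyset$ (scaffolds are nonempty), so neither image contains the other. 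If $T'\in\St_n$ and $T''=S$, then $T'\subseteq S$ and $o(T')\subseteq V_n\subseteq V=o(S)$, so both relations hold. If $T'=S$ and $T''\in\St_n$, then $S\not\subseteq T''$ (because $x\in S$ but $x\notin S_n\supseteq T''$), and $o(S)=V\not\subseteq U_n\supseteq o(T'')$ since $x\in V\setminus U_n$; so neither relation holds. This completes the induction. The only delicate point is the ``only if'' direction of the equivalence for sets lying in different branches $\St_n$, $\St_m$, and it is handled entirely by the disjointness of the $U_n$ together with the fact that the trimming keeps the image of each branch inside its own $U_n$ — which is precisely why the prescribed value at the top set had to be built into the inductive statement.
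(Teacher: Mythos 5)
Your proof is correct and is exactly the ``recursive trimming of the $U_n$'s'' that the paper indicates as its (omitted) argument: you strengthen the induction hypothesis to prescribe the top open set, push each branch's family into $U_n\cap V$, and use disjointness of the $U_n$ plus nonemptiness of scaffolds to get the ``only if'' direction across branches. No gaps; this matches the paper's intended approach.
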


We will call such a mapping (or, sometimes, just its set of values) {\em
an open stratification of $S$}.

The scaffold whose existence is provided by Lemma~\ref{econv} is not
always suitable for the purposes of the argument and sometimes has to be
`thinned'. The definition below makes this idea precise.

\begin{defn}
Let $S$ be an $\alpha$-scaffold, $\St$ be a stratification of $S$ and
$S'\subseteq S$. Call $S'$ {\it an $\St$-proper subscaffold\/} (or
simply {\it a proper subscaffold} if $\St$ is of no importance) of
$S$ and write $S'\leq_\St S$
if $\corn S = \corn S'$ and there exists a (unique) stratification $\St'$
of $S'$ such that $\St'^-\neq\emptyset$ whenever $\St^-\neq\emptyset$,
and each $\beta$-scaffold $B'\in\St'^-$ is an
$\St|_B$-proper subscaffold of some $\beta$-scaffold $B\in\St^-$.
\end{defn}

The lemma below introduces a general procedure for picking a
subscaffold inside a given scaffold. It has a standard inductive proof
which is therefore omitted.

\begin{lemma}\label{thn}Let $(S,\St)$ be a scaffold in some space $X$, $\tau$ be
  the topology of $X$, and $r:X\to\tau$ be a neighborhood assignment
  such that $x\in r(x)$ for every $x\in X$. Then there exists an $\St$-proper
  subscaffold $(S',\St')$ of $(S,\St)$ and its open stratification
  $o':\St'\to\tau$ such that $o'(T)\subseteq r(\corn T)$. 
  Moreover, if $o:\St\to\tau$ is any open stratification of $S$,
  the stratification $\St' = \set{o(T)\cap S'|T\in\St, o(T)\cap
    S'\neq\emptyset}$, and a subfamily of
  $\set{o(T)|T\in\St}$ forms an(other) open stratification of $S'$. 
\end{lemma}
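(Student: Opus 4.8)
The plan is to proceed by induction on $\hght S$, building $S'$ and its open stratification $o'$ simultaneously from the top down. The base case $\hght S = 0$ is immediate: $S = \{x\}$, we set $S' = S$ and $o'(S) = r(x)$, which works since $x \in r(x)$. For the inductive step, suppose $(S,\St)$ is an $\alpha$-scaffold witnessed as in (S.1) by $x = \corn S$, disjoint open sets $\seq Un$, and $\alpha_n$-scaffolds $(S_n,\St_n)$ with $\corn S_n \to x$, $\overline{S_n} \subseteq U_n$, $x \notin \overline{U_n}$. First I would apply the inductive hypothesis to each $(S_n, \St|_{S_n})$, using the neighborhood assignment $r$ restricted to $S_n$, to obtain an $\St|_{S_n}$-proper subscaffold $(S'_n, \St'_n)$ with open stratification $o_n : \St'_n \to \tau$ satisfying $o_n(T) \subseteq r(\corn T)$ for all $T \in \St'_n$. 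Crucially, $\corn S'_n = \corn S_n$, so the sequence $\corn S'_n = \corn S_n$ still converges to $x$.

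Next comes the ``trimming'': since $x \notin \overline{U_n}$ for each $n$, I can shrink each $U_n$ to an open $V_n$ with $\overline{S'_n} \subseteq \overline{S_n} \subseteq V_n \subseteq \overline{V_n} \subseteq U_n$ using regularity of $X$, and I also intersect with $r(x)$ so that additionally $\overline{V_n} \subseteq r(x) \setminus \{x\}$ can be arranged — here we use that $x \in r(x)$ and regularity lets us separate $x$ from $\overline{S_n}$ inside $r(x)$; more simply, replace $U_n$ by $U_n \cap r(x)$ first if $x$-disjointness is needed, then shrink. Because the original $U_n$ are pairwise disjoint, the $V_n$ remain pairwise disjoint. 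Now set $S' = \{x\} \cup \bigcup_n S'_n$, define the stratification $\St' = \{S'\} \cup \bigcup_n \St'_n$ (each $T \in \St'_n$ keeping its height, $S'$ getting height $\alpha$), and define $o'(S') = r(x)$ and $o'(T) = o_n(T) \cap V_n$ for $T \in \St'_n$. One then checks (S.1) holds for $(S', \St')$ with the data $x$, $\seq Vn$, $(S'_n, \St'_n)$: the convergence $\corn S'_n \to x$ and $x \notin \overline{V_n}$ are exactly what was arranged, and disjointness is preserved. The relation $\overline{T} \subseteq o'(T)$ holds at the top since $\overline{S'} \subseteq \overline{S} \subseteq$ (anything, after intersecting $r(x)$ appropriately — or one argues $\overline{S'} \subseteq r(x)$ directly from $\overline{S'} \subseteq \{x\} \cup \bigcup \overline{V_n}$ and $\overline{V_n} \subseteq r(x)$), and below by induction together with $V_n$ being chosen inside $r(x)$ when needed. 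The order-isomorphism property of $o'$ follows from that of each $o_n$ together with the fact that distinct $S'_n$ land in disjoint $V_n$'s while all are contained in $S'$, so $T' \subseteq T''$ across different blocks only through $S'$ at the top.

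For the ``moreover'' clause, given an arbitrary open stratification $o : \St \to \tau$ of $S$, I would verify that $o''(T) := o(T) \cap S'$ — reinterpreted as the trace, or rather that the sets $\{o(T) \cap S' : T \in \St,\ o(T) \cap S' \neq \emptyset\}$ — indexes the blocks of $\St'$ correctly: each $T' \in \St'$ is contained in a unique $\subseteq$-minimal $o(T)$ with $T \in \St$, and the assignment $T' \mapsto o(T)$ gives an open stratification because $o$ already respects $\subseteq$ and the $S'_n$ sit inside the corresponding blocks of the original scaffold by construction (proper subscaffolds preserve the block structure). The main obstacle I anticipate is bookkeeping the two competing requirements on $o'(T)$ — it must both refine $r(\corn T)$ and be nested according to $\subseteq$, while the trimmed $V_n$'s must stay disjoint and avoid $x$ in their closures; getting all three simultaneously requires doing the regularity shrinking and the $\cap r(x)$ / $\cap r(\corn T)$ intersections in the right order at each level. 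This is exactly the ``recursive trimming'' the paragraph preceding the lemma alludes to, and is routine but must be stated carefully; I would suppress it as ``a standard inductive argument'' matching the paper's stated intent to omit the proof.
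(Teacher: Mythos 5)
Your overall strategy --- induction on $\hght S$, applying the inductive hypothesis to each $(S_n,\St|_{S_n})$ and then shrinking the separating open sets $U_n$ --- is exactly the ``recursive trimming'' the paper alludes to and omits, and most of your bookkeeping is right. But the one step that carries the entire content of the lemma is justified incorrectly. You derive $\overline{S'}\subseteq r(x)$ from the inclusion $\overline{S'}\subseteq\{x\}\cup\bigcup_n\overline{V_n}$, and that inclusion is false in general: the closure of an infinite union need not be contained in the union of the closures, so $\overline{S'}$ may acquire ``diagonal'' limit points lying in no $\overline{V_n}$ and possibly outside $r(x)$. This is not a fussy point: the lemma is invoked after Definition~\ref{tsrs} precisely to force $\overline{S_i'}\subseteq V_i$, and the paper has to work elsewhere (Corollary~\ref{cscaffs}, under $\psi<\frak b$) to control closures of subscaffolds --- if the union-of-closures identity held, much of that effort would be unnecessary. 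Arranging $\overline{V_n}\subseteq r(x)$ for each $n$ separately only yields $S'\subseteq r(x)$, hence $\overline{S'}\subseteq\overline{r(x)}$, which is not enough.

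The repair is a small but genuinely different move: before recursing, use regularity once at the top to fix a single open $W$ with $x\in W\subseteq\overline{W}\subseteq r(x)$, discard the finitely many $S_n$ whose corners fall outside $W$, and apply the inductive hypothesis to each surviving $S_n$ with the neighborhood assignment already modified at its corner, say $r''(\corn S_n)=r(\corn S_n)\cap W\cap U_n$ (and shrunk the same way at every lower level), so that the resulting $S_n'$ satisfies $\overline{S_n'}\subseteq o_n(S_n')\subseteq W\cap U_n$. Then $S'\subseteq W$ forces $\overline{S'}\subseteq\overline{W}\subseteq r(x)$ regardless of what extra limit points the union picks up, and one may take $o'(S')=r(x)$. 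This also repairs the related slip $\overline{S_n}\subseteq V_n\subseteq r(x)$, which is impossible whenever $\overline{S_n}\not\subseteq r(x)$: it is the trimmed $S_n'$, produced under the modified assignment, that must land inside $V_n$, not $\overline{S_n}$. With that correction the rest of your argument, including the treatment of the ``moreover'' clause, goes through.
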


The following definition and the subsequent lemma describe how
scaffolds can be used to gauge the sequential order of a space.
\begin{defn}
An $\alpha$-scaffold $S$ is called {\it semiloose\/} (in $X$ where $X$
is a topological space such that $S\subseteq X$) if for every
infinite convergent sequence $C\subseteq S$ such that $C\to s\in S$
there exists an infinite subsequence $C'\subseteq C$ so that 
$\hgt Ss = \min\set{\beta|\beta>\hgt Sx\text{ for all but finitely
    many $x\in C'$}}$. 
\end{defn}

Note in the lemma below that to establish a lower bound on the
sequential order a closed scaffold is needed.
\begin{lemma}Let $X$ be a regular space, $A\subseteq X$,
  $x\in[A]_\alpha$, and $x\not\in[A]_\beta$ for any $\beta <
  \alpha$. Then there exists a semiloose $\alpha$-scaffold $(S,\St)$
  in $X$ such that $x = \corn S$ and $S_{[0]}\subseteq A$. If
  $(S,\St)$ is a closed semiloose $\alpha$-scaffold in $X$ then $\corn
  S\in[S_{[0]}]_\alpha$ and $\corn S\not\in[S_{[0]}]_\beta$ for any
  $\beta < \alpha$.
\end{lemma}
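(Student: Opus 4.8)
The plan is to prove both assertions by transfinite induction, the first on $\alpha$ and the second on $\hght S$, each mirroring clauses (S.0)--(S.1) of Definition~\ref{sc}.

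For the first assertion, note that the hypothesis is vacuous unless $\alpha$ is $0$ or a successor: if $\alpha$ were a limit then $[A]_\alpha=\bigcup_{\beta<\alpha}[A]_\beta$, so $x\in[A]_\alpha$ would already put $x$ in some $[A]_\beta$ with $\beta<\alpha$. When $\alpha=0$ take the $0$-scaffold $(\{x\},\{\{x\}\})$. When $\alpha=\gamma+1$, use $x\in[[A]_\gamma]'\setminus[A]_\gamma$ to fix a nontrivial sequence $d_n\to x$ with $d_n\in[A]_\gamma\setminus\{x\}$ and, by regularity, pairwise disjoint open sets $U_n\ni d_n$ with $x\notin\overline{U_n}$. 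If $\gamma$ is $0$ or a successor, then infinitely many $d_n$ have $[A]$-rank exactly $\gamma$ (otherwise a cofinite subsequence lies in $[A]_{<\gamma}$, forcing $x\in[A]_\gamma$); apply the induction hypothesis to those $d_n$ to get semiloose $\gamma$-scaffolds with core $d_n$ and $[0]$-level in $A$, pull each inside the corresponding $U_n$ via Lemma~\ref{thn}, and assemble them by (S.1). If $\gamma$ is a limit no $d_n$ has $[A]$-rank $\gamma$, so to keep the total height equal to $\gamma+1$ one must first manufacture, for each $n$, a (necessarily non-closed) $\gamma$-scaffold with core $d_n$ and $[0]$-level in $A$: this is done by a secondary recursion that ``stretches'' the iterated-closure tower witnessing $d_n\in[A]_\gamma$, splicing in scaffolds of heights cofinal in $\gamma$ obtained from the induction hypothesis, exploiting that the $[0]$-level of the scaffold being built may be a small subset of $A$. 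In either case the resulting $(S,\St)$ is verified semiloose by analysing an arbitrary infinite convergent $C\subseteq S$ with $C\to s\in S$: if $s\in S_n$ for some $n$ then, since $S\cap U_n=S_n$ and the $U_n$ are disjoint, $C$ is eventually inside $S_n$ and the semilooseness of $S_n$ applies; otherwise $s=\corn S$ (Hausdorffness plus $\corn S\notin\overline{U_n}$), $C$ meets each $S_n$ finitely, and the required subsequence is read off the heights of the points $C$ selects from the $S_n$ — the preliminary trimming by Lemma~\ref{thn} being precisely what stops $C$ from ``creeping along'' low-height points of the $S_n$ toward $\corn S$.

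For the second assertion, induct on $\hght S=\alpha$ with $S$ closed and semiloose. Closedness makes each top piece $S_n=S\cap U_n$ clopen in $S$, hence closed in $X$, so each $S_n$ is again a closed semiloose scaffold, and $S$ admits an open stratification $o$ with $o(T)\cap S=T$; consequently iterated sequential closures localise to the pieces $o(T)$, and in particular $[S_{[0]}]_\beta\subseteq\overline{S_{[0]}}\subseteq S$ for every $\beta$. By the induction hypothesis $\corn S_n\in[(S_n)_{[0]}]_{\alpha_n}\subseteq[S_{[0]}]_{\alpha_n}$ while $\corn S_n\notin[S_{[0]}]_\beta$ for $\beta<\alpha_n$; feeding $\corn S_n\to\corn S$ into $\alpha=\min\{\beta\mid\beta>\alpha_n\text{ for each }n\}$ gives $\corn S\in[S_{[0]}]_\alpha$ (which, with the next point, also forces $\alpha$ to be $0$ or a successor). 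For $\corn S\notin[S_{[0]}]_\beta$ with $\beta<\alpha$: were $\corn S$ in $[S_{[0]}]_\beta$ for some minimal such $\beta$, then $\corn S$ would be the limit of an infinite sequence $\langle p_j\rangle\subseteq[S_{[0]}]_{\beta-1}\subseteq S$; semilooseness yields a subsequence along which $\hgt S{\corn S}=\alpha=\min\{\zeta\mid\zeta>\hgt S{p_j}\text{ for almost all }j\}$, whereas the induction hypothesis applied to the sub-scaffold carrying each $p_j$, together with the localisation above, bounds $\hgt S{p_j}\le\beta-1<\alpha$, contradicting that equality.

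The main obstacle is the limit case of the first assertion: ensuring the scaffold one produces has height exactly $\alpha$ — not merely $\le\alpha$ as in Lemma~\ref{econv} — while staying semiloose. This is where one is pushed into using non-closed sub-scaffolds and into the delicate secondary recursion that ``pads'' the closure tower of a point of low rank up to the prescribed height, and it is also what drives all the Lemma~\ref{thn} bookkeeping in the semilooseness verification.
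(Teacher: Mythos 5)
The paper omits the proof of this lemma altogether (it is one of the ``tedious induction'' lemmas), so your argument has to stand on its own. Its successor cases are essentially sound, but the limit case --- which you correctly isolate as the crux --- contains a genuine gap in both assertions, and the ``secondary recursion'' you invoke cannot be carried out. For the first assertion with $\alpha=\gamma+1$, $\gamma$ a limit, you must root $\gamma$-scaffolds (of limit height, under the literal reading of (S.1)) at points $d_n$ whose exact $[A]$-rank is some successor $r_n<\gamma$, and you assert this can be done by ``padding'' the closure tower of $d_n$. There is no mechanism behind this: rooting a $\gamma$-scaffold at $d_n$ requires scaffolds of heights cofinal in $\gamma$ whose cores converge to $d_n$, and nothing guarantees such points exist. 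Take $X=\{x\}\cup\bigoplus_{k\geq1}S_k$, the sequential sum of iterated Arens spaces $S_k$ (top point $y_k$ of exact rank $k$ over the isolated points) attached along the convergent sequence $y_k\to x$, and let $A$ be the set of isolated points. Then $x$ has exact rank $\omega+1$, every nontrivial convergent sequence to $x$ is eventually a subsequence of $\seq yk$, and an easy induction on rank shows that $y_k$ is the core of no scaffold of height $>k$ with bottom level in $A$ (the points converging to $y_k$ have rank $<k$ and themselves carry only scaffolds of height $<k$). Hence no point converging to $x$ carries an $\omega$-scaffold, and no $(\omega+1)$-scaffold with $S_{[0]}\subseteq A$ is rooted at $x$ at all. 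The same example breaks the positive half of your second assertion: $X$ itself is a closed semiloose scaffold of height $\min\set{\beta|\beta>k\text{ for all }k}=\omega$ whose core has exact rank $\omega+1$, so $\corn S\notin[S_{[0]}]_{\hght S}$. Indeed your argument only ever yields $\corn S\in[S_{[0]}]_{\sup_n\alpha_n+1}$, which equals $[S_{[0]}]_\alpha$ precisely when the supremum is attained, and the parenthetical claim that the conclusion ``forces $\alpha$ to be $0$ or a successor'' is circular.

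The way out is to note that exact $[A]$-ranks are never limit ordinals and that the lemma is coherent only if scaffold heights share this property: the height in (S.1) must be taken to be $\sup_n\alpha_n+1$ rather than $\min\set{\beta|\beta>\alpha_n\text{ for each }n}$ (the two differ exactly when the supremum is a limit that is not attained). Under that convention your troublesome case dissolves: for $\alpha=\gamma+1$ with $\gamma$ a limit, the exact ranks $r_n$ of the $d_n$ are successors cofinal in $\gamma$ (otherwise $x\in[A]_{\sup r_n+1}\subseteq[A]_\gamma$), the induction hypothesis hands you semiloose $r_n$-scaffolds at the $d_n$ with bottom level in $A$, and assembling them via (S.1) after trimming by Lemma~\ref{thn} gives height $\sup_nr_n+1=\gamma+1$ directly --- no padding and no auxiliary non-closed scaffolds of limit height. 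The positive half of the second assertion then also goes through verbatim, since $\alpha=\sup_n\alpha_n+1$ always. You should replace the secondary recursion by this observation (and record the height convention explicitly); as written, that step is not merely unproved but unprovable.
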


$S$ is called {\it loose\/} if for every $s\in S$ there exists a
convergent sequence $C_s\subseteq S$ such that $C\subseteq^* C_s$ for
some $s\in S$ for any convergent sequence $C\subseteq S$.
Note that loose and semiloose coinside for finite $\alpha$'s. We will
not use loose scaffolds below, they are defined here merely to justify
the choice of terminology.

\section{Scaffolds in topological groups}\label{scissgm}
To a large extent, the central part of the argument below aims to
establish property (C) for sequential topological groups satisfying
some conditions. In \cite{hrusak} it is noted that (C) holds for
Fr\'echet spaces without isolated points due to a lemma in
\cite{dowbarman}. This property does not hold for arbitrary sequential
groups (a quick example is provided by the free (Abelian) topological
group over a convergent sequence) so some additional restrictions are
necessary.

The following definition is introduced to facilitate the study of
sequential order in sequential topological groups. $1$-scaffolds 
(i.e.~convergent sequences) are treated separately as they form an important special case.

\begin{defn}\label{tsrs}Let $G$ be a topological group, $C = \seq
  cn\cup\{c\}\subseteq G$ be a convergent sequence in $G$, $c_n\in
  U_n$ be disjoint open subsets of $G$, and $c\not\in\overline{U_n}$
  for any $n\in\omega$ (i.e. $\set{U_n|n\in\omega}\cup\{G\}$ is an
  open stratification of $C$). Let $(S,\St)$ be a scaffold in $G$ and
  $\St^- = \set{S_n|n\in\omega}$. Suppose $\overline{c_n\cdot S_n}\subseteq
  U_n\cdot\corn S$ for every $n\in\omega$. Define the scaffold
$$
C\otimes S = \bigcup\set{c_n\cdot S_n|n\in\omega}\cup\{c\cdot\corn
S\}
$$
Also put $C\odot S = C\otimes S\setminus\corn (C\otimes S) =
\bigcup\set{c_n\cdot S_n|n\in\omega}$.
\end{defn}

Note that the definition of $C\otimes S$ depends on the indexing of
the elements of $\St^-$ and $C$, as well as the choice of $U_n$'s. The
latter is rarely a problem since any argument involving $\otimes$'s is
usually preceded and followed by passing to appropriate
subscaffolds. In particular, in order to satisfy $\overline{c_n\cdot S_n}\subseteq
  U_n\cdot\corn S$, observe that $c_n\cdot\corn S\in U_n\cdot\corn S$,
  and construct open nested $V_n$'s such that $\corn S\in V_n$ and
  $\overline{c_n\cdot V_n}\subseteq U_n\cdot\corn S$. Now pick increasing
  $n(i)\in\omega$ so that $\corn S_{n(i)}\in V_i$. `Thin' each
  $S_{n(i)}$ using Lemma~\ref{thn} to obtain proper subscaffolds
  $S_i'\subseteq S_{n(i)}$ such that $\overline{S_i'}\subseteq
  V_i$. Put $S' = \bigcup\set{S_i'|i\in\omega}\cup\{\corn S\}$. Now
  $C\otimes S'$ is defined. Note that for every proper subscaffold
  $S''\subseteq S'$ of $S'$ defined as above, the product $C\otimes
  S''$ is also defined. 

The indexing dependence can be mitigated by requiring
that both factors be ordered in the type of $\omega$ and the products
be taken `in order'. The definition of proper subscaffold can be
adjusted to inherit the order, as well.

The next lemma shows that $\otimes$ does not introduce new convergent
sequences.

\begin{lemma}\label{tsr1}Let $G$ be a topological group, $(S,\St)$ and $C$ be as
  in Definition~\ref{tsrs}. Let $C' = \seqn {c_{n(i)}\cdot
    s_i}i\omega\subseteq C\odot S$ be a convergent sequence. Then
  $\seq si$ is a convergent sequence.
\end{lemma}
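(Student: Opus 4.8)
The plan is to argue by contradiction: suppose $\seq si$ is not convergent (where we may of course pass to a subsequence and assume it has no convergent subsequence, or that it has two subsequences converging to different points). The key structural fact I would exploit is that $C' \to c_m \cdot s$ for some $m \in \omega$ and some $s \in S_m$ — indeed, since the open sets $U_n$ witnessing the open stratification of $C$ are disjoint and $c \notin \overline{U_n}$, any convergent subsequence of $C \odot S = \bigcup_n c_n \cdot S_n$ must, from some point on, either lie inside a single $c_n \cdot S_n$ (because $\overline{c_n \cdot S_n} \subseteq U_n \cdot \corn S$ and the $U_n \cdot \corn S$ are disjoint with limit point behavior controlled by $C$), or its limit is $c \cdot \corn S$ and the indices $n(i) \to \infty$. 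So first I would split into these two cases.

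In the first case, where cofinitely many $n(i)$ equal a fixed $m$, the sequence $C'$ is eventually $\seqn{c_m \cdot s_i}{i}{\omega}$ inside $c_m \cdot S_m$, converging to some $c_m \cdot s \in c_m \cdot S_m$. Multiplying by $c_m^{-1}$ (a homeomorphism of $G$) gives $s_i \to s$, so $\seq si$ converges, contradicting our assumption. In the second case, $n(i) \to \infty$ and $C' \to c \cdot \corn S$; here I would use that left translation by $\corn S$, or rather the continuity of multiplication together with $\corn S_n \to c$'s analog, forces $s_i \to \corn S$: precisely, $c_{n(i)}^{-1} \cdot (c_{n(i)} \cdot s_i) = s_i$, and since $c_{n(i)} \to c$ while $c_{n(i)} \cdot s_i \to c \cdot \corn S$, continuity of the group operations yields $s_i = c_{n(i)}^{-1}\cdot(c_{n(i)}\cdot s_i) \to c^{-1}\cdot(c\cdot \corn S) = \corn S$. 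Either way $\seq si$ converges.

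The main obstacle I anticipate is justifying rigorously the dichotomy in the first paragraph — namely that a convergent sequence drawn from $\bigcup_n c_n \cdot S_n$ cannot "spread out" across infinitely many of the pieces $c_n \cdot S_n$ while still converging to a point inside one fixed $c_m \cdot S_m$. This is where the hypotheses $\overline{c_n \cdot S_n} \subseteq U_n \cdot \corn S$, the disjointness of the $U_n$, and $c \notin \overline{U_n}$ all come into play: the closures $\overline{U_n \cdot \corn S}$ are pairwise disjoint and "converge" to $c \cdot \corn S$ only in the sense dictated by $C$, so any point of $c_m \cdot S_m$ has a neighborhood meeting only finitely many of the $c_n \cdot S_n$. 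I would isolate this as a short preliminary observation (essentially that $\set{U_n \cdot \corn S \mid n \in \omega} \cup \{G\}$ behaves like an open stratification of $C \cdot \corn S$, which it does by construction), and then the two cases above dispatch the rest by routine continuity arguments. A minor technical point to handle is extracting, at the very start, a single subsequence of $C'$ along which the index function $i \mapsto n(i)$ is either eventually constant or strictly increasing; this is automatic by pigeonhole.
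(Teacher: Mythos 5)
Your argument is correct; the paper states Lemma~\ref{tsr1} without proof, and your two-case analysis (the index function $i\mapsto n(i)$ is either eventually constant or tends to infinity, which is forced because the sets $U_n\cdot\corn S$ are pairwise disjoint open sets containing the closures $\overline{c_n\cdot S_n}$), followed in the first case by translation by $c_m^{-1}$ and in the second by continuity of $(x,y)\mapsto x^{-1}\cdot y$ applied to $c_{n(i)}\to c$, is precisely the routine argument being left to the reader. One minor imprecision: you assert that the closures $\overline{U_n\cdot\corn S}$ are pairwise disjoint, which neither follows from the disjointness of the $U_n$ nor is a hypothesis --- but your proof never actually needs it, since $\overline{c_n\cdot S_n}\subseteq U_n\cdot\corn S$ together with the disjointness of the open sets $U_n\cdot\corn S$ already gives the dichotomy and the identification of the tail of $C'$ with a subset of a single $c_m\cdot S_m$.
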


The remarks following Definition~\ref{tsrs} fully apply to the general
case defined below. Disambiguating measures suggested there
(e.g.~ordering of $S$ and $S_n$'s) are assumed to be taken but are not
explicitly mentioned.

\begin{defn}\label{tsr}Let $G$ be a topological group. Let $(S,\St)$ be an
  $\alpha$-scaffold, $\alpha>1$, and
  $\set{(S_n,\St_n)|n\in\omega}$ be a countable collection of
  scaffolds in $G$, the sequence of $\hght {S_n}$'s is
  nondecreasing and $\set{\corn S_n|n\in\omega} = \{x\}$ for some $x\in G$. Let also
  $\set{C_n|n\in\omega}$ list all $C_n\in\St$ such that $\hght {C_n} =
  1$. Suppose $C_n\otimes S_n$
  are defined for every $n\in\omega$ and the natural (i.e.~taken from
  some open stratification of $S$) choice of open $U_m^n$ as
  in Definition~\ref{tsrs}. Put
$$
S\otimes\set{S_n|n\in\omega} = (S\setminus S_{[0]})\cdot
x\cup\bigcup\set{C_n\odot S_n|n\in\omega}
$$
\end{defn}

The next lemma will not be used explicitly in what follows. Rather it
presents an induction hypothesis that can be used to justify the claim
that $\otimes$-products result in scaffolds in the general case.

\begin{lemma}\label{tsrd}Let $(S',\St')$ and $(S'',\St'')$ be scaffolds
  and $\hght{S'}\leq\hght{S''}$. Let $\set{S_n|n\in\omega}$ be a
  family of (ordered) scaffolds so that the sequence of $\hght{S_n}$'s is
  nondecreasing. Let $\seq mi\subseteq\omega$ and $\seq ni\subseteq\omega$ be arbitrary
    increasing sequences so that both $S'\otimes\set{S_{n_i}|i\in\omega}$ and
  $S''\otimes\set{S_{m_i}|i\in\omega}$ are defined. Then both
    $S'\otimes\set{S_{n_i}|i\in\omega}$ and
    $S''\otimes\set{S_{m_i}|i\in\omega}$ are scaffolds and
    $\hght{S'\otimes\set{S_{n_i}|i\in\omega}}\leq\hght{S''\otimes\set{S_{m_i}|i\in\omega}}$. Moreover, 
$\hght{S'\otimes\set{S_{n_i}'|i\in\omega}} =
    \hght{S'\otimes\set{S_{n_i}|i\in\omega}}$ whenever
    $\set{S_n'|n\in\omega}$ is a sequence of proper subscaffolds of $S_n$'s.
\end{lemma}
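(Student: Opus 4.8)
The plan is to prove Lemma~\ref{tsrd} by a simultaneous induction on the pair $(\hght{S'},\hght{S''})$, or more precisely on $\hght{S''}$ with a secondary induction handling all $S'$ of height $\leq\hght{S''}$, since the operation $S\otimes\{S_n\}$ unwinds through the $1$-scaffolds $C_n\in\St$ and the subscaffolds $S_n$ of strictly smaller height along them. First I would set up notation: fix open stratifications $o$, $o'$, $o''$ witnessing the scaffold structure, let $\St^-=\{C_n : \hght{C_n}=1\}\cup\{\text{higher parts}\}$ be the maximal proper substratification, and recall from Definition~\ref{tsr} that $S\otimes\{S_n\} = (S\setminus S_{[0]})\cdot x \cup \bigcup_n (C_n\odot S_n)$. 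The first task is to verify that this set really is a scaffold: its core is $(\corn S)\cdot x$, and one must exhibit a disjoint family of open sets and component scaffolds satisfying (S.1). The natural component scaffolds are the $C_n\otimes S_n$ (which are scaffolds by Lemma~\ref{tsr1}'s setup and, inductively, by the claim for lower heights) attached along the copies of the higher parts of $\St^-$ translated by $x$; disjointness and the non-accumulation condition $\corn S\cdot x\notin\overline{U_m^n\cdot x}$ come from the open stratification $o$ of $S$ after translation, exactly as in the remarks following Definition~\ref{tsrs}.

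The height computation is the heart of the matter. By (S.1), $\hght{S\otimes\{S_n\}}$ is the least ordinal strictly above all the heights of the attached components. The components of $S'\otimes\{S_{n_i}\}$ are, on one hand, the translates by $x$ of the height-$<\hght{S'}$ pieces of $\St'^-$ that are \emph{not} $1$-scaffolds (these have unchanged height $<\hght{S'}$), and on the other hand the pieces $C_i'\otimes S_{n_i}$ replacing each $1$-scaffold $C_i'\in\St'^-$. For the latter, note $C_i'\otimes S_{n_i}$ is a scaffold whose core has height $1+\hght{S_{n_i}}=\hght{S_{n_i}}$ (for $\hght{S_{n_i}}$ infinite) or $\hght{S_{n_i}}+1$ otherwise — wait, more carefully: attaching the convergent sequence $C_i'$ on top of $S_{n_i}$ via $\otimes$ adds exactly one level above $\hght{S_{n_i}}$, so $\hght{C_i'\otimes S_{n_i}}=\hght{S_{n_i}}+1$. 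Since the $\hght{S_n}$'s are nondecreasing and $\seq ni$, $\seq mi$ are increasing, and $\hght{S'}\leq\hght{S''}$, each $1$-scaffold in $\St'^-$ is matched (in order) with a $1$-scaffold in $\St''^-$ carrying an $S_{m_j}$ of height $\geq$ the corresponding $S_{n_i}$, and every non-$1$ component of $S'$ of height $\gamma<\hght{S'}\leq\hght{S''}$ is dominated by a component of $S''$ of height at least $\gamma$ (using that $\St''$ has components cofinal below $\hght{S''}$, which follows from (S.1) for $S''$). Taking suprema and the least strict upper bound yields $\hght{S'\otimes\{S_{n_i}\}}\leq\hght{S''\otimes\{S_{m_i}\}}$.

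The "moreover" clause — invariance under passing to proper subscaffolds $S_n'\leq S_n$ — follows from the observation that $\hght{S_n'}=\hght{S_n}$ and $\corn{S_n'}=\corn{S_n}$ by definition of proper subscaffold, together with the inductive claim that $\hght{C_i'\otimes S_{n_i}'}=\hght{C_i'\otimes S_{n_i}}$ (both equal $\hght{S_{n_i}}+1$), so the multiset of component heights feeding into (S.1) is unchanged, hence so is the least strict upper bound; the non-$1$ components of $\St'^-$ are untouched. The main obstacle I anticipate is bookkeeping the height arithmetic cleanly across the limit/successor distinction for $\alpha=\hght S$ and correctly aligning the \emph{indexing} of the $1$-scaffolds $C_n\in\St$ with the $S_n$'s under the increasing reindexings $\seq ni$, $\seq mi$ — this is exactly where the "ordered scaffolds" convention and the nondecreasing-heights hypothesis must be deployed, and a sloppy induction hypothesis here would fail to give the inequality rather than an equality. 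I would therefore state the induction hypothesis to cover all quadruples $(S',S'',\seq ni,\seq mi)$ with $\hght{S''}$ below the current value, making the one-step (S.1) unwinding purely mechanical.
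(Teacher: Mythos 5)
The paper states Lemma~\ref{tsrd} without proof (it is described as ``an induction hypothesis that can be used to justify the claim that $\otimes$-products result in scaffolds''), so your proposal can only be judged on its own terms --- and as written it rests on two misreadings of the $\otimes$ operation that break the height bookkeeping. First, $C\otimes S$ is a \emph{diagonal} product, not a stacking of $C$ on top of $S$: by Definition~\ref{tsrs} its maximal components are the translates $c_n\cdot S_n$ of the elements $S_n\in\St^-$, whose cores $c_n\cdot\corn S_n$ converge to $c\cdot\corn S$. Hence by (S.1) its height is the least ordinal strictly above all $\hght{S_n}$, which is exactly $\hght S$, not $\hght S+1$. (Sanity check: if $S$ is itself a convergent sequence, $C\otimes S$ is again a convergent sequence, a $1$-scaffold, whereas your formula predicts a $2$-scaffold.) Second, and more seriously, your decomposition of $S'\otimes\set{S_{n_i}|i\in\omega}$ is wrong: the elements $T\in\St'^-$ that are not $1$-scaffolds are \emph{not} carried over as translates $T\cdot x$ of unchanged height. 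Only $(T\setminus T_{[0]})\cdot x$ survives; every $1$-scaffold of $\St'$ lying inside $T$ (Definition~\ref{tsr} enumerates \emph{all} $1$-scaffolds of $\St'$, not just the maximal ones) has its bottom level replaced by a $C_n\odot S_n$. So the component corresponding to $T$ is the recursive product $T\otimes\set{S_n|C_n\in\St'|_T}$, whose height must itself be computed by the induction and is in general far larger than $\hght T$.

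Because of this, your induction unwinds the wrong recursion and produces provably false values: e.g.\ for a $3$-scaffold $S'$ with all $S_n$ of height $\omega$, each $2$-scaffold component of $\St'^-$ becomes a scaffold of height $\omega+1$ and the product has height $\omega+2$, whereas your scheme (translated components of height $2$, no maximal $1$-scaffolds) yields height $3$ --- smaller than the heights of the factors $S_n$, which is absurd. The correct argument has to be an induction on $\hght{S'}$ in which the inductive step compares, component by component along $\St'^-$ and $\St''^-$, the recursively transformed heights, using that the base case ($\hght{}=1$, where $\hght{C\otimes S_n}=\hght{S_n}$) feeds a nondecreasing sequence of heights upward and that the least strict upper bound of an infinite subsequence of a nondecreasing ordinal sequence does not depend on the subsequence (this is where the hypotheses that the $\hght{S_n}$ are nondecreasing and the index sequences are increasing are actually used, and it also yields the ``moreover'' clause, since proper subscaffolds preserve cores and component heights). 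Your overall plan --- induction on heights plus an in-order matching of components --- is the right shape, but the two structural errors above mean the proof as written does not go through.
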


The lemma and the corollary that follow express the idea that one
can only raise the height by following a convergent sequence in a
scaffold.

\begin{lemma}Let $(S,\St)$ be a scaffold in some space $X$. Let $C =
  \seq cn\subseteq S$ be a converging sequence in $S$ such that
  $c_n\to c\in S$ and $c_n\in T_n\in\St$ where $T_n$'s are
  disjoint. Then $c = \corn T$ for some $T\in\St$ and $T$ contains all
  but finitely many $T_n$'s.
\end{lemma}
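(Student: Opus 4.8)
The plan is to argue by induction on $\hght S$, following the recursive structure of Definition~\ref{sc}. As a preliminary remark, note that the hypotheses already force $\seq cn$ to be an injective sequence: if $n\neq n'$ then $c_n\in T_n$, $c_{n'}\in T_{n'}$ and $T_n\cap T_{n'}=\emptyset$, so $c_n\neq c_{n'}$; in particular $C$ is infinite. Consequently the base case (S.0), in which $\St=\{S\}$ consists of a single point, is vacuous, since there is no disjoint family $\seq Tn$ inside such a $\St$.

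For the inductive step suppose $\hght S=\alpha>0$ and fix a presentation of $(S,\St)$ as in (S.1): a point $x=\corn S$, pairwise disjoint open sets $\seq Un$, and scaffolds $(S_n,\St_n)$ with $\corn S_n\to x$, $\overline{S_n}\subseteq U_n$, $x\notin\overline{U_n}$, $\St=\{S\}\cup\seq\St n$, and $\hght{S_n}=\alpha_n<\alpha$, where $\St^-=\set{S_n|n\in\omega}$ and $S\setminus\{x\}$ is partitioned by the $S_n$. If $c=x=\corn S$, take $T=S$: then $\corn T=c$, and since every member of $\St$ is a subset of $S$, $T$ contains all the $T_n$, so we are done. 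Otherwise $c\neq\corn S$, so $c$ belongs to exactly one block $S_m$ (the $S_n$, being contained in the disjoint $U_n$, are pairwise disjoint). From $\overline{S_n}\subseteq U_n$, the disjointness of the $U_n$, and $x\notin U_m$, one gets $U_m\cap S=S_m$; as $U_m$ is an open neighborhood of $c$ and $c_n\to c$, all but finitely many $c_n$ lie in $S_m$. For each such $n$ the stratum $T_n$ meets $S_m$; since at most one $T_n$ can equal $S$ (disjointness again), for all but finitely many $n$ we have $T_n\in\St_k$ for some $k$, hence $T_n\subseteq S_k$, and then $c_n\in T_n\cap S_m$ forces $k=m$. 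Discarding the finitely many exceptional indices yields an infinite convergent subsequence of $C$ lying in $S_m$, converging to $c\in S_m$, whose terms sit in pairwise disjoint members of $\St_m$. Applying the inductive hypothesis to the scaffold $(S_m,\St_m)$, whose height $\alpha_m$ is strictly below $\alpha$, produces $T\in\St_m\subseteq\St$ with $\corn T=c$ and $T\supseteq T_n$ for all but finitely many of the retained indices, hence for all but finitely many $n$.

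I do not anticipate a real obstacle here: this is a routine transfinite induction that uses only the branching data of (S.1) together with the elementary fact that a convergent sequence eventually enters every neighborhood of its limit---no looseness, closedness, or group structure is needed. The points that require a little care are the two successive ``all but finitely many'' passages (first pushing the sequence into the single block $S_m$, then deleting the at most one index with $T_n=S$), and the observation that disjointness of the family $\seq Tn$ passes to subfamilies, so that the retained tail genuinely satisfies the hypotheses of the lemma relative to $(S_m,\St_m)$. It is also worth recording at the outset, from (S.1), that $\hgt S{x'}<\alpha$ for every $x'\in S\setminus\{\corn S\}$; this aligns the dichotomy ``$c=\corn S$''/``$c\neq\corn S$'' with ``$\hgt Sc=\hght S$''/``$\hgt Sc<\hght S$'' and confirms it is exhaustive.
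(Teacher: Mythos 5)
Your argument is correct: the paper states this lemma without proof (it is one of the routine facts the author leaves to the reader), and your transfinite induction on $\hght S$ --- splitting on whether $c=\corn S$ or $c$ falls into a unique block $S_m$ of the (S.1) decomposition, using $U_m\cap S=S_m$ to push a tail of $C$ and of the $T_n$'s into $(S_m,\St_m)$ --- is exactly the intended standard argument. The bookkeeping (injectivity of $\seq cn$ from disjointness, vacuity of the (S.0) case, discarding the at most one index with $T_n=S$) is all handled correctly, so there is nothing to add.
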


\begin{corollary}\label{mooff}Let $(S,\St)$ and
  $\set{(S_n,\St_n)|n\in\omega}$ be such that
  $T = S\otimes\set{S_n|n\in\omega}$ is well-defined. Let $C = \seq
  ci\subseteq S\otimes\set{S_n|n\in\omega}$ be a convergent sequence
  such that $c_i\in C_{n_i}\odot S_{n_i}$ (here we reuse the
  notation from Definition~\ref{tsr}) for increasing $n_i$'s and
  $c_i\to c\in T$. Then $c = s\cdot x$ where $x$ is the same as in
  Definition~\ref{tsr} and $\hgt Ss\geq2$.
\end{corollary}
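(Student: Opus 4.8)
The plan is to prove Corollary~\ref{mooff} as a direct consequence of the preceding lemma (on raising heights by following convergent sequences), applied inside the scaffold $T = S\otimes\set{S_n|n\in\omega}$. First I would recall from Definition~\ref{tsr} the precise structure of $T$: it is the union of the ``collapsed'' copy $(S\setminus S_{[0]})\cdot x$ together with the pieces $C_n\odot S_n = \bigcup\set{c^n_m\cdot (S_n)_m|m\in\omega}$ coming from each height-$1$ scaffold $C_n\in\St$. The open stratification of $S$ that was used to define the products supplies a disjoint family of open sets separating the $C_n\odot S_n$ (via the $U^n_m$'s of Definition~\ref{tsrs}) and also separating them inside $T$ from the top part $(S\setminus S_{[0]})\cdot x$. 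So each $c_i\in C_{n_i}\odot S_{n_i}$ lies in a distinct member of $\St_T$ (the stratification of $T$), and the $n_i$ being increasing means these members are pairwise disjoint.

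Next I would apply the previous lemma to the convergent sequence $C = \seq ci$ in $T$: since the $c_i$ lie in disjoint members $T_i$ of the stratification of $T$ with $c_i \to c \in T$, the lemma yields that $c = \corn T'$ for some $T' \in \St_T$ and that $T'$ contains all but finitely many of the $T_i = $ (the member of $\St_T$ containing $c_i$). Now $T_i$ for large $i$ is contained in some $C_{n_i}\odot S_{n_i}$, hence in the corresponding open set $U^{n_i}_{?}\cdot x \subseteq$ the part of $T$ coming from $C_{n_i}$; because $T'$ must contain infinitely many of these and the $C_n\odot S_n$'s sit in pairwise disjoint opens, $T'$ cannot be contained in any single $C_n\odot S_n$, so $T'$ must be (a proper subscaffold lying in) the ``top'' part $(S\setminus S_{[0]})\cdot x$. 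In particular $c = s\cdot x$ for some $s \in S\setminus S_{[0]}$, i.e.\ $\hgt Ss \geq 1$.

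To upgrade $\hgt Ss \geq 1$ to $\hgt Ss \geq 2$, I would argue that $s$ cannot have height exactly $1$ in $S$. If $\hgt Ss = 1$ then $s = \corn C_m$ for some $m$, and the corresponding member of $\St_T$ at the core point $s\cdot x$ is exactly what $C_m\otimes S_m$ contributes — but in $C_m\otimes S_m$, by construction $\corn(C_m\otimes S_m) = c\cdot\corn S_m$ sits \emph{above} the sequences $c^m_j\cdot\corn S_{m,j}$, and the convergent sequence $\seq ci$ would then have to be (mod finite) a single converging sequence inside $C_m\odot S_m$ approaching $s\cdot x$, forcing all but finitely many $n_i$ to equal $m$, contradicting that the $n_i$ are increasing. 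Hence $\hgt Ss \geq 2$, and since $T = S\otimes\set{S_n}$ places $(S\setminus S_{[0]})\cdot x$ with the same height structure as $S$ (this is part of Lemma~\ref{tsrd}'s bookkeeping), the core $s\cdot x$ indeed has $\hgt Ss \geq 2$ as a point of $S$.

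The main obstacle I expect is the bookkeeping in the second paragraph: carefully matching up the members of the stratification $\St_T$ of the product scaffold with the $C_n\odot S_n$ pieces and the top part $(S\setminus S_{[0]})\cdot x$, and verifying that the $U^{n}_m$'s from the open stratification genuinely separate these pieces inside $T$ so that the ``disjoint $T_n$'s'' hypothesis of the preceding lemma is met. Everything else is a routine application of that lemma plus the observation that increasing $n_i$ prevents the limit from landing at a height-$1$ core of $T$. I would keep the argument short by invoking Lemma~\ref{tsrd} for the claim that $\otimes$ preserves the height structure on the top part, rather than re-deriving it.
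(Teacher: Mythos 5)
Your proof is correct and follows essentially the route the paper intends: the corollary is stated without proof as a direct consequence of the preceding lemma, applied to the stratification of $T = S\otimes\set{S_n|n\in\omega}$, with the increasing $n_i$'s ruling out both a limit inside a single block $C_n\odot S_n$ and a limit at a height-$1$ core. Your elaboration of the bookkeeping (disjointness of the blocks via the $U^n_m$'s, and the exclusion of $\hgt Ss=1$ because $T'=C_m\otimes S_m$ would force $n_i=m$ eventually) matches the intended argument.
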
 

Recall that $\frak b$ is the smallest cardinality of an unbounded
family in $\omega^\omega$. The lemma and the corollary below are
probably folklore, however, the author could not find a reference for
the general form of this fact. For a proof of a similar statement
about 2-scaffolds, see, for example~\cite{noguratanaka}.

\begin{lemma}\label{cscaff}Let $(S,\St)$ be an $\alpha$-scaffold, 
  $\cal U$ be a collection of open subsets of
  $X$ such that $|U| < \frak b$. Then there exists an $S'\leq_{\St}S$
  such that every open neighborhood $U\in\cal U$ of $\corn S'$
  contains all but finitely many $T\in\St'^-$, where $\St'$ is the
  stratification of $S'$ that witnesses $S'\leq_{\St}S$.
\end{lemma}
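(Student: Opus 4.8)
\textbf{Proof proposal for Lemma~\ref{cscaff}.}

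The plan is to induct on the height $\alpha$ of the scaffold $(S,\St)$, choosing the proper subscaffold $S'$ level by level so that the finitely-many-exceptions clause is achieved at every node of the stratification tree simultaneously. First I would dispose of the base case $\alpha=0$ (where $S$ is a single point and $\St'^-=\emptyset$, so there is nothing to check) and then isolate the key successor step. Write $\corn S = x$ and $\St^- = \set{S_n|n\in\omega}$ with open stratification $o$; for each $U\in\cal U$ with $x\in U$ we need $o(S_n)\subseteq U$ — equivalently $\corn S_n\in$ some fixed neighborhood of $x$ contained in $U$ — for all but finitely many $n$. Since $\corn S_n\to x$, for each such $U$ the set $\set{n|o(S_n)\not\subseteq U}$ is finite; but $|\cal U|$ may be infinite (only bounded by $\frak b$), so a single subsequence need not work for all $U$ at once. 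This is where $\frak b$ enters: enumerate the relevant $U$'s as $\set{U_\xi|\xi<\lambda}$ with $\lambda<\frak b$, let $g_\xi(k)$ be large enough that $o(S_n)\subseteq U_\xi$ for all $n\geq g_\xi(k)$ that will survive the thinning (here $k$ indexes how much we have already thinned), and use the fact that a family of fewer than $\frak b$ functions in $\omega^\omega$ is bounded to find a single increasing $h$ dominating all the $g_\xi$'s; then keep only the subscaffolds $S_{n(i)}$ along a sequence growing fast enough relative to $h$. Each retained $S_{n(i)}$ is then thinned by the inductive hypothesis applied to the collection $\cal U$ restricted to (a shrunk copy of) its neighborhood — one must check $|\cal U|<\frak b$ is inherited, which is immediate — so that all of its own upper-level inclusions also hold mod finite.

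The main bookkeeping point is to verify that the object assembled this way, $S' = \bigcup\set{S_{n(i)}'|i\in\omega}\cup\{x\}$ with $S_{n(i)}'\leq_{\St|_{S_{n(i)}}}S_{n(i)}$ the subscaffolds produced by induction, genuinely satisfies $S'\leq_\St S$: the required stratification $\St'$ is $\{S'\}\cup\bigcup_i\St_{n(i)}'$, its maximal proper part $\St'^-$ is $\set{S_{n(i)}'|i\in\omega}$, which is nonempty exactly when $\St^-$ is, and each $S_{n(i)}'$ is by construction an $\St|_{S_{n(i)}}$-proper subscaffold of $S_{n(i)}\in\St^-$; the coring and height conditions $\corn S'=\corn S=x$ and the recursive height equalities are preserved because thinning does not move cores. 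The only subtlety is that passing to the fast subsequence $n(i)$ must be compatible with the open stratification: after selecting the $n(i)$, reassign $o'(S_{n(i)}') = o(S_{n(i)})\cap(\text{the shrinking neighborhoods of }x)$ exactly as in the remarks after Definition~\ref{tsrs} and in Lemma~\ref{thn}, so that nestedness and the containments $\overline{S_{n(i)}'}\subseteq o'(S_{n(i)}')$ still hold.

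I expect the actual obstacle to be notational rather than mathematical: keeping the two thinning operations — (i) dropping to a $\frak b$-fast subsequence of the top-level children so that every $U\in\cal U$ eventually contains them, and (ii) inductively thinning each surviving child so its own $\cal U$-relativized clause holds — from interfering with each other, and making sure the "all but finitely many" at the top level is not destroyed when the children are later shrunk (it is not, since shrinking a child only shrinks $o'(S_{n(i)}')$, and we already arranged $o(S_{n(i)})\subseteq U$). One should also note, as the statement implicitly requires, that the neighborhoods of $\corn S'$ in $\cal U$ are the only ones constrained — for $U\in\cal U$ with $\corn S'\notin U$ there is nothing to prove — which is what lets the induction go through cleanly. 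Everything else is the routine induction on height that the paper elsewhere suppresses, so I would state the successor step carefully and then write ``the general case now follows by an easy induction on $\hght S$.''
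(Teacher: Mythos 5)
There is a genuine gap here, and it is mathematical rather than notational. Your successor step rests on the claim that, since $\corn S_n\to x$, for each $U\in{\cal U}$ with $x\in U$ the set $\{\,n\mid o(S_n)\not\subseteq U\,\}$ is finite (together with the parenthetical assertion that $o(S_n)\subseteq U$ is ``equivalent'' to $\corn S_n$ lying in some fixed neighborhood of $x$). Neither is true: convergence of the cores only puts $\corn S_n$ into $U$ eventually, while the sets $S_n$ and the open sets $o(S_n)$ of the given stratification are fixed and may each contain points far outside $U$ for \emph{every} $n$ (already for a $2$-scaffold whose constituent sequences have long initial segments before settling near their own limit points). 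Because of this, the cardinal ${\frak b}$ is spent in the wrong place: you dominate thresholds on the top-level index $n$ in order to select a subsequence $S_{n(i)}$ of children, but no selection of which children to keep can force a child into $U$ --- for that one must discard finitely many of the \emph{child's own} constituents, and how many must be discarded depends on both $n$ and $U$. (Note also that the finite exceptional set in the conclusion is allowed to depend on $U$, so even if your premise held there would be nothing for ${\frak b}$ to do at the top level.)

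The correct shape of the argument, and the one the paper uses, is: first apply the inductive hypothesis to each child $S_n$ with the \emph{same} family ${\cal U}$, so that for every $U\in{\cal U}$ containing $\corn S_n$ all but finitely many members $S^n_m$ of the (modified) family $\St|_{S_n}^-$ satisfy $S^n_m\subseteq U$; this makes the functions $f_U(n)=$ ``least $m_0$ such that $\corn S_n\in U$ implies $S^n_m\subseteq U$ for $m>m_0$'' well defined. Then use $|{\cal U}|<{\frak b}$ to find a single $f$ dominating $\{\,f_U\mid U\in{\cal U}\,\}$ and set $S'_n=\bigcup\{\,S^n_m\mid m>f(n)\,\}\cup\{\corn S_n\}$, keeping \emph{all} top-level indices $n$. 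For $U\ni\corn S$ one then gets $S'_n\subseteq U$ for every $n$ past the point where $\corn S_n\in U$ and $f(n)>f_U(n)$, which is exactly the desired statement about $\St'^-$. Your proposal does invoke the inductive hypothesis on each retained child, but only to conclude that its grandchildren lie in $U$ ``mod finite''; it never upgrades this to containment of the whole trimmed child in $U$, and without dominating the $f_U$'s --- functions of the top-level index $n$, not of ``how much we have already thinned'' --- that upgrade is precisely what is missing. The bookkeeping in your second paragraph about $\leq_\St$, cores, and open stratifications is fine and matches the paper; it is the analytic core of the successor step that needs to be redone.
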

\begin{proof}
Suppose the statement is true for all $\beta$-scaffolds with
$\beta<\alpha$. Let $\St^- = \seq Sn$ and $\St|_{S_n}^-
= \seq {S^n} m$. 

Modify $(S_n,\St|_{S_n})$ if necessary using the inductive hypothesis, 
and for each $U\in\cal U$ construct a function $f_U:\omega\to\omega$ such that $\corn
S_n\in U$ implies $S^n_m\subseteq U$ for $m > f_U(n)$. Let
$f:\omega\to\omega$ be a function that dominates $\{\,f_U\mid
U\in{\cal U}\,\}$. Put $\St'=\bigcup \{\,\St|_{S^n_m}\mid m >
f(n)\,\}\cup \seq {S'} n\cup S'$ where $S'_n = \bigcup \{\,S^n_m\mid m >
f(n)\,\}\cup\{\corn S_n\}$ and $S' = \bigcup S'_n\cup\{\corn S\}$. It
is straightforward to check that $\St'$ is a stratification of $S'$
that witnesses $S'\leq_{\St}S$ and that $\St'^- = \seq {S'} n$.

Let now $\corn S\in U\in\cal U$ and pick an $n'\in\omega$ be such that $f(n)
> f_U(n)$ and $\corn S_n\in U$ for all $n > n'$. If $n > n'$ it
follows from the definition of $S'_n$ that $S'_n\subseteq U$. 
\end{proof}

\begin{corollary}\label{cscaffs}Let $Y$ be a regular space, $X\subseteq Y$ be such
  that $\psi(X) < \frak b$, $(S,\St)$ be an $\alpha$-scaffold. 
  Then there exists an $S'\leq_{\St}S$ closed in $X$.
\end{corollary}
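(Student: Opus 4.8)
The plan is to apply Lemma~\ref{cscaff} with a carefully chosen family $\cal U$ of open sets, and then argue that the resulting proper subscaffold is automatically closed in $X$. First I would fix, using the definition of $\psi(X)$, for each point $y\in X$ a family ${\cal U}_y$ of open subsets of $Y$ with $|{\cal U}_y|\le\psi(X)<\frak b$ and $\bigcap\set{U\cap X|U\in{\cal U}_y}=\{y\}$; we may assume each $U\in{\cal U}_y$ is an open neighborhood of $y$, and (shrinking with regularity of $Y$) that the closures behave well. The relevant point is that every point $y\in X$ other than a given $y_0$ is separated from $y_0$ by some member of ${\cal U}_{y_0}$. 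I would then let ${\cal U} = \bigcup\set{{\cal U}_{\corn T}|T\in\St}$; since $\St$ is countable (a scaffold has a countable stratification at each level, and by induction on height the whole of $\St$ is countable) and $\frak b$ is uncountable, $|{\cal U}|<\frak b$, so Lemma~\ref{cscaff} applies and yields $S'\leq_\St S$ with stratification $\St'$ such that every $U\in{\cal U}$ containing $\corn{S'}$ contains all but finitely many members of ${\St'}^-$, and (applying the lemma at every level, which the inductive structure of its proof already does) the same holds for the cores of the maximal subscaffolds inside each $T\in\St'$.

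Next I would verify that such an $S'$ is closed in $X$. Suppose $y\in X\setminus S'$; I want a neighborhood of $y$ missing $S'$. The natural approach is induction on $\hght{S'}$. If $\hght{S'}=0$ then $S'$ is a singleton and closedness is trivial (using $T_1$). For the inductive step, write ${\St'}^- = \seq {S'}n$ with cores $x_n = \corn{S'_n}\to x_0 = \corn{S'}$, and open stratification sets $U_n\supseteq\overline{S'_n}$ with $x_0\notin\overline{U_n}$. Given $y\in X\setminus S'$: if $y=x_0$ is impossible since $x_0\in S'$; if $y\in U_n$ for some (necessarily unique) $n$, then $y\notin S'_n$ and by the inductive hypothesis applied to $S'_n$ (which is an $\hght{S'_n}$-scaffold, closed in $X$ by the induction since it too inherits the ${\cal U}$-property) there is a neighborhood $W$ of $y$ with $W\cap S'_n=\emptyset$; intersecting $W$ with $U_n$ (which meets only $S'_n$ among the pieces and does not contain $x_0$) gives a neighborhood of $y$ disjoint from $S'$. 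The remaining case is $y\notin\bigcup_n\overline{U_n}$ and $y\ne x_0$: here I use the ${\cal U}$-property. Pick $U\in{\cal U}_{x_0}$ with $y\notin U\cap X$, equivalently $y\notin U$ since $y\in X$; wait, we need $y\notin\overline{U}$, so instead pick $U\in{\cal U}_{x_0}$ with $y\notin U$ and then use regularity of $Y$ to find open $W\ni y$ with $\overline{W}\cap U = \emptyset$ — actually it is cleaner to arrange from the start that ${\cal U}_{x_0}$ is closed under taking a slightly smaller neighborhood with closure inside, so that $y\notin U$ yields an open $V\ni y$ with $V\cap\overline{U}=\emptyset$. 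By the conclusion of Lemma~\ref{cscaff}, $U$ contains all but finitely many $S'_n$, say all with $n\ge N$. Then $V' = V\cap\bigcap_{n<N}(Y\setminus\overline{U_n})$ is a neighborhood of $y$ (using $y\notin\overline{U_n}$ for each $n$) that misses every $S'_n$ with $n<N$ (they lie in $\overline{U_n}$) and misses every $S'_n$ with $n\ge N$ (they lie in $U$, disjoint from $V$), and misses $x_0$ as well; hence $V'\cap S'=\emptyset$.

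The main obstacle I expect is the bookkeeping around \emph{closures}: Lemma~\ref{cscaff} only guarantees that neighborhoods from $\cal U$ \emph{contain} the small subscaffolds, not that their complements do, so to turn "$y$ is separated from $\corn{S'}$ by a $\pi$-character-type family" into "$y$ has a neighborhood missing $S'$" one must interpose a regularity step, and this has to be set up uniformly at every level before invoking Lemma~\ref{cscaff} (i.e.\ the family ${\cal U}_y$ should already consist of neighborhoods whose closures shrink appropriately, so that the single application of the lemma serves all levels). A secondary, more routine point is confirming that the stratification $\St$ of any scaffold is countable — this follows by induction on height from the fact that ${\St}^-$ is indexed by $\omega$ in each (S.1) step — which is what keeps $|{\cal U}|<\frak b$. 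Finally I would note that the hypothesis "$Y$ regular" is exactly what licenses the shrinking arguments, and that $\psi(X)<\frak b$ (rather than $\pi\chi<\frak b$) suffices precisely because separating $y$ from a single point $\corn T$ is a pseudocharacter condition.
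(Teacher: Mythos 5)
Your argument is in substance the paper's argument: invoke Lemma~\ref{cscaff} with a family $\cal U$ witnessing $\psi(X,\corn S)<\frak b$, arranged via regularity of $Y$ so that the \emph{closures} of its members separate $\corn S$ from every other point of $X$, and then observe that a putative point of $X\cap\overline{S'}\setminus S'$ lies outside $\overline{U}$ for some $U\in{\cal U}$, hence must be a limit point of one of the finitely many $T'\in\St'^-$ not contained in $U$, whence closedness at the lower level finishes. The organizational difference is where the induction lives. The paper inducts in the corollary itself: by the inductive hypothesis each element of $\St^-$ may be assumed already closed in $X$ (and a proper subscaffold of a scaffold closed in $X$ is closed in $X$), so only the single, top-level conclusion of Lemma~\ref{cscaff} --- exactly as stated --- is needed, with $\cal U$ a pseudocharacter family at $\corn S$ alone. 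You instead feed the union $\bigcup\set{{\cal U}_{\corn T}|T\in\St}$ into one application of the lemma and appeal to its conclusion ``at every level''; that is a strengthening of what Lemma~\ref{cscaff} asserts (its statement speaks only of $\corn{S'}$ and $\St'^-$), and although its proof does propagate the property downward, you would need to restate the lemma to use it this way --- the paper's arrangement avoids the issue entirely. One further small hole: your case analysis in the closedness verification is not exhaustive, since a point $y\in\overline{U_n}\setminus\bigcup_m U_m$ falls into neither of your cases; it is handled by noting $\overline{S'_n}\subseteq U_n$, so $y\notin\overline{S'_n}$, and intersecting finitely many such complements. Your closing observation that $\psi$ rather than $\pi\chi$ is the right hypothesis, and that countability of $\St$ keeps $|{\cal U}|<\frak b$, are both correct.
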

\begin{proof}
Suppose the statement is true for $\beta$-scaffolds such that
$\beta<\alpha$, so assume that $S$ and its stratification $\St$ are such
that each element of $\St^-$ is closed in $X$. Note that a proper
subscaffold of a scaffold closed in $X$ is also closed in $X$.

Using the regularity of $Y$ and $\psi(X,\corn S) < \frak b$ pick a
family $\cal U$ of open subsets of $Y$ such that $|\cal
U| < \frak b$, $\corn S\in U$ for every $U\in\cal U$ and
$\bigcap\{\,\overline U^Y \mid U\in{\cal U}\,\}\cap X = \{\corn S\}$.
Apply Lemma~\ref{cscaff} to construct
$S'\leq_{\St}S$ and $\St'$. 

Suppose $\overline{S'}\ni x\in X\setminus S'$ for some $x\in X$. Let
$U\in\cal U$ be such that $\corn S\in U\subseteq\overline U\subseteq
Y\setminus\{x\}$. Since all but finitely many elements of $\St'^-$ are
subsets of $U$, $x\in\overline {T'}$ for some $T'\in \St'^-$. Now
$T'\subseteq T$ for some $T\in \St^-$. This (and $S'\leq_{\St}S$) implies that $T'$ is
closed in $X$ so $x\in T'\subseteq S'$ contrary to the assumption above.
\end{proof}

\begin{lemma}\label{cvr}Let $S$ be an $\alpha$-scaffold and let $\cal A$ be a
  cover of $S_{[0]}$. Then either there is an $A_S\in{\cal A}$ such that $\overline{A_S\cap
    S_{[0]}}\ni \corn S$ or there exists a countable family ${\cal
    A}_S\subseteq[{\cal A}]^\omega$ such that any ${\cal
    A}'\subseteq{\cal A}$ with the property $|{\cal A}'\cap {\cal
    A}''| = \omega$ for every ${\cal A}''\in{\cal A}_S$ satisfies
  $\overline{\bigcup\{\,S_{[0]}\cap B\mid B\in{\cal A}'\,\}}\ni\corn S$.
\end{lemma}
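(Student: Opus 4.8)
The statement is a dichotomy about covers of the bottom layer $S_{[0]}$ of an $\alpha$-scaffold, and the natural approach is induction on $\hght S = \alpha$, mirroring the inductive template used throughout this section (Lemmas \ref{cscaff}, \ref{cscaffs}). The base case $\alpha = 0$ is trivial: $S_{[0]} = S = \{\corn S\}$, so any $A_S\in{\cal A}$ containing $\corn S$ works. For the inductive step, fix a stratification $\St$ of $S$ and write $\St^- = \seq Sn$, with $\corn S_n\to\corn S$ and disjoint open $U_n\supseteq\overline{S_n}$ witnessing (S.1). Note that $S_{[0]} = \bigsqcup_n (S_n)_{[0]}$, so $\cal A$ restricts to a cover ${\cal A}\rst S_n$ of each $(S_n)_{[0]}$, and each $S_n$ has height $<\alpha$, so the inductive hypothesis applies to every $S_n$.

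First I would apply the inductive hypothesis to each $S_n$ separately. This splits $\omega$ into two sets: $N_0 = \set{n|\text{there is }A_n\in{\cal A}\text{ with }\corn S_n\in\overline{A_n\cap(S_n)_{[0]}}}$, and $N_1 = \omega\setminus N_0$, on which we get countable families ${\cal A}_{S_n}\subseteq[{\cal A}]^\omega$ with the stated hitting property. If $N_0$ is infinite, I would try to produce the first alternative or the second alternative directly: along $n\in N_0$ the sets $A_n$ each have $\corn S_n$ in the closure of $A_n\cap(S_n)_{[0]}$; since $\corn S_n\to\corn S$, for any ${\cal A}'$ meeting $\set{A_n|n\in N_0}$ infinitely (this single set being the sole member of a part of ${\cal A}_S$ in this case, together with witnessing neighborhoods) one gets points of $\bigcup\set{S_{[0]}\cap B|B\in{\cal A}'}$ converging to $\corn S$ — here one uses that $U_n$ separates the $S_n$, so a choice of one point near $\corn S_n$ from inside $U_n$ for infinitely many $n$ converges to $\corn S$ by regularity and $\corn S_n\to\corn S$. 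If instead $N_0$ is finite, discard those finitely many $n$ and assume $N_1 = \omega$; then set ${\cal A}_S = \bigcup_n {\cal A}_{S_n}\cup\set{\set{A_n|n\in\omega}:\text{choice functions}}$ — more carefully, I would let ${\cal A}_S$ consist of all the ${\cal A}''\in{\cal A}_{S_n}$ (countably many, reindexed as a single countable family) together with one extra countable set $\set{A^{(n)}|n\in\omega}$ where $A^{(n)}\in{\cal A}$ is any member with $(S_n)_{[0]}\cap A^{(n)}\neq\emptyset$ chosen to "track" $\corn S_n$. Given ${\cal A}'$ with $|{\cal A}'\cap{\cal A}''| = \omega$ for every ${\cal A}''\in{\cal A}_S$, the hitting condition against each ${\cal A}''\in{\cal A}_{S_n}$ forces (by the inductive conclusion applied within $S_n$) that $\overline{\bigcup\set{(S_n)_{[0]}\cap B|B\in{\cal A}'}}\ni\corn S_n$, and then the hitting condition against the extra family forces this to happen for infinitely many $n$; picking one point $p_n\in U_n\cap\bigcup\set{S_{[0]}\cap B|B\in{\cal A}'}$ close to $\corn S_n$ for infinitely many such $n$ yields a sequence converging to $\corn S$, giving $\corn S\in\overline{\bigcup\set{S_{[0]}\cap B|B\in{\cal A}'}}$.

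The main obstacle, and the place requiring genuine care, is bookkeeping the countable family ${\cal A}_S$ so that a single hitting condition "$|{\cal A}'\cap{\cal A}''| = \omega$ for all ${\cal A}''\in{\cal A}_S$" simultaneously forces (i) the inductive conclusion inside infinitely many $S_n$, and (ii) that the indices $n$ for which this succeeds form an infinite set on which one can extract a convergent sequence. The subtlety is that the inductive hypothesis for $S_n$ requires hitting \emph{all} of ${\cal A}_{S_n}$, so ${\cal A}_S$ must be arranged so that hitting it entails hitting each ${\cal A}_{S_n}$ for all but finitely many $n$ — this is delicate because there are infinitely many $n$ and for each, infinitely many sets to hit, yet ${\cal A}_S$ must stay countable; the resolution is that we are allowed to conclude $\corn S\in\overline{\cdots}$ as soon as \emph{infinitely many} $S_n$ contribute, so I only need the hitting of ${\cal A}_S$ to guarantee success in infinitely many $S_n$, not all, and for that a diagonal enumeration of $\bigcup_n{\cal A}_{S_n}$ suffices (hitting it infinitely meets infinitely many ${\cal A}_{S_n}$ in an infinite set — one must check this combinatorial point, possibly strengthening the extra family). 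The regularity of $X$ enters only at the very end to turn "infinitely many points, one in each $U_n$, with $\corn S_n\to\corn S$" into genuine convergence to $\corn S$, using $\corn S\notin\overline{U_n}$ to ensure the chosen points eventually leave every neighborhood's complement.
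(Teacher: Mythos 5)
Your overall strategy is the paper's: induct on the height, apply the inductive hypothesis to each member of $\St^-$, and split according to which alternative it returns. The base case, the treatment of the case where infinitely many $S_n$ return a single witness $A_n$ (if one value $B$ repeats for infinitely many $n$ then the first alternative holds for $S$; if the $A_n$ take infinitely many distinct values then the second alternative holds with the one-member family $\{\{\,A_n\mid n\in N_0\,\}\}$ — note you do need this pigeonhole split, since $\{\,A_n\mid n\in N_0\,\}$ must be an element of $[{\cal A}]^\omega$ and could otherwise be finite), and the final passage from $\corn S_n\to\corn S$ to $\corn S\in\overline{\phantom{x}\cdots\phantom{x}}$ all match the paper's argument.

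The one point to repair is the passage you flag as the ``main obstacle'': it is not an obstacle, and your proposed resolution would fail. The family ${\cal A}_S$ is allowed to be any \emph{countable subfamily of $[{\cal A}]^\omega$}, so in the case $N_1=\omega$ you may simply take ${\cal A}_S=\bigcup_n{\cal A}_{S_n}$, the union of the ${\cal A}_{S_n}$ \emph{as families} — a countable union of countable collections of infinite subsets of ${\cal A}$, hence still countable. The hypothesis that $|{\cal A}'\cap{\cal A}''|=\omega$ for every ${\cal A}''\in{\cal A}_S$ then quantifies over every member of every ${\cal A}_{S_n}$, so the inductive conclusion $\corn S_n\in\overline{\bigcup\{\,(S_n)_{[0]}\cap B\mid B\in{\cal A}'\,\}}$ holds for \emph{every} $n$, not merely infinitely many, and $\corn S_n\to\corn S$ finishes the proof; no extra ``tracking'' family is needed. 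By contrast, the variant you float — replacing $\bigcup_n{\cal A}_{S_n}$ by a single diagonal enumeration, i.e.\ flattening it into one infinite subset of ${\cal A}$ — genuinely breaks: a family ${\cal A}'$ can meet that diagonal set in an infinite set entirely contained in one ${\cal A}''\in{\cal A}_{S_0}$, thereby missing all the other sets that the inductive hypotheses require you to hit. Keep the union as a family of sets of sets and the argument closes exactly as in the paper.
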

\begin{proof}
Suppose the statement is true for all $\beta$-scaffolds such that
$\beta<\alpha$. For each $T\in\St^-$ pick either an $A_T\in \cal A$
such that $\corn T\in\overline{T_{[0]}\cap A_T}$ or a countable family ${\cal
  A}_T\subseteq [{\cal A}]^\omega$ such that any ${\cal
  A}'\subseteq{\cal A}$ with the property $|{\cal A}'\cap{\cal
  A}''|=\omega$ for every ${\cal A}''\in{\cal A}_T$ satisfies
$\overline{\bigcup\{\,T_{[0]}\cap B\mid B\in{\cal A}'\,\}}\ni\corn T$.

If there is a single $B$ such that $B = A_T$ for infinitely many $T$'s
as above, put $A_S = A_T$. If there are infinitely many different
$A_T$'s with the property above put ${\cal A}_S = \{\,\{\,A_T\mid
T\in\St^-\text{ and $A_T$ is defined as above }\,\}\,\}$. Otherwise
put ${\cal A}_S = \bigcup\{\,{\cal A}_T\mid T\in\St^-\text{ and ${\cal
    A}_T$ is defined as above }\,\}$.
\end{proof}

Recall that a continuous map $p:X\to Y$ is called {\em hereditarily
  quotient} if $p$ is quotient (i.e.~$U\subseteq X$ is open if and
only if $p^{-1}(U)$ is open) and for any $A\subseteq X$, the
restriction $p|_{p^{-1}(A)}:p^{-1}(A)\to A$ is also quotient. It is
straightforward that every open map is hereditarily quotient. The
following lemma presents a well known property of such maps.

\begin{lemma}[\cite{kannan}] Let $p:X\to Y$ be a hereditarily quotient
  map. Then $\so(Y)\leq\so(X)$.
\end{lemma}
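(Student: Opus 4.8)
The plan is to show that the sequential order cannot increase under a hereditarily quotient map, by transporting the witnessing structure for a high sequential closure rank from $Y$ back to $X$. Fix $B\subseteq Y$ and a point $y\in[B]_\alpha^Y$; the goal is to produce a point in the preimage direction that lands in an iterated sequential closure of rank at most $\alpha$, so that $\so(X)\ge\alpha$ whenever $\so(Y)>\beta$ fails to contain $y$ at level $\beta$. Concretely, I would prove by induction on $\alpha$ the statement: for every $A\subseteq X$ and every $y\in\bigl[p(A)\bigr]_\alpha^Y$ there is an $x\in p^{-1}(y)$ with $x\in[A]_\alpha^X$. Applying this with $A$ chosen so that $p(A)=B$ (e.g.\ $A=p^{-1}(B)$, using that $p$ is onto its image or restricting to $p[X]$) then gives $\overline{B}=[p(A)]_{\omega_1}\subseteq p\bigl([A]_{\omega_1}\bigr)=p(\overline A)$, and more precisely $[p(A)]_\alpha\subseteq p([A]_\alpha)$ for all $\alpha$, which immediately yields $\so(Y)\le\so(X)$.

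The base case $\alpha=1$ is the heart of the matter and is exactly where hereditary quotientness (rather than mere quotientness) is used. Suppose $y\in[p(A)]'$, so there is a sequence $\seq yn$ in $p(A)$ with $y_n\to y$; I want a point $x\in p^{-1}(y)$ and a sequence in $A$ converging to it. Set $Z=\{y\}\cup\{\,y_n\mid n\in\omega\,\}$ and consider the restriction $q=p\rst p^{-1}(Z):p^{-1}(Z)\to Z$, which is quotient because $p$ is hereditarily quotient. In the subspace $Z$ the set $\{\,y_n\mid n\in\omega\,\}$ is not closed (its closure contains $y$), hence $q^{-1}\bigl(\{\,y_n\mid n\in\omega\,\}\bigr)$ is not closed in $p^{-1}(Z)$, so there is a point $x$ in its closure outside it; necessarily $q(x)=y$, i.e.\ $x\in p^{-1}(y)$, and $x\in\overline{q^{-1}(\{y_n\})}$. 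Using regularity of $X$ and passing to the points of $A$ sitting over the $y_n$ (recall $y_n\in p(A)$), one extracts a sequence in $A$ converging to $x$: since $X$ is regular and $x$ has pseudocharacter witnessed by neighborhoods whose closures avoid cofinitely many of the disjoint pieces $q^{-1}(y_n)$, a standard diagonal argument picks $a_{n_k}\in A\cap p^{-1}(y_{n_k})$ with $a_{n_k}\to x$. Hence $x\in[A]'$.

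For the inductive step, assume the claim for all $\beta<\alpha$. Take $y\in[p(A)]_\alpha$. If $\alpha$ is a limit, then $y\in[p(A)]_\beta$ for some $\beta<\alpha$ and we are done by induction. If $\alpha=\gamma+1$, then $y\in\bigl[[p(A)]_\gamma\bigr]'$, so there is a sequence $y_n\to y$ with $y_n\in[p(A)]_\gamma$. By the inductive hypothesis, for each $n$ pick $x_n\in p^{-1}(y_n)$ with $x_n\in[A]_\gamma^X$. Now apply the base-case argument to the set $A'=\{\,x_n\mid n\in\omega\,\}\subseteq[A]_\gamma$ and the sequence $y_n\to y$: since $p(A')\supseteq\{\,y_n\mid n\in\omega\,\}$ is a set whose closure in $Y$ contains $y$, the quotient-map argument on $p^{-1}(\overline{\{y\}\cup\{y_n\}})$ produces $x\in p^{-1}(y)\cap\overline{A'}$, and regularity upgrades this to a genuine convergent sequence from $A'$, so $x\in[A']'\subseteq[[A]_\gamma]'=[A]_\alpha^X$. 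This closes the induction, and taking $\alpha=\so(X)$ shows every set in $Y$ reaches its closure by stage $\so(X)$, i.e.\ $\so(Y)\le\so(X)$.

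\textbf{Main obstacle.} The delicate point is the base case: turning ``$x$ lies in the closure of $q^{-1}(\{y_n\})$'' into ``some sequence from that preimage set actually converges to $x$.'' Mere membership in the closure does not give a convergent sequence in an arbitrary space, so one must genuinely use that $X$ is regular together with the fact that the relevant preimage set decomposes into the disjoint fibres $q^{-1}(y_n)$, each of which is closed-in-$p^{-1}(Z)$ and ``far'' from $x$ because the $y_n$ are far from $y$ in $Z$; a careful nested-neighborhood construction then extracts the sequence. One must also be slightly careful about the case where infinitely many $y_n$ equal $y$ (trivial) versus the genuinely non-trivial case, and about indexing when several $y_n$ coincide.
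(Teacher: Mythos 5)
Your overall plan rests on the inductive claim $[p(A)]_\alpha\subseteq p([A]_\alpha)$, i.e. on pulling iterated sequential closures \emph{back} through $p$ level by level, and this claim is false already at $\alpha=1$. Take $X=S_2$, the Arens space with points $\{x\}\cup\{x_n\mid n\in\omega\}\cup\{x_{n,m}\mid n,m\in\omega\}$, $x_{n,m}\to x_n$, $x_n\to x$; take $Y=\omega+1$ a convergent sequence $\{y_n\}\cup\{y\}$; and let $p$ collapse each column $\{x_n\}\cup\{x_{n,m}\mid m\in\omega\}$ to $y_n$ and send $x$ to $y$. This $p$ is hereditarily quotient (every open set containing a fibre maps onto a neighborhood of the corresponding point of $Y$), $X$ is regular, and for $A=\{x_{n,m}\mid n,m\in\omega\}$ one has $y\in[p(A)]'$ while $p([A]')=\{y_n\mid n\in\omega\}\not\ni y$, since no sequence from $A$ converges to $x$ — that is the defining feature of the Arens space. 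This is precisely the step you flag as the ``main obstacle'': here the fibres $p^{-1}(y_n)$ are disjoint, closed, and ``far'' from $x$, and $x$ does lie in the closure of their union, yet no diagonal choice $a_{n_k}\in A\cap p^{-1}(y_{n_k})$ converges to $x$. Regularity cannot rescue this; the failure of ``closure point $\Rightarrow$ sequential limit'' is exactly what makes $\so(X)>1$ possible in the first place.

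The lemma is true, but the containment you need goes in the opposite direction and requires no transfinite pullback. Hereditarily quotient (equivalently, pseudo-open) maps satisfy $\overline{B}\subseteq p\bigl(\overline{p^{-1}(B)}\bigr)$ for every $B\subseteq Y$: apply quotientness of $p$ restricted over the subspace $B\cup\{y\}$ once, at the level of \emph{full} closures, where ``not closed'' is all that is needed and no convergent sequence in $X$ has to be produced. Continuity alone gives the forward inclusion $p([A]_\beta)\subseteq[p(A)]_\beta$ by a trivial induction, since a convergent sequence maps to a convergent sequence. Combining the two: for $y\in\overline{B}$ pick $x\in p^{-1}(y)\cap\overline{p^{-1}(B)}$; sequentiality of $X$ gives $x\in[p^{-1}(B)]_{\so(X)}$, and pushing forward yields $y\in[B]_{\so(X)}$. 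Together with the standard fact that a quotient of a sequential space is sequential, this gives $\so(Y)\leq\so(X)$. (The paper itself states this lemma with a citation to Kannan and gives no proof, so there is nothing to compare against beyond the standard argument just sketched.)
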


Since the natural quotient map from a group onto its quotient is open,
we have that the sequential order of a group cannot be raised by
taking a quotient. Note that for groups there is a more direct proof of this
result, by `lifting' every scaffold in $G/N$ to $G$.

\begin{corollary}\label{loord}Let $G$ be a sequential topological group, $N\subseteq G$
  be a closed normal subgroup of $G$. Then $\so(G/N)\leq\so(G)$.
\end{corollary}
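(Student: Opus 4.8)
The plan is to derive Corollary~\ref{loord} directly from the preceding lemma about hereditarily quotient maps, rather than re-proving anything from scratch. First I would recall that if $N\subseteq G$ is a closed normal subgroup of a topological group $G$, then the canonical projection $p:G\to G/N$ is a continuous open surjection: openness holds because for open $U\subseteq G$ the preimage $p^{-1}(p(U))=U\cdot N=\bigcup_{n\in N}U\cdot n$ is a union of open sets, hence open, so $p(U)$ is open by definition of the quotient topology. Then I would invoke the remark in the excerpt that every open map is hereditarily quotient, so $p$ is in particular a hereditarily quotient map.

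With $p$ hereditarily quotient, the lemma \cite{kannan} immediately gives $\so(G/N)\le\so(G)$, which is exactly the statement. The only genuinely substantive ingredient — the inequality $\so(Y)\le\so(X)$ for hereditarily quotient $p:X\to Y$ — is already established earlier in the paper, so nothing further is needed. I would write this in two or three sentences.

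If instead one wanted the ``more direct'' group-theoretic argument hinted at in the excerpt, the plan would be to take any $\alpha$-scaffold $(S,\St)$ in $G/N$ witnessing $\so(G/N)\ge\alpha$ and lift it to $G$: choose for each point of $S$ a preimage in $G$ (using that $p$ is onto), and lift the open stratification using that $p$ is open and continuous so that closures and disjointness of the neighborhoods $U_n$ are preserved under $p^{-1}$; a convergent sequence $\corn S_n\to x$ in $G/N$ lifts, after passing to a subsequence and using local compactness-free arguments via openness, to a convergent sequence in $G$, so the lifted data again satisfies (S.0)--(S.1) with the same heights. This shows any scaffold height realized in $G/N$ is realized in $G$, hence $\so(G/N)\le\so(G)$.

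The main obstacle in the direct approach is ensuring the lifted sequence genuinely converges in $G$ (fibers of $p$ need not be compact, so a naive choice of preimages may fail to converge); this is handled by exploiting openness of $p$ to pull back a neighborhood base appropriately, but it is fiddly — which is precisely why routing through the hereditarily-quotient lemma is the clean route, and the one I would present.
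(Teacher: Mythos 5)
Your proposal is correct and follows exactly the route the paper takes: the canonical projection $G\to G/N$ is open (your computation $p^{-1}(p(U))=U\cdot N$ is the standard one), open maps are hereditarily quotient, and the cited lemma of Kannan then yields $\so(G/N)\leq\so(G)$. The alternative ``lifting'' argument you sketch is precisely the one the paper mentions in passing without carrying out, so no further comparison is needed.
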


\begin{lemma}\label{ored}Let $G$ be a sequential topological group, $\so(G) <
  \omega_1$, and every scaffold in $G$ have a closed proper
  subscaffold. Let $Y\subseteq G$ be a subgroup, $X = U\cap Y$ for
  some open $U\subseteq G$. Suppose for
  every semiloose $\beta$-scaffold $S\subseteq G$, where
  $\beta\leq\so(G)$ there exists a semiloose $\beta$-scaffold $S'$ in
  $G$ such that $S'\subseteq Y$. Then for every $g\in \overline{X}$ there exists
  a sequence of points of $X$ converging to $g$.
\end{lemma}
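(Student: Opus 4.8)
The plan is to argue by contradiction: suppose $g\in\overline X$ but no sequence from $X$ converges to $g$. Since $G$ is sequential and $\so(G)<\omega_1$, there is a least $\alpha$ with $g\in[X]_\alpha$, and by hypothesis $\alpha\geq 2$ (if $\alpha=1$ we would have a sequence from $X$ converging to $g$). Translating by $g^{-1}$ we may assume $g=\uG$. Apply the lemma producing a semiloose $\alpha$-scaffold $S\subseteq G$ with $\corn S=\uG$ and $S_{[0]}\subseteq X$; using the hypothesis that every scaffold in $G$ has a closed proper subscaffold (together with the fact that proper subscaffolds of semiloose scaffolds can be taken semiloose, and the ``lower bound'' half of the semiloose lemma), we may assume $S$ is closed and semiloose, so that $\uG\in[S_{[0]}]_\alpha$ but $\uG\notin[S_{[0]}]_\beta$ for $\beta<\alpha$.

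Next I would invoke the hypothesis that semiloose $\beta$-scaffolds in $G$ can be replaced by semiloose $\beta$-scaffolds lying inside $Y$. The idea is to do this recursively along the stratification of $S$: replace $S$ itself, and each $S_n\in\St^-$, and so on down to the bottom, by semiloose scaffolds of the same height sitting inside $Y$ and, after intersecting with the open set $U$ (which contains $\uG$ and hence, after trimming the open stratification via Lemma~\ref{thn} so that the top-level open set lies inside $U$, contains a cofinal tail of the relevant pieces), inside $X=U\cap Y$. Because $\uG\in U$ and the scaffold pieces at each level have cores converging to their parent's core, all but finitely many children at each level are swallowed by any fixed neighborhood of the parent core; this lets the intersection with $U$ be absorbed harmlessly at every level. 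The output of this construction is a semiloose $\alpha$-scaffold $\widetilde S$ with $\corn{\widetilde S}=\uG$ and $\widetilde S_{[0]}\subseteq X$, and again (shrinking to a closed proper subscaffold, which stays semiloose) a \emph{closed} semiloose $\alpha$-scaffold.

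Now the second half of the semiloose lemma applies to $\widetilde S$: since it is closed and semiloose with $\corn{\widetilde S}=\uG$, we get $\uG\in[\widetilde S_{[0]}]_\alpha$ and $\uG\notin[\widetilde S_{[0]}]_\beta$ for every $\beta<\alpha$. But $\widetilde S_{[0]}\subseteq X$, so $\uG\in[X]_\alpha\setminus[X]_{\beta}$ for all $\beta<\alpha$; in particular $\uG\in\overline X$ is witnessed at level exactly $\alpha\geq 2$. To derive the contradiction I would then peel off one level: inside the closed semiloose $\alpha$-scaffold $\widetilde S\subseteq X$, the cores of the top-level pieces $T\in\St^-$ form a sequence converging to $\uG$, and each such core lies in $[\widetilde S_{[0]}]_{\alpha-1}\subseteq[X]_{\alpha-1}$ (or the appropriate limit statement when $\alpha$ is a limit). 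Chasing this down with the semiloose property and Corollary~\ref{mooff}-style reasoning about where heights can be raised should show that $\uG$ is in fact reached by a convergent sequence whose members lie in $X$, contradicting the assumption.

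The main obstacle I expect is the recursive ``push into $Y$ and then intersect with $U$'' step: one must verify that replacing scaffolds level-by-level by $Y$-scaffolds of equal height can be done \emph{coherently}, so that the pieces still fit together as a scaffold (the containments $\overline{S_n}\subseteq U_n$, $x\notin\overline{U_n}$ of (S.1) must be re-established after each replacement, presumably via Lemma~\ref{thn} and the trimming remarks following Definition~\ref{tsrs}), and that semilooseness is preserved through the whole recursion. The absorption of $U$ is comparatively routine once one notes $\uG\in U$ and uses convergence of cores to parents, but the bookkeeping that keeps the construction inside $[X]_\alpha\setminus\bigcup_{\beta<\alpha}[X]_\beta$ — i.e.\ that no height is lost or gained — is where the real work lies.
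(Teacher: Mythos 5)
There is a genuine gap here --- in fact two. First, the recursive ``push into $Y$'' step is not supported by the hypothesis: the assumption only provides, for each semiloose $\beta$-scaffold in $G$, \emph{some} semiloose $\beta$-scaffold contained in $Y$, with no control whatsoever over its corner or its location in $G$. Replacing the pieces of $\St$ level by level with such scaffolds therefore cannot be done coherently: the corners of the replacement pieces need not converge to the corners of their parents, so the containments of (S.1) cannot be re-established and no scaffold with corner $g$ results. You flag this as ``the main obstacle,'' but it is not a bookkeeping issue --- it is unresolvable in the form you propose. The paper's proof gets around it using the group structure: the $Y$-scaffolds are normalized to have corner $\uG$ (possible since $Y$ is a subgroup) and are attached to $S$ via the $\otimes$-product of Definition~\ref{tsr}, i.e.\ translated by the points of the level-one convergent sequences of $S$; those points lie in $S_{[0]}\subseteq X\subseteq Y$, so the translated bottoms stay in $Y$ and, after trimming into $U$, in $X$.

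Second, and more seriously, your endgame does not produce a contradiction. A closed semiloose $\alpha$-scaffold $\widetilde S$ with $\corn \widetilde S=g$ and $\widetilde S_{[0]}\subseteq X$ certifies precisely that $g\in[X]_\alpha\setminus\bigcup_{\beta<\alpha}[X]_\beta$ --- which is the situation you assumed, not its negation --- and ``peeling off one level'' only exhibits a sequence of points of positive height (not points of $X$) converging to $g$. The paper's contradiction comes from a different mechanism: the scaffolds attached below level one are chosen with heights $\alpha_n\to\so(G)$, and their semilooseness forces every point of $T\setminus X$ to require at least $\alpha_n$ iterations of sequential closure over $A=T_{[0]}$; combined with the normalization $\hgt Sq=h(q)$ for all $q\in S$ (which, via Corollary~\ref{mooff} and Lemma~\ref{tsr1}, pins any point of $T\setminus X$ reachable from $X$ down to height $1$), this shows $[A]_\gamma\setminus X$ is finite for every $\gamma<\so(G)$, hence $g\notin[A]_\gamma$, contradicting $\so(G)<\omega_1$ as a uniform bound on the number of closure iterations. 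Your proposal never uses $\so(G)$ in this role. A smaller defect: translating by $g^{-1}$ destroys the form $X=U\cap Y$ with $Y$ a subgroup, since $g\in\overline X$ need not belong to $Y$.
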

\begin{proof}
Let $g\in G$ be arbitrary. Since $\so(G) < \omega_1$ and $g\in\overline{X}$,
there exists a scaffold $(S,\St)$ such that $S\subseteq G$,
$S_{[0]}\subseteq X$, and $g = \corn S$. Define $h(g)$ to be the smallest
$\hght S$ among all such scaffolds. Let $(S,\St)$ be a scaffold
witnessing $\hght S = h(g)$. Using induction, replacing parts of $S$,
and going to proper subscaffolds, if necessary, we can assume that
$\hgt Sq = h(q)$ for every $q\in S$. Suppose $h(g) = \hght S > 1$.

Consider a limit $\alpha = \so(G)$ (the nonlimit case is
similar) and choose a countable
family $\set{S_n|n\in\omega}$ of semiloose $\alpha_n$-scaffolds such
that $S_n\subseteq Y$, $\corn {S_n} = \uG$ and $\alpha_n = \hght {S_n}$ is increasing so
that $\lim\alpha_n = \alpha$. Pick a closed proper subscaffold
$T\subseteq S\otimes\set{S_n|n\in\omega}$ and let $A = T_{[0]}$. Since
$U$ is open in $G$ and $S_{[0]}\subseteq U$, we may assume $A\subseteq
U$. Since $Y$ is a group, $A\subseteq Y$ so $A\subseteq X$. In
addition, $g\in\overline{A}$ and 
$T\setminus X = S\setminus X = \set{s\in S|\hgt Ss\geq1}$.

Let $\beta$ be the smallest ordinal such that $t\in[A]_\beta$ for some
$t\in T\setminus X$. Since there is a sequence of points of $X$
converging to $t$, $\hgt St = 1$ by the choice of $S$. Let $C = \seq
ci$ be such a sequence. If $c_i\in C_{n_i}\otimes S_{n_i}'$ for an
increasing $\seq ni$ (here we borrow the terminology of
Definition~\ref{tsr}) then by Corollary~\ref{mooff} $\hgt St > 1$
contrary to the choice of $S$. Thus we may assume that $C\subseteq
C_n\otimes S_n'$ for some $n\in\omega$.

A similar argument and induction on $\hgt {C_n\otimes S_n'}c$ shows that
$t\in[A\cap C_n\otimes S_n']_\beta$. Now, the condition that each
$S_n'$ (therefore, each $C_n\otimes S_n'$ by Lemma~\ref{tsr1}) is
semiloose implies that $\beta \geq\alpha_n$.

Let $\gamma <\alpha$. The argument above shows that (due to
$\seq\alpha n$ increasing) $[A]_\gamma\setminus X$ is finite. Thus
$g\not\in[A]_\gamma$ (otherwise $\hgt Sg = 1$) contradicting $\so(G) =
\alpha$.
\end{proof}

The aim of the next lemma is to establish Property~(C) for some sequential groups.

\begin{lemma}\label{cseq}Let $G$ be a sequential topological group, $\so(G) <
  \omega_1$, and every scaffold in $G$ have a closed proper
  subscaffold. Let $X\subseteq G$ be a dense subgroup. Suppose for
  every semiloose $\beta$-scaffold $S\subseteq G$, where
  $\beta\leq\so(G)$ there exists a semiloose $\beta$-scaffold $S'$ in
  $G$ such that $S'\subseteq X$. Let
  $\set{N_i|i\in\omega}\subseteq2^G$ be a family of nowhere dense
  subsets of $G$. Then there exists a nontrivial $C\subseteq X$ such that
  $C\to\uG$, and $C\cap N_i$ is finite for every $i\in\omega$.
\end{lemma}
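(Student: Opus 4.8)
The plan is to build the required sequence $C$ by a simultaneous recursion over the countably many nowhere dense sets $N_i$, using scaffolds to keep control of the sequential order at each stage. First I would fix, for each $\beta\le\so(G)$, a semiloose $\beta$-scaffold $S_\beta\subseteq X$ (these exist by hypothesis), and observe that since $\so(G)<\omega_1$ there is a semiloose scaffold $S$ with $\corn S=\uG$ and $\hght S$ large enough that $\uG\in[S_{[0]}]_{\hght S}$ but $\uG\notin[S_{[0]}]_\gamma$ for $\gamma<\hght S$; by the hypothesis on closed proper subscaffolds and the second half of the semiloose-scaffold lemma, we may take $S$ closed, so that $\uG\in[S_{[0]}]_\alpha$ genuinely, with $S_{[0]}\subseteq X$. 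The point is to thin $S$ repeatedly so that each $N_i$, translated along the base of the resulting scaffold, becomes nowhere dense, and then extract a convergent sequence from the base that misses every $N_i$.

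The core of the argument is the thinning step against a single $N$. Here I would use the group structure exactly as in the (currently commented-out) lemma in the excerpt: since $N$ is nowhere dense, for any open $V$ and any convergent sequence with limit $c$ there is an open $U\ni\uG$ with $N\cdot c\cdot U\cap v\cdot U=\emptyset$ for some $v\in V$, and since a convergent sequence meets $c\cdot U$ cofinitely, $N\cdot C$ is nowhere dense for $C$ a convergent sequence; combining this with a nested family $\set{U_j|j\in\omega}$ and a countable dense $\set{x_j|j\in\omega}\subseteq X$ (available since $X$ is a dense subgroup of the separable — indeed countable-$\pi$-weight is not needed, just separability from density of a countable-looking set; more carefully, $X$ dense in $G$ forces $G$ separable only if $X$ countable, so instead I would run the density argument inside $X$ using that $N_i$ nowhere dense in $G$ implies $\overline{N_i}$ has empty interior, hence $X\setminus\overline{N_i}$ is dense in $X$). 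Iterating Lemma~\ref{thn} and Lemma~\ref{cvr} across the $S_n$'s appearing in an $\otimes$-decomposition, and diagonalizing over $i\in\omega$, I obtain an $\St$-proper subscaffold $S'\le_\St S$, still with $\corn S'=\uG$, $S'_{[0]}\subseteq X$, and such that $N_i\cdot S'_{[0]}$ (equivalently $N_i\cap$ a suitable translate) is nowhere dense for every $i$; crucially, by Lemma~\ref{thn} and Lemma~\ref{tsrd} the height is not lowered by this thinning, so $S'$ remains an $\alpha$-scaffold and, being a proper subscaffold of a closed semiloose scaffold, can be taken closed and semiloose. Then $\uG=\corn S'\in[S'_{[0]}]_\alpha\setminus[S'_{[0]}]_\gamma$ for $\gamma<\alpha$.

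The final step uses Lemma~\ref{cvr} (or directly the definition of a scaffold at level $1$ inside $S'$) to pull out a nontrivial convergent sequence $C\subseteq S'_{[0]}\subseteq X$ with $C\to\uG$: because $\uG\notin[S'_{[0]}]_\gamma$ for $\gamma<\alpha$, the witnessing $1$-subscaffold at the top of $S'$ has its base points converging to $\uG$, and since each $N_i$ is nowhere dense in $G$ (and $S'_{[0]}$ has been arranged so that each $N_i$ meets it in a set whose closure misses $\uG$), one applies Lemma~\ref{cvr} with $\cal A=\set{N_i|i\in\omega}\cup\{S'_{[0]}\setminus\bigcup_i N_i\}$: the first alternative would give some $N_i$ with $\uG\in\overline{N_i\cap S'_{[0]}}\subseteq\overline{N_i}$, contradicting nowhere-density; the second gives a convergent-modulo-finite tail inside $S'_{[0]}\setminus\bigcup_i N_i$, i.e. the desired $C$, after passing to a subsequence so that $C\cap N_i$ is finite for each $i$ (diagonalizing, since each $N_i\cap C$ is already finite by construction at stage $i$).

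\textbf{Main obstacle.} I expect the delicate point to be keeping the scaffold closed, semiloose, and of the correct height \emph{simultaneously} through the $\omega$-many thinnings against the $N_i$: each individual thinning is routine (Lemmas~\ref{thn},~\ref{tsrd},~\ref{cscaffs}), but the diagonalization must be organized — e.g. via a single neighborhood assignment $r$ of $\uG$ and $\otimes$-factors $S_n\subseteq X$ of increasing height — so that the limit scaffold still has $\corn=\uG$ at height $\alpha=\so(G)$ rather than collapsing to a lower height, and so that Corollary~\ref{mooff}-type obstructions (a convergent sequence in the base jumping between infinitely many $\otimes$-blocks) are excluded, exactly as in the proof of Lemma~\ref{ored}. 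The group-theoretic nowhere-density computation ("$N\cdot C$ nowhere dense") is the engine, but the bookkeeping that turns it into a \emph{single} sequence good for all $N_i$ at once is where the real care is needed.
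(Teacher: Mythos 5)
Your plan has a genuine gap at the extraction step, and it is structural rather than a matter of bookkeeping. If $S'$ is a \emph{closed semiloose} scaffold of height $\alpha>1$ with $\corn S'=\uG$, then by the (unnumbered) lemma on semiloose scaffolds $\uG\not\in[S'_{[0]}]_1$; that is, \emph{no} sequence of points of $S'_{[0]}$ converges to $\uG$, so there is nothing to ``pull out of the base''. Lemma~\ref{cvr} does not help here: its conclusion is only that $\corn S'$ lies in the \emph{closure} of the selected union, not that it is the limit of a convergent sequence from it. The only convergent sequences to $\uG$ living inside $S'$ are (subsequences of) the sequence of cores of the top-level components of a stratification; these points sit at positive height, are fixed under passing to proper subscaffolds ($\corn T'=\corn T$ is built into the definition), and need not lie in $X\setminus\bigcup_iN_i$ --- they could all lie in a single closed nowhere dense set, and no amount of thinning via Lemma~\ref{thn} can move them. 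For the same reason the ``$N\cdot C$ is nowhere dense'' computation from the commented-out lemma cannot by itself produce a convergent sequence to $\uG$ avoiding the $N_i$: nowhere density of translates controls closures, not first-level sequential closures.

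The paper's proof runs in the opposite direction: it is a proof by contradiction driven by the bound $\so(G)<\omega_1$. Assuming no good sequence exists (first in all of $G$, then in $X$), it starts from an \emph{arbitrary} nontrivial $C'=\seq cn\to\uG$, uses Lemma~\ref{ored} (with $U=G\setminus\bigcup\set{N_i|i\leq n}$ and $Y=G$, respectively $Y=X$) to attach to each $c_n$ a convergent sequence $C_n\to c_n$ avoiding $\bigcup\set{N_i|i\leq n}$, forms the closed $2$-scaffold $T=\bigcup_nC_n\cup\{\uG\}$ and the product $S=T\otimes\set{S_n|n\in\omega}$ with closed semiloose $S_n$ of heights $\alpha_n\to\so(G)$, arranged so that $C_n\odot S_n\subseteq G\setminus\bigcup\set{N_i|i\leq n}$. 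The contradiction hypothesis then forces every sequence in $S$ converging to $\uG$ to contain an infinite subsequence of $C'$, i.e.\ to sit above blocks of unboundedly increasing height, whence $\uG\not\in[S_{[0]}]_\beta$ for all $\beta<\alpha_n$ and all $n$ --- contradicting $\alpha_n\to\so(G)$. This sequential-order-explosion argument is the missing idea in your proposal; the thinning machinery you invoke (Lemmas~\ref{thn}, \ref{cvr}, \ref{tsrd}) plays only a supporting role, and the group-theoretic nowhere-density computation you identify as the engine does not appear in the paper's proof at all.
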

\begin{proof}
We can assume that each $N_i$ is closed in $G$. Suppose there is no
nontrivial convergent sequence $C\subseteq X$, $C\to\uG$ such that each $C\cap N_i$ is
finite. 

Suppose first that there is no such $C\subseteq G$.
Pick a nontrivial $C' = \seq cn\to\uG$, put $Y = G$, $U =
G\setminus\bigcup\set{N_i|i\leq n}$. Now $c_n\in\overline{U}$, so one
can apply Lemma~\ref{ored} to find a converging sequence $C_n\to c_n$
such that $C_n\setminus\{c_n\}\subseteq
G\setminus\bigcup\set{N_i|i\leq n}$.
Thinning out the resulting set, if
necessary, we can assume that $T = \bigcup\set{C_n|n\in\omega}\cup\{\uG\}$
is a closed $2$-scaffold.

Suppose $\so(G)$ is a limit ordinal (the nonlimit case is essentially
the same). Let now $\set{S_n|n\in\omega}$ be such that each $S_n\subseteq G$ is a
closed semiloose $\alpha_n$-scaffold where $\alpha_n\to\so(G)$ is
increasing, and $\corn {S_n} = \uG$. Passing to proper subscaffolds if
necessary, assume that $S = T\otimes\set{S_n|n\in\omega}$ is defined
and is a closed scaffold, and for each $n\in\omega$, the set $C_n\odot
S_n\subseteq G\setminus\bigcup\set{N_i|i\leq n}$. The last property
and the assumption at the beginning of the previous paragraph imply that every
sequence of points in $S$ converging to $\uG$ 
contains an infinite subsequence of $C'$. This and the closedness and
semilooseness of $C_n\otimes S_n$ show that
$\uG\not\in[T_{[0]}]_\beta$ whenever $\beta < \alpha_n$ for some
$n\in\omega$ contradicting $\alpha_n\to\so(G)$.

Therefore, we can pick a nontrivial $C' = \seq cn\to\uG$ such that
$c_n\in G\setminus\bigcup\set{N_i|i\leq n}$ for every
$n\in\omega$. Putting $Y = X$, $U = G\setminus\bigcup\set{N_i|i\leq
  n}$, and applying Lemma~\ref{ored} once again we can find a
convergent sequence $C_n\to c_n$ so that $C_n\setminus\{c_n\}\subseteq
X\setminus\bigcup\set{N_i|i\leq n}$. Now the argument in the preceding
two paragraphs can be repeated to produce the desired sequence.
\end{proof}

Intuitively, the iteration argument is set up to eliminate the
unwanted groups in the extension by destroying the appropriate
witnesses in the intermediate stages. It is thus important to keep the
size of the witness small (countable). In the case of Fr\'echet groups
dealt with in \cite{hrusak}, the groups are countable already. In the
case of general sequential groups, the size of the group must be
reduced first. The following definition formalizes one obstacle to
such a reduction.

\begin{defn}\label{colla}Let $H$ be a topological group. Call $H$ {\em
    $\omega_1$-collapsible\/} if for every closed normal subgroup $N$ of $H$,
  $\psi(H/N)\leq\omega_1$ implies $H/N$ is metrizable.
\end{defn}

\begin{lemma}\label{lindelo}Let $H$ be a separable $\omega_1$-collapsible
  topological group. Then every subspace of $H$ of size at most
  $\omega_1$ is Lindel\"of.
\end{lemma}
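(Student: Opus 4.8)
The plan is to argue by contradiction: suppose $Z\subseteq H$ has $|Z|\le\omega_1$ and is not Lindel\"of. Since $H$ is regular, non-Lindel\"ofness gives an open cover of $Z$ with no countable subcover, and by a standard pressing-down/transfinite bookkeeping argument one can extract a subset $Y=\{y_\xi\mid\xi<\omega_1\}\subseteq Z$ which is \emph{right-separated} in type $\omega_1$: there are open $V_\xi\ni y_\xi$ in $H$ with $y_\eta\notin V_\xi$ for $\eta>\xi$. First I would replace $H$ by the closed subgroup $K$ generated by $Y$ together with a fixed countable dense subset $D$ of $H$; since $|Y\cup D|\le\omega_1$ and $H$ is a topological group, $|K|\le\omega_1$, $K$ is still separable (it contains $D$, which is dense in $H$ hence in $K$), and $K$ is $\omega_1$-collapsible because being $\omega_1$-collapsible is inherited by closed subgroups (a closed normal subgroup of a closed subgroup extends appropriately, or more simply one uses that quotients of $K$ embed into quotients of $H$ at the relevant places — this point needs a short check). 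So WLOG $|H|\le\omega_1$.

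Next I would produce a closed normal subgroup $N$ with $\psi(H/N)\le\omega_1$ but $H/N$ non-metrizable, contradicting $\omega_1$-collapsibility. The idea: since $|H|\le\omega_1$ and $H$ is separable, fix a countable dense $D=\{d_n\mid n\in\omega\}$. For the pseudocharacter bound, I want an intersection of $\le\omega_1$ open sets collapsing to $N$; take $N=\bigcap\{W\mid W \text{ open},\ \mathbf{1}_H\in W\}$ — but that is just $\{\mathbf 1_H\}$ if $H$ is Hausdorff, which is too strong. Instead the right move is: build $N$ as a small normal subgroup such that $H/N$ has a point of uncountable character yet pseudocharacter $\le\omega_1$. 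Concretely, using the right-separated $Y$ and its witnessing neighborhoods $V_\xi$, after translating we may assume the $y_\xi$ accumulate only "from above", so no sequence from $Y$ converges; the closure $\overline Y$ in $H$ then behaves like a non-metrizable subspace. Let $N=\bigcap\{U\cdot U^{-1}\mid U \text{ open neighborhood of } \mathbf 1_H \text{ from a fixed family of size }\le\omega_1 \text{ separating } Y\}$; arranging this family to have size $\le\omega_1$ is possible since $|H|\le\omega_1$ and each $y_\xi$ can be separated from $\mathbf 1_H$ by one open set. Then $N$ is closed normal, $\psi(H/N)\le\omega_1$ by construction, while the images of $y_\xi$ in $H/N$ remain right-separated in type $\omega_1$, so $H/N$ is a separable group with an uncountable right-separated subspace, hence (being a topological group, where character equals $\pi$-character, and an uncountable right-separated sequence precludes first countability at an accumulation point, together with Birkhoff--Kakutani) $H/N$ is not metrizable. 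This contradicts $\omega_1$-collapsibility.

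The main obstacle I anticipate is the middle step: correctly constructing $N$ so that \emph{both} $\psi(H/N)\le\omega_1$ \emph{and} $H/N$ is genuinely non-metrizable. The tension is that making $N$ large enough to non-trivially quotient tends to kill the non-metrizability witness (the right-separated structure may collapse modulo $N$), while making $N$ small keeps non-metrizability but may not give the pseudocharacter bound — except that here $|H|\le\omega_1$ rescues us, since \emph{any} closed subgroup $N$ of such an $H$ automatically satisfies $\psi(H/N)\le|H/N|\le\omega_1$. So in fact the pseudocharacter bound is free, and the real content is just: \emph{a separable $\omega_1$-collapsible group of size $\le\omega_1$ is hereditarily Lindel\"of of all its $\le\omega_1$-sized subspaces} reduces to showing such a group cannot contain an uncountable right-separated subspace — equivalently, taking $N$ the intersection of a separating $\omega_1$-family of neighborhoods and noting $H/N$ would be non-metrizable with $\psi\le\omega_1$, contradicting collapsibility. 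I would double-check the inheritance of $\omega_1$-collapsibility to the closed subgroup $K$ and the claim that the right-separated structure survives the quotient; those are the two places where the argument could need adjusting (perhaps by choosing $N$ inside $K$ more carefully, using that $K$'s quotients also have size $\le\omega_1$).
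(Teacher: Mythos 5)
Your overall strategy (contradiction via an uncountable right-separated subspace that survives a quotient) could in principle be made to work, but as written there are two genuine gaps, both of which you partially flag yourself. First, the reduction to $|H|\leq\omega_1$ by passing to the closed subgroup $K$ generated by $Y$ and a countable dense set is not justified: $\omega_1$-collapsibility is defined in terms of quotients $H/N$ by closed \emph{normal} subgroups of $H$, and a closed normal subgroup $M$ of $K$ need not be normal in $H$, nor does $K/M$ embed in any quotient of $H$; so the property does not obviously pass to closed subgroups, and your parenthetical ``this point needs a short check'' is exactly where the argument breaks. This reduction is also unnecessary: the pseudocharacter bound does not require $|H|\leq\omega_1$, only that the family of neighborhoods of ${\bf 1}_H$ used to define $N$ has size at most $\omega_1$, which follows from $|Y|\leq\omega_1$ alone. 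Second, your $N$ is defined as an intersection of $\leq\omega_1$ many sets of the form $U\cdot U^{-1}$, but nothing in your construction makes $N$ a subgroup, let alone a closed \emph{normal} one, and without normality the definition of $\omega_1$-collapsible cannot be invoked. The delicate point is normality: the paper handles it by closing the neighborhood family $\mathcal V$ under two operations relative to a fixed countable dense subgroup $X$ --- for each $V\in\mathcal V$ and $x\in X$ there is $W\in\mathcal V$ with $W\cdot W^{-1}\subseteq V$ and $x\cdot W\cdot x^{-1}\subseteq V$ --- which keeps $|\mathcal V|\leq\omega_1$, makes $N=\bigcap\mathcal V$ a closed subgroup normalized by $X$, and then upgrades this to full normality using density of $X$, continuity of conjugation, and closedness of $N$.

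Beyond these gaps, note that the paper's proof runs in the opposite direction and is simpler: it does not argue by contradiction at all. Given a cover $\{U_\alpha\}$ of $Y$, it chooses $V_y\ni{\bf 1}_H$ with $y\cdot V_y\cdot V_y\subseteq U_{\alpha(y)}$, builds $N$ as above so that $\psi(H/N)\leq\omega_1$, uses $\omega_1$-collapsibility \emph{positively} to conclude that $H/N$ is separable metrizable (hence Lindel\"of), extracts a countable subcover of the pushed-forward cover $\{p(V_y\cdot N)\}$ of $p(Y)$, and pulls it back to a countable refinement of the original cover. If you want to salvage your contradiction argument, you would still need the same normality machinery, plus a verification that the right-separating neighborhoods are $N$-saturated so the right-separated structure survives in $H/N$; at that point the direct argument is strictly less work.
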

\begin{proof}
Let $Y\subseteq H$ be an arbitrary subset of $H$ of size at most $\omega_1$.
Let ${\cal U} = \set{U_\alpha|\alpha\in\omega_1}$ be a cover of $Y$ by
open subsets of $H$. For every $y\in Y$ pick an open subset ${\bf
  1}_H\in V_y$ of $H$ so that $y\cdot V_y\cdot V_y\subseteq
U_\alpha$ for some $\alpha\in\omega_1$. Select a countable dense
subgroup $X\subseteq H$. Construct a family ${\cal V} =
\set{V^\alpha|\alpha\in\omega_1}$ of open neighborhoods of ${\bf 1}_H$
so that $V_y\in{\cal V}$ for every $y\in Y$ and for each $V\in{\cal
  V}$ and $x\in X$ there is a $W\in{\cal V}$, $W\cdot W^{-1}\subseteq
V$, $x\cdot W\cdot x^{-1}\subseteq V$. Put $N = \bigcap{\cal V}$. Now
$N$ is a closed subgroup of $H$ that commutes with every $x\in
X$. This, the assumption that $X$ is dense, and the continuity of
$c_a:H\to H$, where $c_a(x) = x\cdot a\cdot x^{-1}$, in conjunction
with the closedness of $N$ imply the (algebraic) normality of $N$.

Let $p: H\to H/N$ be the corresponding quotient map. It follows from
the definition of $\omega_1$-collapsible and the construction of $N$ that $H/N$ is
separable metric. The sets $p(V_y\cdot N)$, $y\in Y$ form an open
cover of $p(Y)$ which has a countable subcover $\set{p(V_{y_i}\cdot
  N)|i\in\omega}$. It is straightforward to see that
$\set{V_{y_i}\cdot N|i\in\omega}$ is a countable open refinement of
${\cal U}$.
\end{proof}

The following lemma will be used to pick small witnesses in some
cases. It is stated for proper forcing notions for the sake of
generality. Its applications in this paper are limited to ccc notions
of forcing only.

\begin{lemma}\label{hlindelo}Let $V$ be a model of CH, ${\Bbb P}\in V$
  be a proper notion of
  forcing, and ${\bf G}$ be ${\Bbb P}$-generic over $V$. Let
  $H\in V[{\bf G}]$ be an $\omega_1$-collapsible topological
  group and $X, G\in V$ satisfy the following properties. $X\subseteq
  G$, $G$ is a topological group algebraically isomorphic to a
  subgroup of $H$ (below we treat $G$ as if it were an actual subgroup of $H$). 
  $X$ is countable subgroup, dense in both $G$ and
  $H$. Furthermore, for every subset $A\subseteq X$, $A\in V$ the
  closures $\overline{A}^G = \overline{A}^H\cap G$. Then $G$ is
  hereditarily Lindel\"of.
\end{lemma}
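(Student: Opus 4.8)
The goal is to show that $G$, a topological group living in the generic extension $V[\mathbf G]$ (as a subgroup of the $\omega_1$-collapsible group $H$), is hereditarily Lindel\"of, using that its countable dense subgroup $X$ and all its $V$-subsets, together with the right closure-agreement with $H$, come from the ground model $V\models\mathrm{CH}$. Since hereditary Lindel\"ofness of a regular space is equivalent to having no uncountable left-separated subspace, I would argue by contradiction: suppose $L=\{g_\alpha\mid\alpha\in\omega_1\}\subseteq G$ is left-separated, i.e.\ for each $\alpha$ there is an open $W_\alpha\subseteq G$ with $g_\alpha\in W_\alpha$ and $g_\beta\notin W_\alpha$ for $\beta<\alpha$ (equivalently $\overline{\{g_\beta\mid\beta<\alpha\}}\not\ni g_\alpha$). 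Shrinking using continuity of the group operations, I may take $W_\alpha=g_\alpha\cdot V_\alpha$ for an open symmetric $V_\alpha\ni\mathbf 1_H$, and then further take $W_\alpha=g_\alpha\cdot U_\alpha$ where $U_\alpha$ is an open neighbourhood of $\mathbf 1_H$ \emph{coming from $V$} and small enough that $g_\alpha\cdot U_\alpha\cdot U_\alpha^{-1}$ still misses the earlier points — this is where $\omega_1$-collapsibility of $H$ (via Lemma~\ref{lindelo}, which gives Lindel\"ofness of $\omega_1$-sized subspaces of $H$) and the countability and density of $X$ get used to arrange a basis of such neighbourhoods.

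**The core reflection step.** Each $g_\alpha\in G\subseteq H$ is in the closure of the countable set $X$, so pick a countable $X_\alpha\subseteq X$ with $g_\alpha\in\overline{X_\alpha}^H$. Since $X$ is countable and $X\in V$, and since $V\models\mathrm{CH}$, there are only $\omega_1$-many candidate pairs $(X_\alpha, U_\alpha)$ and in fact — and this is the point — I can code the entire left-separated configuration by ground-model objects. Concretely: by properness of $\Bbb P$ and $\mathrm{CH}$ in $V$, the family $\{\,A\subseteq X\mid A\in V\,\}$ already has size $\omega_1$ and is cofinal (in $\subseteq$) among countable subsets of $X$ in $V[\mathbf G]$ in the sense needed; the closures $\overline A^G=\overline A^H\cap G$ are determined by $\overline A^H$, which is ``the same'' computed in $V$ up to the topology of $H$. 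The plan is to replace each $g_\alpha$ by a point $x_\alpha\in X$ (using density of $X$ in $G$ and the fact that $g_\alpha\cdot U_\alpha$ is open) so close to $g_\alpha$ that $\{x_\alpha\mid\alpha\in\omega_1\}$ is still left-separated, witnessed by neighbourhoods of the form $x_\alpha\cdot U_\alpha$ with $U_\alpha\in V$ an element of a fixed $V$-indexed neighbourhood base of $\mathbf 1_H$. Now $\{x_\alpha\mid\alpha<\omega_1\}\subseteq X$, and both $X$ and the neighbourhood base lie in $V$; the relation ``$x_\beta\notin x_\alpha\cdot U_\alpha$'' for $\beta<\alpha$ is, for each fixed $\alpha$, a statement about the $V$-set $X$ and $V$-open sets, hence its truth value lies in $V$.

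**Deriving the contradiction in $V$.** Having pushed the configuration down, I get (in $V$, using that $V\models\mathrm{CH}$ so $2^\omega=\omega_1$ and a countable group has only $\omega_1$ subsets) an uncountable left-separated subspace of the countable space $X$ \emph{as computed with the subspace topology inherited from} $\overline X^H$ — but a countable space is hereditarily Lindel\"of, indeed hereditarily separable, so it cannot contain an uncountable left-separated subspace. This contradiction shows no such $L$ exists, hence $G$ is hereditarily Lindel\"of. I would also need the companion direction of the standard dichotomy if I instead choose to rule out uncountable \emph{right}-separated (discrete) subspaces; either works, but left-separated is cleaner here because ``$g_\alpha\notin\overline{\{g_\beta\mid\beta<\alpha\}}$'' transfers to $X$ via the closure-agreement hypothesis $\overline A^G=\overline A^H\cap G$ most directly.

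**Main obstacle.** The delicate point is the transfer of the witnessing neighbourhoods into $V$: a priori the open sets $W_\alpha\subseteq G$ witnessing left-separation live in $V[\mathbf G]$ and need not be ground-model objects, nor need they be of the special product form $x_\alpha\cdot U_\alpha$ with $U_\alpha\in V$. Converting them requires (i) reducing to translates of neighbourhoods of the identity by continuity of multiplication, (ii) using $\omega_1$-collapsibility of $H$ through Lemma~\ref{lindelo} to control the local structure of $H$ at $\mathbf 1_H$ by countably-generated, hence $V$-reflected, data, and (iii) genuinely using the hypothesis $\overline A^G=\overline A^H\cap G$ for $A\in V$ to ensure that closures of $V$-sets inside $G$ are computed the same way in $V$ and in $V[\mathbf G]$, so that the left-separation relation, once rephrased via closures of countable $V$-subsets of $X$, becomes absolute. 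Getting these three ingredients to mesh — in particular making sure the $x_\alpha$'s can be chosen inside $X$ while preserving left-separation witnessed by $V$-neighbourhoods — is the heart of the argument; everything after that is the trivial observation that a countable regular space is hereditarily Lindel\"of.
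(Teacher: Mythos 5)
There are two genuine problems here, one at the very first step and one at the heart of the argument. First, the dichotomy you invoke is the wrong one: a regular space is hereditarily Lindel\"of if and only if it has no uncountable \emph{right}-separated subspace (one in which every initial segment is open), while the absence of uncountable \emph{left}-separated subspaces characterizes hereditary \emph{separability}. These are not interchangeable, even for topological groups, so ruling out left-separated families of size $\omega_1$ would not give the stated conclusion. Second, and more seriously, the ``core reflection step'' cannot work as described: you propose to replace each $g_\alpha$ by a nearby $x_\alpha\in X$ so that $\{x_\alpha\mid\alpha<\omega_1\}$ is still a separated family, and then observe that a countable space has no uncountable separated subspace. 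But $X$ is countable, so uncountably many of the $x_\alpha$ must coincide, and no uncountable separated family of distinct points can exist in $X$ at all; the contradiction you reach is really a proof that the replacement itself is impossible, not that $L$ fails to exist. Indeed, if ``push an uncountable separated family into a countable dense set'' were a legitimate move, it would show that every separable regular space is hereditarily Lindel\"of (or hereditarily separable), which is false --- the Mr\'owka space and the Sorgenfrey plane are standard counterexamples. So the hypotheses that are supposed to carry the proof ($\omega_1$-collapsibility of $H$, the closure agreement $\overline{A}^G=\overline{A}^H\cap G$, properness, CH) never enter in a load-bearing way; your own ``Main obstacle'' paragraph correctly identifies that the meshing of these ingredients is still missing, and that missing part is the entire proof.

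For comparison, the paper's argument stays with open covers and never tries to reflect points into $X$. Given $Y\subseteq G$ and an open cover of $Y$, it uses CH, regularity, and the density of the countable $X$ to refine the cover to one of the form $\{\,G\setminus\overline{X\setminus U_\alpha}\mid\alpha<\omega_1\,\}$ with $U_\alpha\subseteq X$; there are only $\omega_1$ many such sets because $X$ is countable and CH holds. The hypothesis $\overline{A}^G=\overline{A}^H\cap G$ then says exactly that each of these sets traces an open set on $Y$ viewed as a subspace of $H$ in $V[{\bf G}]$, so Lemma~\ref{lindelo} (this is where $\omega_1$-collapsibility is actually used) produces a countable subcover in $V[{\bf G}]$, and properness pulls the countable set of indices back into $V$. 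You would need something of this shape --- a size-$\omega_1$ coding of the relevant open sets by ground-model subsets of $X$, plus an appeal to Lindel\"ofness of small subspaces of $H$ in the extension --- to make your plan go through.
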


\begin{proof}
Let $Y\subseteq G$ be any subspace and let $\cal U$ be any open (in $G$)
cover of $Y$. Using CH, the regularity of $G$, and the density of $X$
in $G$, we may assume, after refining ${\cal U}$, if necessary, that ${\cal U}
= \set{O_\alpha = G\setminus\overline{X\setminus U_\alpha}|\alpha\in\omega_1}$
for some $U_\alpha\subseteq X$ open in $X$.

The assumptions about $H$ and $G$ imply that in $V[{\bf G}]$ each
$O_\alpha\cap Y$ is open in $Y$ as a subspace of
$H$. Lemma~\ref{lindelo} implies that there exists a countable (in
$V[{\bf G}]$) subcover ${\cal U}'\subseteq{\cal U}$ of $Y$. Using the
property of proper forcings that countable sets of ordinals in the
extension are subsets of countable sets of ordinals in the ground
model (see~\cite{jech}, Lemma~8.7, for example) and the properness 
of ${\Bbb P}$ concludes the proof.
\end{proof}

\section{Main theorem}

The next lemma is a generalization of Proposition~5.3 (b) from
\cite{hrusak}. The original preservation lemma was stated for
Fr\'echet spaces and could not be reused due to the
lack of an appropriate version of Lemma~5.1(\cite{hrusak}) for the
general case needed here (see the discussion at the beginning of section~\ref{scissgm}).

\begin{lemma}\label{pohit}Let $X\subseteq G$, where $G$ is sequential and $X$ has
  no isolated points as a subspace of $G$. Then ${\Bbb L}_{\nwd^*(X)}$
  strongly preserves $\omega$-hitting.
\end{lemma}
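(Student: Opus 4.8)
The plan is to verify the Katětov criterion of Proposition~\ref{sohp}. The filter $\nwd^*(X)$ is exactly the one dual to the ideal $\nwd(X)$ (a subset of $X$ has nowhere dense complement iff it contains a dense open set), and $\nwd(X)$ is a proper ideal on $\omega$ containing the finite sets because $X$ — which is countable, since ${\Bbb L}_{\nwd^*(X)}$ presupposes this — has no isolated points. So it suffices to show: for every $Y\in\nwd(X)^+$ and every ideal ${\cal J}\leq_K\nwd(X)\rst Y$, witnessed by some $f:Y\to\omega$ with $f^{-1}(J)\in\nwd(X)$ for all $J\in{\cal J}$, the ideal ${\cal J}$ is not $\omega$-hitting. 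Replacing $X$, $Y$, $f$ by $U=\Int_X\overline Y$, $Y\cap U$, and $f\rst(Y\cap U)$ — which preserves regularity, crowdedness, the inclusion into the sequential space $G$, and the Katětov relation, since nowhere density in $U$ and in $X$ agree on subsets of $U$ — I may assume $Y$ is dense in $X$. Since ${\cal J}$ is an ideal on $\omega$ it contains every singleton, so each fibre $f^{-1}(m)$, hence $f^{-1}(F)$ for $F\subseteq\omega$ finite, is nowhere dense in $X$, and consequently $Y\setminus f^{-1}(F)$ is dense in $X$ for every finite $F$.

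It is enough to build a countable family $\{A_n:n\in\omega\}\subseteq[\omega]^\omega$ such that $f^{-1}(B)$ is dense in $X$ whenever $B\subseteq\omega$ meets every $A_n$ in an infinite set; for then $f^{-1}(B)\notin\nwd(X)$, so $B\notin{\cal J}$, i.e.\ no member of ${\cal J}$ meets all the $A_n$ infinitely. I treat each $x\in X$ separately (there are countably many); all closures below are taken in $G$. Put $F_x=\set{m\in\omega|x\in\overline{f^{-1}(m)}}$. If $F_x$ is infinite, set $A^x=F_x$: any $B$ with $B\cap A^x\neq\emptyset$ contains an $m$ with $x\in\overline{f^{-1}(m)}\subseteq\overline{f^{-1}(B)}$, so $x\in\overline{f^{-1}(B)}$. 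If $F_x$ is finite, then $Y\setminus f^{-1}(F_x)$ is dense in $X$, so $x\in\overline{Y\setminus f^{-1}(F_x)}$; as $G$ is sequential, $x$ lies in some iterated sequential closure of $Y\setminus f^{-1}(F_x)$, so Lemma~\ref{econv} yields a scaffold $(S^x,\St^x)$ in $G$ with $\corn S^x=x$ and $S^x_{[0]}\subseteq Y\setminus f^{-1}(F_x)$. I apply Lemma~\ref{cvr} to $S^x$ and the cover $\set{f^{-1}(m)|m\in f[S^x_{[0]}]}$ of $S^x_{[0]}$. Its first alternative would produce $m\in f[S^x_{[0]}]$ (so $m\notin F_x$) with $x\in\overline{f^{-1}(m)\cap S^x_{[0]}}\subseteq\overline{f^{-1}(m)}$, contradicting $m\notin F_x$; moreover $f[S^x_{[0]}]$ must be infinite, since $\corn S^x=x\in\overline{S^x_{[0]}}$ and, were $f[S^x_{[0]}]$ finite, this would put $x$ into the closure of one of finitely many fibres $f^{-1}(m)$ with $m\notin F_x$ — impossible for the same reason. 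Hence the cover is infinite and the second alternative holds: there is a countable family of infinite subfamilies of the cover, which we may write as $\set{f^{-1}(m)|m\in A^x_k}$ with $A^x_k\in[\omega]^\omega$ and $A^x_k\cap F_x=\emptyset$ ($k\in\omega$), such that $x\in\overline{f^{-1}(M)}$ for every $M\subseteq\omega\setminus F_x$ meeting every $A^x_k$ infinitely. In particular, if $B\subseteq\omega$ meets every $A^x_k$ infinitely then, taking $M=B\setminus F_x$, we get $x\in\overline{f^{-1}(B)}$.

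Now let $\{A_n:n\in\omega\}$ enumerate $\set{A^x|F_x\text{ infinite}}\cup\set{A^x_k|F_x\text{ finite},k\in\omega}$, a countable family of infinite subsets of $\omega$. If $B\subseteq\omega$ meets every $A_n$ in an infinite set, then the two cases above give $x\in\overline{f^{-1}(B)}$ for every $x\in X$, so $X\subseteq\overline{f^{-1}(B)}^{\,G}$; since the $G$-closure of $f^{-1}(B)$ contains $X$, so does its $X$-closure, i.e.\ $f^{-1}(B)$ is dense in $X$ and $B\notin{\cal J}$. Thus $\{A_n\}$ witnesses that ${\cal J}$ is not $\omega$-hitting, completing the verification of Proposition~\ref{sohp}(1).

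The combinatorial core — the split on $F_x$ — is routine once the scaffold is in hand, so the real work is the middle paragraph: producing, through Lemma~\ref{econv}, a scaffold for $x$ anchored inside $Y\setminus f^{-1}(F_x)$, and checking that this anchoring forces the first alternative of Lemma~\ref{cvr} to fail while keeping its conclusion non-vacuous. This is exactly where it matters that $X$ sits inside a sequential space rather than being Fréchet–Urysohn (as in Proposition~5.3(b) of \cite{hrusak}, which this lemma generalizes): without sequentiality of $G$ there need be no scaffold for $x$ at all. I expect the fiddliest points to be the translation between "infinite subfamilies of the cover" and "infinite subsets of $\omega$" in the conclusion of Lemma~\ref{cvr}, and keeping track of which values $f[S^x_{[0]}]$ omits, but nothing beyond bookkeeping already supported by the material of Section~3.
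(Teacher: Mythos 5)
Your proof is correct and follows essentially the same route as the paper's: both verify condition (1) of Proposition~\ref{sohp} by using the sequentiality of $G$ together with Lemma~\ref{econv} to anchor a scaffold at each point of a dense piece of $Y$, and then apply Lemma~\ref{cvr} to the cover by fibres of $f$ to extract the countable families witnessing that ${\cal J}$ is not $\omega$-hitting. The only difference is cosmetic: where you split on whether $F_x$ is infinite, the paper builds each $S^x$ so that its $n$-th branch avoids the first $n$ fibres, which defeats the first alternative of Lemma~\ref{cvr} uniformly, and then phrases the conclusion as a contradiction with $\omega$-hittingness rather than exhibiting the witnessing family directly.
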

\begin{proof}
We reuse some of the original notation of Proposition~5.3 (b) in \cite{hrusak}.
Let $Y\in\nwd(X)^+$ and suppose there is an $\omega$-hitting ideal
${\cal J}\leq_K\nwd(X)\rst Y$ witnessed by $f:Y\to\omega$. Put $U =
\Int(\overline{Y})\neq\emptyset$ and $Z = U\cap Y$.

Put $N_i = f^{-1}(i)\in\nwd(X)$, ${\cal A} =
\set{N_i|i\in\omega}$. For each $x\in Z$ pick a scaffold 
$S^x\subseteq G$ such that $\corn S^x = x$, $\St_x^-=\set{S_n^x|n\in\omega}$ for some
stratification $\St_x$ of $S^x$ and $(S_n^x)_{[0]}\subseteq
Z\setminus\bigcup\set{N_i|i\leq n}$. Note that ${\cal A}$ is a cover
of $S^x_{[0]}$ so by the choice of $S^x_n$, Lemma~\ref{cvr} implies
the existence of a countable ${\cal A}_x\subseteq[{\cal A}]^\omega$
such that whenever ${\cal A}'\subseteq{\cal A}$ satisfies $|{\cal
  A}'\cap {\cal A}''|=\omega$ for every ${\cal A}''\in{\cal A}_x$, the
point $x = \corn S_x\in\overline{\bigcup\set{S^x_{[0]}\cap B|B\in{\cal
A}'}}$.

Since ${\cal J}$ is $\omega$-hitting there is a $J\in{\cal J}$ such
that $J\cap f(\bigcup{\cal A}'')$ is infinite for every $x\in Z$, ${\cal
  A}''\in{\cal A}_x$. This implies $f^{-1}(J)$ is dense in $Z$ which
is dense in $U$, a contradiction.
\end{proof}

The next definition provides a description of a potential witness of a
sequential group with an intermediate sequential order in the final extension.

\begin{defn}\label{conseqp}Let $X$ be a non-metrizable topological group defined on $\omega$,
  $\overline{\cal S}\subseteq[[X]^\omega]^\omega$. Call $(X,\overline{\cal S})$ {\em
    a consequential pair} if $X$ can be embedded as a subgroup in a
  sequential group $G$ such that  $\so(G) < \omega_1$, for every semiloose
  $\beta$-scaffold in $G$ there exists a semiloose $\beta$-scaffold in
  $X$, and $\overline{\cal S}$ lists every ${\cal T}$ that can be represented as ${\cal
    T}=\set{S\cap X|S\in\St}$ for some scaffold $(S, \St)$ in $G$. If,
  in addition to the properties above, every scaffold in $G$ has a proper
  closed subscaffold, we will call $(X,\overline{\cal S})$ a {\em
    strong} consequential pair.
\end{defn}

Note that such a $G$ is unique up to an isomorphism, since the second
element of the pair uniquely determines both the algebraic structure,
as well as the topology of $G$. We will say that $X$ {\em extends to
  $G$}. 

\begin{theorem}It is consistent with ZFC that every sequential group
  of sequential order $< \omega_1$ is Fr\'echet and that every
  countable Fr\'echet group is metrizable.
\end{theorem}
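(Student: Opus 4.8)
The plan is to build the model by a finite support iteration of ccc forcings of length $\omega_2$ over a model of CH, where the iterands are Laver--Mathias--Prikry forcings $\Bbb L_{\nwd^*(X)}$ applied to suitably chosen witnesses. First I would set up the bookkeeping: enumerate, with cofinal repetitions, all names for pairs $(X,\overline{\cal S})$ that could code a \emph{strong} consequential pair in the final model; by Lemma~\ref{hlindelo} together with $\omega_1$-collapsibility and the hereditary Lindel\"of reduction, any potential witnessing group of intermediate sequential order in $V[{\bf G}_{\omega_2}]$ can be assumed to live (as a countable dense subgroup $X$ with its scaffold data $\overline{\cal S}$) in some intermediate model $V[{\bf G}_\alpha]$, so it suffices to kill each such countable candidate at some later stage. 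At a stage where the bookkeeping hands us such an $(X,\overline{\cal S})$ extending to $G$, I would force with $\Bbb L_{\nwd^*(X)}$; the generic real $\dot A_{gen}$ is an $X$-large-ish diagonalizing set, and the point is that after this forcing $X$ (hence $G$) can no longer be sequential of intermediate order.

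The second main component is the preservation argument. Throughout the iteration I must preserve enough $\omega$-hitting (and $\omega$-hitting w.r.t.\ $X$) to guarantee that the generic reals added at each stage actually do the intended damage in the \emph{final} model, not just in the next step. Here I would invoke Proposition~\ref{pihit}, Lemma~\ref{closowh}, and the new Lemma~\ref{pohit} to see that $\Bbb L_{\nwd^*(X)}$ produces a generic witnessing $\omega$-hitting, and then Lemmas~\ref{fsohp}, \ref{sigpr}, and~\ref{fipr} (finite support iterations of ccc forcings strongly preserving these properties continue to preserve them, and $\sigma$-centered forcings strongly preserve $\omega$-hitting w.r.t.\ $X$, noting $\Bbb L_{\cal F}$ is $\sigma$-centered) to push the preservation through the whole iteration. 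The verification that $\Bbb L_{\nwd^*(X)}$ preserves $\omega$-hitting in the first place goes through Proposition~\ref{sohp}: one checks that for $X$ a dense-in-itself subspace of a sequential group, no restriction-then-Kat\v etov-image of $\nwd(X)$ is $\omega$-hitting, which is essentially the content of Lemma~\ref{pohit}.

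Next I would assemble the contradiction. Suppose, in $V[{\bf G}_{\omega_2}]$, that $G$ is a sequential group with $1 < \so(G) < \omega_1$. Using Corollary~\ref{cscaffs} (together with Theorem~\ref{hnpsi} to get $\psi = \omega$ on the relevant $T_5$/pseudocharacter side, or the $\omega_1$-collapsibility analysis to control $\psi$ and hence invoke $\psi < \frak b$), every scaffold in $G$ has a closed proper subscaffold; by Corollary~\ref{loord} I may replace $G$ by a quotient and by Lemma~\ref{hlindelo} reduce to a hereditarily Lindel\"of, hence (being separable metric-ish in the relevant quotients) essentially countable, dense subgroup $X$ with its scaffold coding $\overline{\cal S}$, giving a strong consequential pair that by reflection already appeared at some stage $\alpha < \omega_2$. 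The generic real added at the next appropriate stage, combined with the preserved $\omega$-hitting w.r.t.\ $X$ from Lemma~\ref{closowh} and the property (C) established for such $G$ via Lemma~\ref{cseq}, contradicts the continued existence of the semiloose $\so(G)$-scaffold in $X$ required by Definition~\ref{conseqp}. Finally, the statement that every countable Fr\'echet group is metrizable in this model is exactly the conclusion of the Hru\v s\'ak--Ramos-Garc\'ia argument, which our iteration contains as the special case where the candidates $X$ are already countable Fr\'echet groups (so property (C) is automatic and no scaffold machinery is needed).

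The hard part will be the reduction step: showing that an arbitrary sequential group of intermediate order in the final model reflects down to a \emph{countable} witness that was handled by the bookkeeping. This requires simultaneously (i) getting a closed proper subscaffold (needs $\psi(G) < \frak b$ or the $T_5$ hypothesis, via Corollary~\ref{cscaffs} and Theorem~\ref{hnpsi}); (ii) quotienting by a normal subgroup to force hereditary Lindel\"ofness without dropping the sequential order, via $\omega_1$-collapsibility, Lemma~\ref{lindelo}, Corollary~\ref{loord}, and Lemma~\ref{hlindelo}; and (iii) checking that the semiloose-scaffold transfer condition in Definition~\ref{conseqp} genuinely holds for the reduced group, so that the generic real kills it. Coordinating these with the properness/CH constraints of Lemma~\ref{hlindelo} — in particular ensuring the reduced witness lives in an \emph{intermediate} model — is where the real work lies; the preservation theorems, by contrast, are essentially black-boxed from \cite{brendle} and \cite{hrusak}.
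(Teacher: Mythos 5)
Your overall architecture matches the paper's: a length-$\omega_2$ finite support iteration of ${\Bbb L}_{\nwd^*(X)}$ over a model of CH, preservation of $\omega$-hitting (and $\omega$-hitting w.r.t.\ $X$) via Lemmas~\ref{fsohp}, \ref{sigpr}, \ref{fipr} and the new Lemma~\ref{pohit}, and a reduction of an arbitrary intermediate-order witness to a countable dense subgroup via the dichotomy around $\omega_1$-collapsibility. You also correctly identify the reduction step as where the real work lies. However, there is a genuine gap in your guessing device. You propose to ``enumerate, with cofinal repetitions, all names for pairs $(X,\overline{\cal S})$''; but $\overline{\cal S}$ is not a countable object --- it lists \emph{all} scaffold traces on $X$ and in the final model has size continuum $=\omega_2$, so the full pair only exists in $V[{\bf G}_{\omega_2}]$ and cannot be handed to any intermediate stage by a straightforward enumeration of names. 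The paper instead assumes $\diamondsuit(S_1^2)$ in the ground model, proves that for club-many $\alpha\in S_1^2$ the \emph{trace} $\overline{\cal S}_\alpha=\overline{\cal S}\cap V[{\bf G}_\alpha]$ is already a strong consequential pair in $V[{\bf G}_\alpha]$ with $\tau_\alpha=\tau\cap V[{\bf G}_\alpha]$ (and, in the $\omega_1$-collapsible case, that the closure data $\overline{\cal C}_\alpha$ also reflects, so that Lemma~\ref{hlindelo} applies to make the stage-$\alpha$ extension $T_5$ and hence, by Theorem~\ref{hnpsi} and Corollary~\ref{cscaffs}, strong), and then lets the diamond catch one such $\alpha$. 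Without this reflection-plus-guessing step you have no guarantee that the forcing at stage $\alpha$ is applied to an object whose nowhere dense ideal and convergence structure agree with those of the final group restricted to $X$, which is exactly what Proposition~\ref{pihit} and Lemma~\ref{closowh} need.

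A second, smaller gap: you never assemble the terminal contradiction. The paper's endgame has two distinct halves. First, assuming some neighborhood $U$ of $\uG$ satisfies $U\cdot U\cap A_{gen}=\emptyset$, the set $A=X\setminus U$ is $A_{gen}$-large, and property (C) (from Lemma~\ref{cseq}, which itself requires the semiloose-scaffold transfer and Lemma~\ref{ored}) feeds Lemma~\ref{closowh}; preservation then yields a sequence in $A$ converging to $\uG$, which is absurd since $A$ misses $U$. Second, with $A_{gen}$ thus accumulating at $\uG$, sequentiality of $H$ gives a scaffold $(S,\St)$ with $\corn S=\uG$ and $S_{[0]}\subseteq A_{gen}$, and the preserved $\omega$-hitting of ${\cal I}_{\uG}(\tau_\alpha)\rst A_{gen}$ (Lemma~\ref{pohit} plus Lemma~\ref{fsohp}, which uses Lemma~\ref{cvr} inside the proof of Lemma~\ref{pohit}) produces a single small set meeting every level-one sequence of $\St$ infinitely, contradicting $\uG\in\overline{S_{[0]}}$. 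Your phrase ``contradicts the continued existence of the semiloose $\so(G)$-scaffold'' gestures at the second half but omits the first, and it is the first half that consumes most of the scaffold machinery of Section~\ref{scissgm}.
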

\begin{proof}
Let the ground model $V\vdash\text{CH}$, and suppose
$\set{A_\alpha|\alpha\in S_1^2}$ witnesses $\diamondsuit(S_1^2)$ in
$V$. Construct a finite support iteration ${\Bbb P}_{\omega_2} =
\langle\,{\Bbb P}_\alpha,\dot{\Bbb Q}_\alpha:\alpha < \omega_2\,\rangle$
so that whenever $\alpha\in S_1^2$ and $A_\alpha$ codes a ${\Bbb
  P}_\alpha$-name for a strong consequential pair,
$\dot{\Bbb Q}_\alpha$ is a ${\Bbb P}_\alpha$-name for ${\Bbb
  L}_{\nwd^*(\tau)}$. Otherwise, let $\dot{\Bbb Q}_\alpha$ be a ${\Bbb
  P}_\alpha$-name for ${\Bbb L}_{\nwd^*({\Bbb Q})}$. Let ${\bf
  G}_{\omega_2}$ be ${\Bbb P}_{\omega_2}$-generic over $V$.

Assume that in $V[{\bf G}_{\omega_2}]$ there is a sequential group $H$
such that $1 < \so(H) < \omega_1$ or $H$ is countable non-metrizable
and Fr\'echet. The Fr\'echet case is handled almost identically to
\cite{hrusak} so here we only consider the sequential groups of
intermediate sequential orders. In this case there exists a separable
$H$ as above.

Suppose first that in $V[{\bf G}_{\omega_2}]$ there exists a closed
normal subgroup $N\subseteq H$ such that $\psi(H/N)\leq\omega_1$ and
$H/N$ is not metrizable. By Corollary~\ref{loord}, $\so(H/N) <
\omega_1$. If $\so(H/N) = 1$, since $H$ is separable, there is a
non-metrizable countable Fr\'echet group in $V[{\bf
    G}_{\omega_2}]$. Thus we only consider the case of $1 < \so(H/N) <
\omega_1$. As pointed out in \cite{hrusak}, $\frak b = \omega_2$ in
$V[{\bf G}_{\omega_2}]$ so $\psi(H/N)\leq\omega_1 < {\frak b}$.
Corollary~\ref{cscaffs} implies that every scaffold in $H/N$ has a
proper closed subscaffold. Pick a countable dense subgroup $X$ of $H/N$
such that $X$ contains a semiloose $\beta$-scaffold for every
semiloose $\beta$-scaffold in $H/N$ (this only requires adding
countably many countable witnesses to $X$). Assume $X = \omega$ and
put 
$$
\overline{\cal S} = \set{\St\rst X|(S,\St)\text{ is a scaffold in
  }H/N\text{ for some }S\subseteq H/N}.
$$

Now a standard argument implies the existence of a club $C\subset
S_1^2$ relative to $S_1^2$ such that for all $\alpha\in C$, $V[{\bf
    G}_{\alpha}]\models$ $(X,\overline{\cal S}_\alpha)$ is a strong
consequential pair, where $\overline{\cal S}_\alpha = \overline{\cal
  S}\cap V[{\bf G}_\alpha]$ and $X$ extends to a group of intermediate
sequential order.

Alternatively, suppose for every closed normal $N\subseteq H$ such
that $\psi(H/N)\leq\omega_1$ the group $H/N$ is metrizable, i.e.~$H$
is $\omega_1$-collapsible. Just as above, pick a countable dense
subgroup $X$ of $H$ such that $X$ contains a semiloose
$\beta$-scaffold for every semiloose $\beta$-scaffold in $H$ and
assume $X = \omega$. Put 
$$
\overline{\cal S} = 
\set{\St\rst X|(S,\St)\text{ is a scaffold in }H\text{ for some
  }S\subseteq H}
$$ 
and 
$$
\overline{\cal C} = \set{(A,\St)|A\subseteq X,
  \text{$(S,\St)$ is a scaffold such that $\St\rst X\in\overline{\cal
    S}$, $\overline{A}^H\ni\corn S$}}
$$ 
(this $\overline{\cal C}$ codes the closures of subsets of $X$). As
before, conclude that there exists a club $C\subset
S_1^2$ relative to $S_1^2$ such that for all $\alpha\in C$, $V[{\bf
    G}_{\alpha}]\models$ $X\subseteq G$, $(X,\overline{\cal S}_\alpha)$ is a
consequential pair and 
$$
\overline{\cal C}_\alpha = \set{(A,\St)|A\subseteq X,
  \text{$(S,\St)$ is a scaffold such that $\St\rst X\in\overline{\cal
    S}$, $\overline{A}^G\ni\corn S$}}
$$ 
where $\overline{\cal S}_\alpha = \overline{\cal
  S}\cap V[{\bf G}_\alpha]$ and $\overline{\cal C}_\alpha = \overline{\cal
  C}\cap V[{\bf G}_\alpha]$. Embed $X$ in $G$ as in
Definition~\ref{conseqp}. Since the group operation and the
closures of subsets of $X$ in $G$ are `coded' by $\overline{\cal
  S}_\alpha$ and $\overline{\cal C}_\alpha$, the properties above
together with those of $H$, $V[{\bf G}_\alpha]$, and ${\Bbb
  P}_{\omega_2}$ imply that the conditions of
Lemma~\ref{hlindelo} are satisfied and $G$ is hereditarily
Lindel\"of and, therefore, $T_5$. Now Theorem~\ref{hnpsi}
implies that $G$ has a countable pseudocharachter. Applying
Lemma~\ref{cscaffs} shows that $V[{\bf
    G}_{\alpha}]\models$ $(X,\overline{\cal S}_\alpha)$ is a strong
consequential pair.

The choice of $C$ implies that if $\tau$ is the topology of $X$
inherited from $H$ in $V[{\bf G}_{\omega_2}]$ then
for any $\alpha\in C$, $\tau_\alpha = \tau\cap V[{\bf G}_\alpha]$
where $\tau_\alpha$ is the topology on $X$ `induced' by $\overline{\cal S}_\alpha$.

According to Proposition~\ref{pihit}, at some stage
$\alpha\in C$ a set $A_{gen}$ would have been added such that $V[{\bf
    G}_{\alpha+1}]\models$ $A_{gen}\in{\cal I}_{\uG}^+(\tau_\alpha)$ and
the ideal ${\cal I}_{\uG}(\tau_\alpha)\rst A_{gen}$ is
$\omega$-hitting.
Suppose there exists an open neighborhood $U$ of $\uG$ in $X$ such
that $U\cdot U\cap A_{gen} = \emptyset$. Then, just as in
\cite{hrusak}, the set $A = X\setminus U$ is
$A_{gen}$-large. Lemma~\ref{cseq} supplies the conditions necessary
for the conclusion of Lemma~\ref{closowh} to hold. Thus in $V[{\bf
    G}_{\alpha+1}]$ the ideal ${\cal I}_{\uG}^\perp(\tau_\alpha)$ is
$\omega$-hitting w.r.t.~$A_{gen}$ so it follows from Lemmas~\ref{sigpr} and~\ref{fipr}
that ${\cal I}_{\uG}^\perp(\tau_\alpha)$ is $\omega$-hitting
w.r.t.~$A_{gen}$ in $V[{\bf G}_{\omega_2}]$. Hence there is a
$C_1\in[A]^\omega$ such that $C_1$ converges to $\uG$ in
$\tau_\alpha$. Now $C_1\in{\cal S}_\alpha\subseteq\overline{\cal S}_\alpha$
so $C_1$ is a subsequence of $A$ that converges to $\uG$ in $\tau$
contradicting $A\cap U = \emptyset$.

Since $H$ is sequential in $V[{\bf G}_{\omega_2}]$ there exists a
scaffold $(S,\St)$ in $H$ such that $S_{[0]}\subseteq A$ and $\corn S
= \uG$. By Lemma~\ref{pohit} and Lemma~\ref{fsohp} the ideal ${\cal
  I}_{\uG}(\tau_\alpha)\rst A_{gen}$ is $\omega$-hitting in $V[{\bf
    G}_{\omega_2}]$ so there exists an $I\in{\cal
  I}_{\uG}(\tau_\alpha)$ such that $I\cap C_1$ is infinite for every
$C_1\in\St_{[1]}$ contradicting $\corn S = \uG\in\overline{S_{[0]}}$.
\end{proof}

\section{Remarks and open questions}
The results about topological groups with various convergence
properties obtained so far indicate some important implications set
theoretic combinatorics has concerning the existence of such
groups. Much less is known about more subtle interactions between
convergence and group-theoretic properties of the space. It seems
worthwhile to repeat a question asked in \cite{hrusak} here:

\begin{question}\label{hrgqu}Is it consistent with ZFC that some countable topologizable
  group admits a non-metrizable Fr\'echet group topology while another
  does not?
\end{question}

The next question is a recast of Question~\ref{hrgqu} to sequential
groups although it is open for uncountable groups as well. Using the
techniques of \cite{sh} it is possible to construct sequential group
topologies with intermediate sequential orders on any countable
topologizable group using CH.

\begin{question}\label{agiso}Is it consistent with ZFC that some (countable)
  topologizable group admits a sequential topology with intermediate
  sequential order while another does not?
\end{question}

The following two questions do not have any counterparts for Fr\'echet
groups.

\begin{question}Is it consistent with ZFC that groups of intermediate
  sequential order $\alpha$ exist for some $\alpha\in\omega_1$ but not
  for all of them? Only finite $\alpha$? Only infinite ones?
\end{question}

A more specific version of Question~\ref{agiso} asks about the influence
the {\em size} of a group has on its convergence properties.

\begin{question}Is it consistent with ZFC that there is an uncountable
  group of intermediate sequential order but there is no countable
  such?
\end{question}

In~\cite{sha} D.~Shakhmatov repeats his question from 1990
(Question~7.5):

\begin{question}\label{ccsnf}Is a countably compact sequential group
  Fr\'echet-Urysohn?
\end{question}

Here is a short argument showing that the model constructed in this
paper (or, indeed, Hru\v s\'ak-Ramon-Garc\'ia's original model) provides a
consistent positive answer to Question~\ref{ccsnf}.

\begin{lemma}Let $H$ be a countably compact sequential non Fr\'echet
  group in $V[{\bf G}_{\omega_2}]$. Then $H$ is
  $\omega_1$-collapsible.
\end{lemma}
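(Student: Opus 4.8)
The plan is to show that for any closed normal subgroup $N\subseteq H$ with $\psi(H/N)\leq\omega_1$, the quotient $H/N$ is metrizable. First I would observe that $H/N$ is again countably compact (continuous image of a countably compact space) and sequential (by Corollary~\ref{loord}, $\so(H/N)\leq\so(H)<\omega_1$, and a sequential image of a sequential space under a quotient map is sequential). Since $H$ is not Fr\'echet, there is at least a nontrivial convergent sequence in $H$, but for the quotient I need one in $H/N$ itself; if $H/N$ has no nontrivial convergent sequence then, being sequential, it is discrete, hence (being countably compact) finite and certainly metrizable. So assume $H/N$ contains a nontrivial convergent sequence.

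The key leverage is Theorem~\ref{hnpsi}: if $H/N$ is $T_5$ and contains a nontrivial convergent sequence, then $\psi(H/N)=\omega$. So the heart of the argument is to upgrade $\psi(H/N)\leq\omega_1$ to $\psi(H/N)=\omega$, after which a countably compact group of countable pseudocharacter is metrizable (a countably compact space of countable pseudocharacter is first countable — indeed in a countably compact regular space countable pseudocharacter implies a countable base at each point — and then Birkhoff--Kakutani finishes it). To invoke Theorem~\ref{hnpsi} I must verify $H/N$ is $T_5$. Here I would use that in $V[{\bf G}_{\omega_2}]$ we have $\frak b=\omega_2$, so $\psi(H/N)\leq\omega_1<\frak b$; Corollary~\ref{cscaffs} then gives that every scaffold in $H/N$ has a closed proper subscaffold, and more importantly the same kind of scaffold-thinning should let one extract countable convergent structure controlling closures. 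Actually the cleaner route: the countable compactness plus $\psi\leq\omega_1<\frak b$ forces $H/N$ to be first countable at a dense set of points, or one argues directly that a countably compact group with $\psi\leq\omega_1$ and no uncountable free sequences is hereditarily normal. I expect the intended argument is shorter: countably compact + sequential is $\omega$-bounded-like enough that $\psi\leq\omega_1$ already implies the group has no uncountable convergent-sequence-free obstruction, making it $T_5$.

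So the steps, in order, are: (1) reduce to a closed normal $N$ with $\psi(H/N)\leq\omega_1$ and show $H/N$ is countably compact and sequential; (2) dispose of the case where $H/N$ has no nontrivial convergent sequence (then it is finite); (3) using $\psi(H/N)\leq\omega_1<\frak b=\omega_2$ together with Corollary~\ref{cscaffs} (and, if needed, Lemma~\ref{cscaff}), show $H/N$ is hereditarily normal; (4) apply Theorem~\ref{hnpsi} to conclude $\psi(H/N)=\omega$; (5) conclude $H/N$ is first countable from countable compactness and countable pseudocharacter, then metrizable by Birkhoff--Kakutani.

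The main obstacle is step (3): proving $H/N$ is $T_5$. Hereditary normality is not automatic for countably compact groups, and the only tool on hand, Theorem~\ref{hnpsi}, runs the other direction (it \emph{assumes} $T_5$). I would expect to need a separate argument that a countably compact sequential group with $\psi\leq\omega_1$ below $\frak b$ cannot contain the kind of subspace (e.g.\ a copy of an uncountable $\Sigma$-product obstruction, or two disjoint closed sets that cannot be separated) that violates hereditary normality — plausibly by showing every such subspace is in fact metrizable via the scaffold machinery of Section~4, since $\psi<\frak b$ bounds the complexity of convergence. If that fails, the fallback is that countable compactness already forces $\psi(H/N)=\omega$ directly (an $\omega$-bounded group of pseudocharacter $\leq\omega_1$ with $\omega_1<\frak b$ has countable pseudocharacter), bypassing $T_5$ entirely and making step (4) and Theorem~\ref{hnpsi} unnecessary.
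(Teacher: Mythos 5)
There is a genuine gap, and you have already located it yourself: step (3), establishing that $H/N$ is $T_5$, is not something you can prove here, and without it the whole chain (Theorem~\ref{hnpsi} $\Rightarrow$ $\psi=\omega$ $\Rightarrow$ first countable $\Rightarrow$ metrizable) collapses. Hereditary normality of $H/N$ is exactly the sort of property that, in this paper, is only obtained through the heavy forcing machinery of Lemma~\ref{hlindelo} (hereditarily Lindel\"of via properness and $\omega_1$-collapsibility) --- and that machinery presupposes $\omega_1$-collapsibility, which is what you are trying to prove. Your proposed fallbacks (``countable compactness forces $\psi=\omega$ directly'', ``an $\omega$-bounded group of pseudocharacter $\leq\omega_1$ with $\omega_1<\frak b$ has countable pseudocharacter'') are unsubstantiated; nothing in the paper supports them. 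So the route through $T_5$ and Theorem~\ref{hnpsi} is the wrong decomposition for this lemma.

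The paper's proof is by contradiction and is much shorter. Suppose $H$ (reduced to a separable subgroup containing a witness) is not $\omega_1$-collapsible, so some quotient $H/N$ has $\psi(H/N)\leq\omega_1<\frak b$ and is separable, sequential, and not metrizable. The key input you are missing is a consequence of the main theorem in the same model: $V[{\bf G}_{\omega_2}]$ contains no countable non-metrizable Fr\'echet group, so $H/N$ cannot be Fr\'echet (otherwise a countable dense subgroup would be such a group). Hence $\so(H/N)\geq 2$ and $H/N$ contains a semiloose $2$-scaffold; since $\psi(H/N)<\frak b$, Corollary~\ref{cscaffs} yields a \emph{closed} semiloose $2$-scaffold, and any such scaffold contains a closed infinite discrete subspace --- contradicting countable compactness of $H/N$. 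Note that countable compactness is used only at this final step to rule out the closed discrete set, not (as in your outline) to upgrade pseudocharacter to first countability.
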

\begin{proof}
By picking a countable subgroup containing an
appropriate witness we can assume that $H$ is separable. If $H$ is not
$\omega_1$-collapsible, there exists a closed normal subgroup
$N\subseteq H$ such that $\psi(H/N) = \omega_1 < {\frak b}$ and $H/N$
is separable, sequential and not metrizable. Since there are no
countable nonmetrizable Fr\'echet groups in $V[{\bf G}_{\omega_2}]$,
the group $H/N$ is not Fr\'echet. Thus $H/N$ contains a semiloose
$2$-scaffold. Corollary~\ref{cscaffs} implies that $H/N$ contains a
closed semiloose $2$-scaffold. But every closed semiloose $2$-scaffold
contains a closed copy of an infinite discrete space contradicting
countable compactness of $H/N$.
\end{proof}

Another standard argument shows that there is a club $C\subset S_1^2$
relative to $S_1^2$ such that for every $\alpha\in C$ the model
$V[{\bf G}_\alpha]$ contains $X\subseteq G$ as in Lemma~\ref{hlindelo}
such that $G$ is sequential, non Fr\'echet, and countably
compact. Lemma~\ref{hlindelo}, together with Theorem~\ref{hnpsi} imply
that such $G$ has a countable pseudocharacter. This leads to a
contradiction, just as in the proof of the lemma above.

To provide some motivation for our final question, recall that a
(countable) space $X$ is {\em analytic} if its topology (viewed as a
set of characteristic functions of open subsets of $X$ in $2^X$
endowed with the standard product opology) is a
continuous image of the irrationals. In \cite{todouzc} S.~Todor\v
cevi\'c and C.~Uzcat\'egui ask the following questions.

\begin{question}\label{soasg}What are the possible sequential orders 
of analytic sequential groups?
\end{question}

\begin{question}\label{ufpnhas}Is there an uncountable family of
  pairwise nonhomeomorphic analytic sequential spaces of sequential
  order $\omega_1$?
\end{question}

The results of this paper show that it is consistent with ZFC that the
only possible sequential orders of analytic sequential groups are $1$
and $\omega_1$ (a free topological group over a convergent sequence is
analytic and has a sequential order of $\omega_1$). Admittedly, such a
consistent answer is somewhat contrary to the spirit of viewing
the results about analytic spaces as `effective' versions of their general
counterparts but it does indicate the only possibility for a `true' ZFC
answer.

To answer~\ref{ufpnhas}, recall that a space is $k_\omega$ if it is a
quotient image of a countable sum of compact spaces. A direct
construction immediately shows that any countable $k_\omega$ (or
indeed, any countable space whose topology is dominated by countably
many first countable subspaces) is analytic (indeed, Borel). Now \cite{zelprot} shows
that there are exactly $\omega_1$ nonhomeomorphic $k_\omega$ countable
group topologies and \cite{shiba} proves that all such topologies
(other than the discrete) have sequential order $\omega_1$. Is this
the best possible result in this direction (at least for group
topologies)? 

As the results above indicate, the model constructed in this paper
tends to trivialize the structure of sequential groups. The final
question is about the classification of countable such groups. As the
answer to Question~\ref{ufpnhas} shows, it has some relevance to the
general structure of analytic sequential groups.

\begin{question}Is it consistent with ZFC that all countable
  sequential groups are $k_\omega$ or metrizable?  Analytic?
\end{question}


\begin{thebibliography}{99}
\bibitem{arh}A.~V.~Arhangel'skii, {\em Relations among the invariants
  of topological groups and their subspaces}, Russian Math.\ Surveys
  {\bf 35} (1980), pp.~1--23

\bibitem{arhtkac}A.~V.~Arhangel'skii and M.~Tkachenko, {\em
  Topological groups and related structures}, Atlantis Studies in
  Mathematics, vol.~I, Atlantis Press/World Scientific,
  Amsterdam-Paris 2008

\bibitem{dowbarman}D.~Barman and A.~Dow, {\em Proper forcing axiom and
  selective separability}, Topology Appl.\ {\bf 159} (2012),
  pp.~806--813

\bibitem{brendle}J.~Brendle and M.~Hru\v s\'ak, {\em Countable
  Fr\'echet Boolean groups: an independence result}, Journal of
  Symb.\ Logic {\bf 74} (2009), no.~3, pp.~1061--1067

\bibitem{dikranjan}D.~Dikranjan, D.~Impieri, and D.~Toller, {\em Metrizability of hereditarily normal
compact like groups}, Rend.\ Istit.\ Mat.\ Univ.\ Trieste {\bf 45}
  (2013), pp.~123--135

\bibitem{dpierone}S.~Dolecki and R.~Peirone, {\em Topological
  semigroups of every countable sequential order}, Recent Developments
  of General Topology and its Applications, Akademie-Verlag, 1992,
  pp.~80--84

\bibitem{dow}A.~Dow, {\em Two classes of Fr\'chet-Urysohn spaces},
  Proc.\ Amer.\ Math.\ Soc.\ {\bf 108} (1990), pp.~241--247

\bibitem{hrusak}M.~Hru\v s\'ak and U.A.~Ramos-Garc\'ia, {\em
  Malykhin's problem}, preprint

\bibitem{kannan}V.~Kannan, {\em Ordinal Invariants in Topology},
  Memoirs of the Amer.\ Math.\ Society {\bf 32} (1981) no.~245

\bibitem{kunen}K.~Kunen, {\em Set theory: An Introduction to
  Independence Proofs}, Vol.~{\bf 102} of Studies in Logic and the
  Foundations of Mathematics, North-Holland, Amsterdam 1980

\bibitem{jech}Thomas J.~Jech{\em Multiple Forcing}, Issue~{\bf 88} of
Cambridge Tracts in Mathematics, Cambridge University Press 1986

\bibitem{nyikos}P.~J.~Nyikos, {\em Metrizability and the
  Fr\'echet-Urysohn property in topological groups},
  Proc.\ Amer.\ Math.\ Soc.\ {\bf 83} (1981), pp.~793--801

\bibitem{noguratanaka}T.~Nogura and Y.~Tanaka, {\em Spaces which
  contain a copy of $S_\omega$ or $S_2$ and their applications},
  Topology Appl.\ {\bf 30} (1988) pp.~51--62

\bibitem{pierone}R.~Peirone, {\em Regular semitopological groups of
  every countable sequential order}, Topology Appl.\ {\bf 58} (1994),
  pp.~145--149

\bibitem{zelprot}I.~Protasov, E.~Zelenyuk, {\em Topologies on groups
  determined by sequences}, VNTL, Lviv 1999

\bibitem{sha}D.~B.~Shakhmatov, {\em Convergence in the presence of
  algebraic structure}, Recent progress in general topology, II,
  North-Holland, Amsterdam 2002, pp.463--484

\bibitem{sha1}D.~B.~Shakhmatov, {\em $\alpha_i$-properties in
  Fr\'echet-Urysohn topological groups}, Topology Proc.\ {\bf 15}
  1990, pp.~143--183

\bibitem{sh}A.~Shibakov, {\em Sequential group topology on rationals
with intermediate sequential order}, Proc.\ Amer.\ Math.\ Soc.\ {\bf
  124} (1996), pp.~2599--2607

\bibitem{shi}A.~Shibakov, {\em Sequential topological groups of any
  sequential order under CH}, Fund.\ Math.\ {\bf 155} (1998) no.~1,
  pp.~79--89

\bibitem{shiba}A.~Shibakov, {\em Metrizability of sequential topological groups
with point-countable $k$-networks}, Proc.\ Amer.\ Math.\ Soc.\ {\bf
  126} (1998), pp.~943--947

\bibitem{todouzc}S.~Todor\v cevi\'c and C.~Uzc\'ategui, {\em Analytic
  $k$-spaces}, Topology Appl.\ {\bf 146--147} (2005), pp.~511--526
\end{thebibliography}
\end{document}